\newtheorem{theorem}{Theorem}[section]
\newtheorem{definition}{Definition}[section]
\newtheorem{proposition}{Proposition}[section]
\newtheorem{lemma}{Lemma}[section]
\newtheorem{corollary}{Corollary}[section]
\newtheorem{remark}{Remark}[section]
\newenvironment{mytabular}{\bgroup\tiny\tabular}{\endtabular\egroup}
\newenvironment{mytabular1}{\bgroup\footnotesize\tabular}{\endtabular\egroup}
    \newcommand{\T}{\ensuremath{ \mathbb R^{n_1\times \cdots\times n_d}   }}
    \newcommand{\bigxiaokuohao}[1]{\ensuremath{ \left(  #1 \right) }}
    \newcommand{\bigzhongkuohao}[1]{\ensuremath{ \left[   #1 \right] }}      
        \newcommand{\bigfnorm}[1]{\ensuremath{ \left\|   #1 \right\|_F }}    
                \newcommand{\bignorm}[1]{\ensuremath{ \left\|   #1 \right\|  }}        
                \newcommand{\bigllbracket}[1]{\ensuremath{ \left\llbracket   #1 \right\rrbracket }}      
                                \newcommand{\innerprod}[2]{\ensuremath{ \left\langle   #1 , #2\right\rangle }}      
                                   \newcommand{\bigotimesu}{\ensuremath{     \bigotimes^d_{j=1} \mathbf u_{j,i  } }}    
                                   \newcommand{\bigotimesx}{\ensuremath{     \bigotimes^d_{j=1} \mathbf x_{j  } }}                                       
    \newcommand{\bigotimesy}{\ensuremath{     \bigotimes^d_{j=1} \mathbf y_{j  } }}  
       \newcommand{\gradAuji}{\ensuremath{       \mathcal A \bigotimes_{l\neq j}^d \mathbf u_{l,i}                                  }}
     \newcommand{\indicatorf}[2]{\ensuremath{  I_{C_{#1} }(#2)   } }          
          \newcommand{\deltak}[2]{\ensuremath{  \Delta^{#1,#2}   } }     
           \newcommand{\uomega}[1]{\ensuremath{  \mathbf u^{#1}_{j,i},\omega^{#1}_i   } }  
  \newcommand{\bigotimesuiterate}[1]{\ensuremath{     \bigotimes^d_{j=1} \mathbf u_{j,i  }^{#1} }}        
  \newcommand{\ball}[1]{\ensuremath{     \mathbb B_{#1}(\uomega{*})  }}
	\definecolor{darkgray}{rgb}{0.66, 0.66, 0.66}
\title{The Epsilon-Alternating Least Squares for Orthogonal Low-Rank Tensor Approximation and Its Global Convergence 
}
\author{Yuning Yang  \thanks{College of Mathematics and Information Science, Guangxi University, Nanning, 530004, China  (yyang@gxu.edu.cn).}                             
}
\begin{document} 
\maketitle

\begin{abstract}
 The epsilon alternating least squares ($\epsilon$-ALS) is developed and analyzed for canonical polyadic decomposition (approximation) of a higher-order tensor where one or more of the factor matrices are assumed to be columnwisely orthonormal. It is shown that the algorithm globally converges to a KKT point for all tensors without any assumption. For the   original ALS, by further studying the properties of the polar decomposition, we also establish its global convergence under a  reality assumption  not stronger than those in the literature. These results completely  address a question concerning the global convergence   raised in [L. Wang, M. T. Chu and B. Yu, \emph{SIAM J. Matrix Anal. Appl.}, 36 (2015), pp. 1--19].  In addition, an initialization procedure is proposed, which possesses a provable lower bound when the number of columnwisely orthonormal factors is one. Armed with this initialization procedure, numerical experiments show that the   $\epsilon$-ALS  exhibits a promising performance in terms of efficiency and effectiveness.
	
\noindent {\bf Keywords:} tensor; columnwisely orthonormal; global convergence; singular value; polar decomposition 
\end{abstract}

\noindent {\bf  AMS subject classifications.} 90C26, 15A18, 15A69, 41A50
\hspace{2mm}\vspace{3mm}

\pagestyle{myheadings} \thispagestyle{plain} \markboth{Y. YANG}{$\epsilon$-ALS and  Convergence}

\section{Introduction}
Given a $d$-th  order tensor $\mathcal A$, the canonical polyadic decomposition (CPD) consists of decomposing $\mathcal A$ into a sum of component rank-1 tensors \cite{kolda2010tensor,comon2014tensors,cichocki2015tensor,Sidiropoulos2016ten}. In reality the decomposition is rarely exact, whereas approximation models and algorithms based on optimization are needed. However, due to degeneracy issues, the optimization problem of approximating CPD might not attain its optimum \cite{kolda2010tensor}. To overcome this, constraints such as orthogonality are imposed \cite{kolda2001orthogonal}. Typically, this requires one or more of the latent factors to be columnwisely orthonormal. Imposing such constraints not only makes the model more stable  but also reflects   real-world applications. For example, extracting the commonalities of a set of images can be formulated as a third-order CPD with two factors having   orthonormal columns \cite{shashua2001linear}. Other applications involve   DS-CDMA systems \cite{sorensen2010parafac} and independent component analysis \cite{comon1994independent}.

The  most widely used algorithm for CPD might be the alternating least squares (ALS), which updates the latent factor matrices sequentially, and is regarded as a ``workhorse'' algorithm \cite{kolda2010tensor}.  ALS methods have been tailored under various circumstances, especially in the presence  of latent columnwisely orthonormal factors \cite{sorensen2012canonical,wang2015orthogonal,chen2009tensor}. For orthogonal Tucker decomposition, 
various algorithms have been developed; see, e.g., \cite{ishteva2013jacobi,li2018globally,savas2010quasi,de2000a}. 

Throughout this work, we denote $t~(1\leq t\leq d)$ as the number of latent factors having orthonormal columns.  \cite{wang2015orthogonal} established the global convergence\footnote{By \emph{global convergence}, we mean that started from any initializer, the whole sequence generated by an algorithm converges to a single limit point which is a KKT point. There is no guarantee that the KKT point is a global optimizer.} of ALS for \emph{almost all} tensors when $t=1$, and conjectured that the global convergence still holds when $t>1$. \cite{guan2019numerical} studied   this problem by considering simultaneous updating two vectors corresponding to two non-orthogonal factors at a time, and showed that if certain matrices constructed from every limit point admit simple leading singular values or have full column rank, then  the global convergence for \emph{almost all} tensors holds.

The   results of \cite{guan2019numerical} conditionally address the question raised in \cite{wang2015orthogonal}. In this work, we intend to   address these issues as completely as possible in the following two senses: 1) to prove the global convergence for \emph{all} tensors for any $1\leq t\leq d$; 2) to prove the global convergence   \emph{without any  assumption}. Towards these goals, we have obtained the following results in this work, where the analysis is mainly based on a study of the polar decomposition  in which some useful information might be ignored by the existing literature:

1) An $\epsilon$-ALS   is developed. At each iterate, $\epsilon$-ALS imposes a very small perturbation to ALS, which shares the same computational complexity as the (unperturbed) ALS.  The perturbation parameters are free to choose. We prove that started from any initializer, the   $\epsilon$-ALS  globally converges to a KKT point of the problem for any $1\leq t\leq d$ without any assumption. Thus one can very safely use $\epsilon$-ALS to solve the problem in question.

2) We also analyze the convergence of the (unperturbed) ALS. It is shown that if there exists a limit point, where certain matrices constructed from this limit point are of full column rank, then this limit point is a KKT point and global convergence still holds ($1\leq t\leq d)$. The assumption makes sense and is not stronger than those used in the literature. To prove the convergence, some new ideas are developed, which can be regarded as a complement to   the existing frameworks of proving convergence and might be of independent interest.

3) In addition, an initialization procedure is proposed, by merging the   HOSVD \cite{de2000on} and tensor best rank-1 approximation \cite{he2010approximation}. A provable lower  bound is established when $t=1$. Armed with this procedure,   experimental results show some promising observations of the   $\epsilon$-ALS. 
 
To see the convergence results of the aforementioned algorithms more clearly, we briefly summarize them in Table \ref{tab:convergence_results}. Here `(a.a.) $  \checkmark$' means global convergence for almost all tensors, while `-' stands for there is no any assumption.
\begin{table}[htbp]
			\renewcommand\arraystretch{0.7}
	\centering
	\caption{\label{tab:convergence_results}Convergence results and assumptions of different methods, details discussed in Sect. \ref{sec:global_conv_results}.}
	\begin{mytabular}{cccccc}
		\toprule
		 &   $\epsilon$-ALS &  (unperturbed) ALS & \cite{chen2009tensor} & \cite{wang2015orthogonal} & \cite{guan2019numerical}   \\
		 &   {Alg. \ref{alg:main}, $ \small\epsilon_i>0$, $i=1,2$} &    {Alg. \ref{alg:main}, $ \epsilon_i=0$, $i=1,2$} & & & \\
		\midrule
	Problem  \eqref{prob:orth_main}  &   $1\leq t\leq d$  &  $1\leq t\leq d$ &  $ t=d$    &
  $t=1$ &   $1\leq t\leq d$   \\
		\midrule 
		Global  Convergence   & $\checkmark$  & $\checkmark$ & $\checkmark$  & (a.a.) $\checkmark$& (a.a.) $\checkmark$   \\
		\midrule
		Assumption & -  &  $\checkmark$   &  $\checkmark$   & -    &  $\checkmark$       \\
		\bottomrule
	\end{mytabular}%
	\label{tab:effect_tau3}%
\end{table}%

The rest of the paper is organized as follows. Sect. \ref{sec:problem} describes the problem under consideration and discuss some properties, with the   $\epsilon$-ALS given in Sect. \ref{sec:alg}. The convergence results and global convergence proofs are respectively presented in Sect. \ref{sec:convergence} and \ref{sec:Proof_global_covergence}. Numerical results are illustrated in Sect. \ref{sec:numer}. Sect. \ref{sec:conclusions} draws some conclusions.

%

\section{Problem Statement} \label{sec:problem}

Given a $d$-th order tensor $\mathcal A\in \T$, the CPD is to decompose $\mathcal A$ into a sum of rank-1 tensors:
\begin{equation}\label{prob:cpd}			      \setlength\abovedisplayskip{2pt}
\setlength\abovedisplayshortskip{2pt}
\setlength\belowdisplayskip{2pt}
\setlength\belowdisplayshortskip{2pt}
\mathcal A \approx  \sum^R_{i=1}\nolimits\sigma_i \mathcal T_i,~~{\rm with}~~		      \setlength\abovedisplayskip{2pt}
\setlength\abovedisplayshortskip{2pt}
\setlength\belowdisplayskip{2pt}
\setlength\belowdisplayshortskip{2pt}
\mathcal T_i:=\bigotimes^d_{j=1}\nolimits \mathbf u_{j,i  } := \mathbf u_{1,i}\otimes\cdots\otimes \mathbf u_{d,i} \in \T
\end{equation}
denoting the rank-1 terms, given by the outer product of $\mathbf u_{j,i}  \in\mathbb R^{n_j}  $, $1\leq j\leq d$; $\sigma_i$'s are real scalars; $R>0$ is a given integer.  Throughout this work, we denote matrix $U_j$ by stacking the $j$-th factor of the tensor  $\bigotimes^d_{j=1} \mathbf u_{j,i  }$,  $1\leq i\leq R$   together, namely,
\[			      \setlength\abovedisplayskip{2pt}
\setlength\abovedisplayshortskip{2pt}
\setlength\belowdisplayskip{2pt}
\setlength\belowdisplayshortskip{2pt}
U_j:= \bigzhongkuohao{ \mathbf u_{j,1},\ldots, \mathbf u_{j,R}} \in\mathbb R^{n_j\times R}.
\]

Problem \eqref{prob:cpd} can be formulated as minimizing the following   objective function:
\begin{equation}			      \setlength\abovedisplayskip{2pt}
\setlength\abovedisplayshortskip{2pt}
\setlength\belowdisplayskip{2pt}
\setlength\belowdisplayshortskip{2pt}
\label{prob:obj}
F(\mathbf u_{j,i},\sigma_i)=F(U_j,\sigma_i):=\frac{1}{2}\bigfnorm{\mathcal A- \sum^R_{i=1} \nolimits\sigma_i \mathcal T_i }^2:= \frac{1}{2}\bigfnorm{ \mathcal A -  \bigllbracket{ \boldsymbol{\sigma}; U_1,\ldots,U_d }      }^2,
\end{equation}
where $\|\cdot\|_F$ denotes the Frobenius norm of a tensor induced by the usual inner product between two tensors;   the last equation follows the notations of \cite{kolda2010tensor}; here   $\boldsymbol{\sigma}:= [ \sigma_1,\ldots,\sigma_R ]^\top$. Throughout this paper, we respectively denote $(\mathbf u_{j,i})$ and $(U_j)$ as the tuples of $\mathbf u_{j,i}$ and $U_j$, $1\leq j\leq d$, $1\leq i\leq R$. 

In this work, we consider the situation that one or more $U_j$'s are columnwisely orthonormal; without loss of generality, we assume that the last $t$ $U_j$'s are columnwisely orthogonal, namely, we assume that
\[			       \setlength\abovedisplayskip{3pt}
\setlength\abovedisplayshortskip{3pt}
\setlength\belowdisplayskip{3pt}
\setlength\belowdisplayshortskip{3pt}
U_j^\top U_j = I,~d-t+1 \leq j\leq d,~ 1\leq t\leq d,
\]
where $I\in\mathbb R^{R\times R}$ denotes the identity matrix. Under the above constraints,   $\mathbf u_{j,i}$, $d-t+1\leq j\leq d, 1\leq i\leq R$ are bounded. To overcome the scaling ambiguity of the first $(d-t)$ $U_j$, by noticing the scalars $\sigma_i$, we can without loss of generality normalize each column $\mathbf u_{j,i}$, such that $ \mathbf u_{j,i}^\top \mathbf u_{j,i}=1$, $ 1\leq j\leq d-t$, $ 1\leq i \leq R$. 
With the above notations, the problem we consider in this paper is formulated as the following optimization problem
 \begin{equation} \label{prob:orth_main}
 \setlength\abovedisplayskip{4pt}
 \setlength\abovedisplayshortskip{4pt}
 \setlength\belowdisplayskip{4pt}
 \setlength\belowdisplayshortskip{4pt}
 \begin{split}
 &\min~ F(\mathbf u_{j,i}, \sigma_i  )\\
 &~    {\rm s.t.}~  \mathbf u_{j,i}^\top \mathbf u_{j,i} =1, 1\leq j\leq d-t ,  1\leq i \leq R,\\
 & ~~~~~~ U_j^\top U_j = I,  d-t+1\leq j\leq d.
 \end{split}
 \end{equation}
 Clearly, the problem is meaningful only if $n_j\geq R$, $ d-t+1\leq j\leq d$.
\paragraph{Uniqueness of exact decomposition} Before diving further into the model, we first shortly discuss the essential uniqueness of decomposition of $\mathcal A$, that is, provided that  $\mathcal A = \llbracket \boldsymbol{\sigma}; U_1,\ldots,U_d\rrbracket$,   when  the solution to \eqref{prob:orth_main} can be unique. Here  by uniqueness, we mean that $\mathcal A = \sum^R_{i=1}\sigma_i\bigotimesu$ is the only possible way of combination of rank-1 terms, except for the elementary indeterminacy of permutation, namely, $( \mathbf u_{1,1},\ldots,\mathbf u_{d,R},\sigma_1,\ldots,\sigma_R)$ is the only solution to \eqref{prob:orth_main} except for permutation. In general, the well-known sufficient conditions for the uniqueness of decomposition of a third-order tensor are due to Kruskal \cite{kruskal1977three}, which have been generalized to tensors of higher-order ($d\geq 3$) \cite{sidiropoulos2000uniqueness}:
\begin{equation}			       \setlength\abovedisplayskip{3pt}
\setlength\abovedisplayshortskip{3pt}
\setlength\belowdisplayskip{3pt}
\setlength\belowdisplayshortskip{3pt}
\label{eq:uniqueness_k_rank_order_d}
\sum^d_{j=1}\nolimits \boldsymbol{k}_{U_j} \geq 2R+ (d-1),
\end{equation}
where $\boldsymbol{k}_{U}$ denotes the Kruskal rank of a matrix $U$, i.e., the maximum value $k$ such that any $k$ columns of $U$ are linearly independent. 
Based on \eqref{eq:uniqueness_k_rank_order_d} we have
\begin{proposition}
	\label{prop:uniqueness_determined}
	Given that $\mathcal A = \llbracket \boldsymbol{\sigma};U_1,\ldots, U_d\rrbracket$, $\sigma_i\neq 0$, $\mathbf u_{j,i}\neq 0$ for each $j$ and $i$, where $U_j$ $ d-t+1\leq j\leq d$ are columnwisely orthonormal. Assume that $R\geq 2$.   
\begin{enumerate}
	\item If $t=1$ and $\sum^{d-1}_{j=1}\boldsymbol{k}_{U_j} \geq R + d-1$, or
	\item if $t=2$ and there exists a $ \bar j\leq d-t$ such that $\boldsymbol{k}_{U_{\bar j}}\geq 2$, or
	\item if $3\leq t\leq d$, 
\end{enumerate}
then
	  the decomposition is unique.  
\end{proposition}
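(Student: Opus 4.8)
The plan is to reduce all three cases to verifying the higher-order Kruskal--Sidiropoulos uniqueness condition \eqref{eq:uniqueness_k_rank_order_d}, that is, $\sum_{j=1}^d \boldsymbol{k}_{U_j} \geq 2R + (d-1)$. The crucial structural observation is that each columnwisely orthonormal factor has maximal Kruskal rank: if $U_j^\top U_j = I$, then its $R$ columns are orthonormal, hence linearly independent, so $\boldsymbol{k}_{U_j} = R$ for $d-t+1 \leq j\leq d$. Moreover, since every $\mathbf u_{j,i}\neq 0$ by hypothesis, any single column is linearly independent, whence $\boldsymbol{k}_{U_j}\geq 1$ for every $j$. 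These two facts supply all the lower bounds the counting argument needs.

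First I would dispose of Case 3. Summing the $t$ orthonormal contributions (worth $R$ each) and the trivial bound ($\geq 1$) for the remaining $d-t$ factors gives
\[
\sum_{j=1}^d \boldsymbol{k}_{U_j} \geq tR + (d-t).
\]
It then suffices to check $tR + (d-t) \geq 2R + (d-1)$, i.e. $(t-2)R \geq t-1$. Using $t\geq 3$ and $R\geq 2$ one has $(t-2)R \geq 2(t-2) = 2t-4 \geq t-1$, the last inequality being equivalent to $t\geq 3$; hence \eqref{eq:uniqueness_k_rank_order_d} holds and the decomposition is unique.

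Next I would handle Case 2 ($t=2$). Here the two orthonormal factors contribute $2R$, the distinguished index $\bar j\leq d-2$ contributes $\boldsymbol{k}_{U_{\bar j}}\geq 2$ by assumption, and the remaining $d-3$ factors contribute $\geq 1$ each, yielding
\[
\sum_{j=1}^d \boldsymbol{k}_{U_j} \geq 2R + 2 + (d-3) = 2R + (d-1),
\]
which meets \eqref{eq:uniqueness_k_rank_order_d} with equality. Case 1 ($t=1$) is then immediate: the single orthonormal factor gives $\boldsymbol{k}_{U_d}=R$, and adding the standing hypothesis $\sum_{j=1}^{d-1}\boldsymbol{k}_{U_j}\geq R+d-1$ produces $\sum_{j=1}^d \boldsymbol{k}_{U_j}\geq 2R+(d-1)$ at once.

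There is no serious obstacle here: the whole argument is one structural fact (orthonormal columns force full Kruskal rank $R$) combined with elementary counting. The only point requiring a moment's care is the arithmetic in Case 3, where $(t-2)R\geq t-1$ must be verified uniformly for $t\geq 3$; this is precisely where the standing assumption $R\geq 2$ is used, and it is what pins down the threshold $t=3$. Indeed, for $t=2$ the trivial bounds alone give only $2R+(d-2)$, falling one short of \eqref{eq:uniqueness_k_rank_order_d}, which is exactly why Case 2 must carry the extra hypothesis $\boldsymbol{k}_{U_{\bar j}}\geq 2$; and for $t=1$ the trivial bounds leave a deficit of $R$, explaining the explicit summation hypothesis imposed in Case 1.
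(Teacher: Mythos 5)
Your proof is correct and follows essentially the same route as the paper's: both reduce each case to the Kruskal--Sidiropoulos condition \eqref{eq:uniqueness_k_rank_order_d} by noting that columnwise orthonormality forces $\boldsymbol{k}_{U_j}=R$ and that nonzero columns give $\boldsymbol{k}_{U_j}\geq 1$, then verify the count case by case (with the same use of $R\geq 2$ when $t\geq 3$). No gaps.
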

\begin{proof} It follows from the columnwise  orthonormality of $U_j$, $d-t+1\leq j\leq d$  that $\boldsymbol{k}_{U_j} = R$, $d-t+1\leq j\leq d$, and so
		\begin{equation*}			      \setlength\abovedisplayskip{3pt}
		\setlength\abovedisplayshortskip{3pt}
		\setlength\belowdisplayskip{3pt}
		\setlength\belowdisplayshortskip{3pt}
		\sum^d_{j=1}\nolimits\boldsymbol{k}_{U_j} - 2R - (d-1)  =    \sum^{d-t}_{j=1}\nolimits \boldsymbol{k}_{U_j} + tR - 2R - (d-1) .
	\end{equation*}
	The $t=1$ case directly follows. When $t=2$,  
 $\mathbf u_{j,i}\neq 0$ tells us that $\boldsymbol{k}_{U_j}\geq 1$ for each $j$, which gives
 \[			       \setlength\abovedisplayskip{3pt}
 \setlength\abovedisplayshortskip{3pt}
 \setlength\belowdisplayskip{3pt}
 \setlength\belowdisplayshortskip{3pt}
\sum^{d-t}_{j=1}\nolimits \boldsymbol{k}_{U_j} + tR - 2R - (d-1)
\geq d-3 + 2 +2 R - 2R - (d-1) = 0. 
 \]
    When $t\geq 3$, it follows from $R\geq 2$ that
    \[			       \setlength\abovedisplayskip{3pt}
    \setlength\abovedisplayshortskip{3pt}
    \setlength\belowdisplayskip{3pt}
    \setlength\belowdisplayshortskip{3pt}
 \sum^{d-t}_{j=1}\nolimits \boldsymbol{k}_{U_j} + tR - 2R - (d-1)   \geq  (d-t) + tR-2R - (d-1) \geq 0.
    \]
Thus the uniqueness of decomposition follows from \eqref{eq:uniqueness_k_rank_order_d}.
	\end{proof}

The above results are based on deterministic conditions, which show that the decomposition is more likely to be unique   as $t$ turns to be larger. In particular, the uniqueness property must hold when $t\geq 3$.
 In the following we present a simple generic condition guaranteeing the uniqueness when $t=1,2$. Recall that a tensor decomposition is called generic if it holds with probability one if every entry of the factor matrices are drawn from absolutely continuous probability density functions \cite{sorensen2012canonical}. For a generic matrix $U\in\mathbb R^{n\times R}$, it   holds that $\boldsymbol{k}_U ={\rm rank}(U)= \min\{n,R  \}$. The following can be derived.
\begin{corollary}
	\label{col:uniqueness_generic}
	Under the setting of Proposition \ref{prop:uniqueness_determined}, assume in addition that $U_j$ are generic and $n_j\geq 2$, $1\leq j\leq d-t$.  
	\begin{enumerate}
		\item If $t=1$ and $d\geq R+1 $, or
		\item if $t=2$ and $d\geq 5$, or
		\item if $t=1$ or $2$ and $n_j\geq R$, $1\leq j\leq d-t$, 
	\end{enumerate}
then the decomposition is generically unique.
	\end{corollary}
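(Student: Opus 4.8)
The plan is to read Corollary \ref{col:uniqueness_generic} off Proposition \ref{prop:uniqueness_determined} by inserting the generic values of the Kruskal ranks. The single extra ingredient is the fact quoted just before the statement: for a generic matrix $U\in\mathbb R^{n\times R}$ one has $\boldsymbol{k}_U=\mathrm{rank}(U)=\min\{n,R\}$. Concretely, $\boldsymbol{k}_U<\min\{n,R\}$ forces some maximal minor of $U$ to vanish, which confines $U$ to a proper algebraic subvariety of measure zero; hence $\boldsymbol{k}_{U_j}=\min\{n_j,R\}$ holds with probability one once the entries are drawn from absolutely continuous densities. Because the standing hypotheses $\sigma_i\neq 0$ and $\mathbf u_{j,i}\neq 0$ of Proposition \ref{prop:uniqueness_determined} likewise fail only on measure-zero sets, it suffices to verify the deterministic conditions of that proposition with each $\boldsymbol{k}_{U_j}$ ($1\le j\le d-t$) replaced by $\min\{n_j,R\}$, and to note that the finite union of the exceptional sets is still negligible.

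I would then dispatch the three cases by elementary arithmetic. Throughout, $n_j\ge 2$ and $R\ge 2$ give $\min\{n_j,R\}\ge 2$ for every free factor. In case 1 ($t=1$) this yields $\sum_{j=1}^{d-1}\boldsymbol{k}_{U_j}\ge 2(d-1)$, while the first hypothesis of Proposition \ref{prop:uniqueness_determined} demands $\sum_{j=1}^{d-1}\boldsymbol{k}_{U_j}\ge R+d-1$; since $2(d-1)\ge R+d-1\Leftrightarrow d\ge R+1$, the assumed bound closes the case. In case 2 ($t=2$) the second hypothesis only requires one index $\bar j\le d-2$ with $\boldsymbol{k}_{U_{\bar j}}\ge 2$; such an index exists whenever $d\ge 3$, and the generic value $\min\{n_{\bar j},R\}\ge 2$ then supplies the bound, so $d\ge 5$ (in fact any $d\ge 3$) is more than enough. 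In case 3 the stronger assumption $n_j\ge R$ upgrades every free factor to $\boldsymbol{k}_{U_j}=R$; since the orthonormal factors already have Kruskal rank $R$, every factor has full Kruskal rank and Kruskal's condition \eqref{eq:uniqueness_k_rank_order_d} becomes $dR\ge 2R+(d-1)$, i.e. $R(d-2)\ge d-1$, which holds for all $R\ge 2$ in the higher-order regime $d\ge 3$, simultaneously covering $t=1$ and $t=2$.

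No real obstacle arises, since the mathematical content is a substitution of generic Kruskal ranks followed by a one-line inequality in each branch. The only point I would treat with care is the probabilistic bookkeeping: I would state explicitly that rank-deficiency of any $U_j$, as well as vanishing of any $\sigma_i$ or $\mathbf u_{j,i}$, each occur on a measure-zero set, so their union is negligible and on its complement all hypotheses of Proposition \ref{prop:uniqueness_determined} hold at once, giving uniqueness with probability one. It is also worth flagging that the displayed thresholds on $d$ are only sufficient; the genuinely binding requirement is that the generic ranks of the $d-t$ free factors satisfy $\sum_{j=1}^{d-t}\min\{n_j,R\}\ge (2-t)R+(d-1)$, of which the three listed cases are convenient specializations.
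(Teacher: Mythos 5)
Your proposal is correct and follows essentially the same route as the paper: substitute the generic Kruskal ranks $\boldsymbol{k}_{U_j}=\min\{n_j,R\}\geq 2$ and verify the Kruskal-type inequality by elementary arithmetic in each of the three cases (the paper plugs directly into \eqref{eq:uniqueness_k_rank_order_d} rather than into the itemized conditions of Proposition \ref{prop:uniqueness_determined}, but these are equivalent). Your added measure-zero bookkeeping and the observation that $d\geq 3$ already suffices in the $t=2$ case are both sound refinements of what the paper leaves implicit.
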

\begin{proof} Under the assumptions, note that $\boldsymbol{k}_{U_j}=\min\{n_j,R \}\geq 2$;   when $t=1$, 
	\[    \setlength\abovedisplayskip{3pt}
	\setlength\abovedisplayshortskip{3pt}
	\setlength\belowdisplayskip{3pt}
	\setlength\belowdisplayshortskip{3pt}
		\sum^d_{j=1}\nolimits\boldsymbol{k}_{U_j} - 2R - (d-1) \geq 2(d-1) +  R - 2R-(d-1) \geq 0.
	\]
	When $t=2$,
	\[    \setlength\abovedisplayskip{3pt}
	\setlength\abovedisplayshortskip{3pt}
	\setlength\belowdisplayskip{3pt}
	\setlength\belowdisplayshortskip{3pt}
\sum^d_{j=1}\nolimits\boldsymbol{k}_{U_j} - 2R - (d-1) \geq 2(d-2) +  2R - 2R-(d-1) \geq 0.
\]	
In particular, when $n_j\geq R$, $\boldsymbol{k}_{U_j}\geq R$, and the results follow naturally.
	\end{proof}

 \paragraph{Equivalent formulation and  attainability of the optimum}
 Due to  that one or more $U_j$ are columnwisely orthonormal and the normality of $\mathbf u_{j,i}$,   the tensors $\mathcal T_i$'s are   orthonormal as well,   i.e.,
 \[			      \setlength\abovedisplayskip{3pt}
 \setlength\abovedisplayshortskip{3pt}
 \setlength\belowdisplayskip{3pt}
 \setlength\belowdisplayshortskip{3pt}
 \innerprod{\mathcal T_{i_1}}{\mathcal T_{i_2}} = \delta_{i_1,i_2}, 1\leq i_1,i_2\leq R,
 \]
 where $\delta_{{i_1},{i_2}}$ represents the Kronecker delta. With this property, similar to  \cite{chen2009tensor,wang2015orthogonal}, it can be checked that \eqref{prob:orth_main} is equivalent to the maximization problem as follows
 \begin{eqnarray}			      \setlength\abovedisplayskip{3pt}
 \setlength\abovedisplayshortskip{3pt}
 \setlength\belowdisplayskip{3pt}
 \setlength\belowdisplayshortskip{3pt}
 \label{prob:ortho_main_max}
&&\max~ G(\mathbf u_{j,i}) = G(\mathbf u_{j,i},\sigma_i):= \sum^R_{i=1}\nolimits \sigma_i^2 \nonumber\\
&&~    {\rm s.t.}~  \sigma_i = \innerprod{\mathcal A}{\bigotimesu},\\
&& ~~~~~~  \mathbf u_{j,i}^\top \mathbf u_{j,i} =1,  1\leq j\leq d-t,  1\leq i\leq R,~U_j^\top U_j = I,  d-t+1\leq j\leq d.\nonumber
 \end{eqnarray}
 Since the constraints are bounded and the objective function is continuous, the optimum can be achieved.
 
 \paragraph{KKT system} 
 The KKT system of \eqref{prob:ortho_main_max} takes a similar fashion as \cite{chen2009tensor,wang2015orthogonal}. First we denote the notation $\mathcal A \bigotimes_{l\neq j}^d \mathbf u_{l,i} \in\mathbb R^{n_j}$ as the gradient of $\innerprod{\mathcal A}{\bigotimes^d_{l=1}\mathbf u_{l,i}}$ with respect to $\mathbf u_{j,i}$. For notational convenience we also denote $\mathbf v_{j,i}:= \mathcal A \bigotimes_{l\neq j}^d \mathbf u_{l,i} $. 
 By introducing dual variables $\eta_{j,i}\in\mathbb R$, $ 1\leq j\leq d-t$, $ 1\leq i\leq R$, and $\Lambda_j\in\mathbb R^{R\times R}$, $ d-t+1\leq j\leq d$, where $\Lambda_j$'s are symmetric matrices, we denote the Lagrangian function of the above problem as
 \begin{equation*} \setlength\abovedisplayskip{3pt}
 \setlength\abovedisplayshortskip{3pt}
 \setlength\belowdisplayskip{3pt}
 \setlength\belowdisplayshortskip{3pt}
 \label{prob:lagrangian_main_max}
 L( \mathbf u_{j,i}) = \sum^R_{i=1}\nolimits\sigma_i^2 - \sum_{j,i=1}^{d-t,R}\nolimits \eta_{j.i}\bigxiaokuohao{ \mathbf u_{j,i}^\top \mathbf u_{j,i} -1} - \sum^{d}_{j=d-t+1}\nolimits\innerprod{\Lambda_j}{ U_j^\top U_j - I}.
 \end{equation*}
 Taking derivative with respect to each $\mathbf u_{j,i}$  and setting it to zero yields the following KKT system
 \begin{small}
 \begin{equation}
 \label{prob:kkt_main_max_tmp} \setlength\abovedisplayskip{3pt}
 \setlength\abovedisplayshortskip{3pt}
 \setlength\belowdisplayskip{3pt}
 \setlength\belowdisplayshortskip{3pt}
 \left\{\begin{array}{ll}{\sigma_i \gradAuji =   \eta_{j,i} \mathbf u_{j,i}   }, & j=1,\ldots,d-t,~i=1,\ldots,R, \\ 
   \mathbf u_{j,i}^\top \mathbf u_{j,i} =1, &j=1,\ldots,d-t, i=1,\ldots,R,\\
 \sigma_i \gradAuji =   \sum^R_{r=1}  (\Lambda_j)_{i,r} \mathbf u_{j,r },& j=d-t+1,\ldots,d,~i = 1,\ldots,R,\\
   U_j^\top U_j = I, &j=d-t+1,\ldots,d,\\
   \sigma_i\in\mathbb R,~ \Lambda_j  \in\mathbb R^{R\times R},
 \end{array}\right.
 \end{equation}
 \end{small}
in which the third equality uses the symmetry of $\Lambda_j$, where   $(\Lambda_j)_{i,r}$ denotes the $(i,r)$-th entry of $\Lambda_j$. Recalling   $\mathbf v_{j,i}=\mathcal A \bigotimes_{l\neq j}^d \mathbf u_{l,i}$,   $\innerprod{\mathbf v_{j,i}}{\mathbf u_{j,i}} = \innerprod{\mathcal A}{\bigotimesu}$, it holds that $\eta_{j,i} = \sigma_i^2$ for each $i$ and $ 1\leq j\leq d-t$, and $   (\Lambda_j)_{i,i} = \sigma_i^2$ for each $i$ and $j=d-t+1,\ldots,d$. On the other hand, $(\Lambda_j)_{i,r} = \sigma_i\innerprod{\mathbf v_{j,i}}{\mathbf u_{j,r}}$ for $r\neq i$. Putting these back to \eqref{prob:kkt_main_max_tmp} gives
\begin{small}
  \begin{equation}
 \label{prob:kkt_main_max} \setlength\abovedisplayskip{3pt}
 \setlength\abovedisplayshortskip{3pt}
 \setlength\belowdisplayskip{3pt}
 \setlength\belowdisplayshortskip{3pt}
 \left\{\begin{array}{ll}{  \gradAuji =   \sigma_i \mathbf u_{j,i}   }, & j=1,\ldots,d-t,~i=1,\ldots,R, \\
  \mathbf u_{j,i}^\top \mathbf u_{j,i} =1, &j=1,\ldots,d-t, i=1,\ldots,R,\\ 
 \sigma_i \gradAuji =   \sum^R_{r\neq i}    (\Lambda_j)_{i,r} \mathbf u_{j,r} + \sigma_i^2 \mathbf u_{j,i},& j=d-t+1,\ldots,d,~i = 1,\ldots,R,\\
 U_j^\top U_j = I, &j=d-t+1,\ldots,d,\\
    \sigma_i\in\mathbb R,~ \Lambda_j  \in\mathbb R^{R\times R}.
 \end{array}\right.
 \end{equation}
 \end{small}

\section{Algorithm}\label{sec:alg}

 \begin{algorithm}[h] 
 	\algsetup{linenosize=\tiny}
\footnotesize
 	\caption{The $\epsilon$-ALS for solving \eqref{prob:orth_main}	\label{alg:main}}
 	\begin{algorithmic}[1]
 		\REQUIRE $U_j^0 = [\ldots,\mathbf u_{j,i}^0,\ldots]$, $j=1,\ldots,d $, with $\|\mathbf u_{j,i}\|=1$, $1\leq j\leq d-t$, $1\leq i\leq R$; $(U^0)^\top U^0_j = I$, $d-t+1\leq j\leq d$; $\boldsymbol{ \omega}^0 = [\ldots,\omega^0_i,\ldots]^\top\in\mathbb R^R$,  $\omega_i^0 = \sigma^0_i/\sqrt{\sum^R_{i=1}(\sigma^0_i)^2  } $,  $\sigma_i = \innerprod{\mathcal A}{\bigotimesuiterate{0}}$; $\epsilon_1,\epsilon_2\geq 0$.
 		\FOR{$k=0,1,\ldots,$}
 			\FOR{$j = 1,2,\ldots,d-t$} 
 		    	\FOR{$i = 1,2,\ldots,R$}
 					\STATE  $\mathbf v_{j,i}^{k+1} =  {\mathcal A}{\mathbf u_{1,i}^{k+1}\otimes \cdots\otimes \mathbf u_{j-1,i}^{k+1} \otimes \mathbf u_{j+1,i}^k \otimes\cdots\otimes \mathbf u_{d,i}^k  }$ \color{gray}\% {lines 4 and 5 can be done simultaneously for all $i=1,\ldots,R$}\color{black}
 					\STATE $\mathbf u^{k+1}_{j,i} = \frac{ \tilde{\mathbf v}_{j,i}^{k+1}  }{ \|\tilde{\mathbf v}_{j,i}^{k+1} \| }$, where $\tilde{\mathbf v}^{k+1}_{j,i} = \mathbf v^{k+1}_{j,i}\cdot\omega^k_i + \color{black} \epsilon_1\cdot\mathbf u^k_{j,i}$ \color{black}
 				\ENDFOR
 			\ENDFOR{ {~~~~~~~~~~~~~~~~~~~~~~~~~~\color{gray}\% end of the updating of non-orthonormal constraints}}\color{black}
			\FOR{$j = d-t+1, \ldots,d $}
				\FOR{$i = 1,2,\ldots,R$}
					\STATE    $\mathbf v^{k+1}_{j,i}= {\mathcal A}{\mathbf u_{1,i}^{k+1}\otimes \cdots\otimes \mathbf u_{j-1,i}^{k+1} \otimes \mathbf u_{j+1,i}^k \otimes\cdots\otimes \mathbf u_{d,i}^k  }$ \color{gray}\% lines 10 and 11 can be done simultaneously  for all $i=1,\ldots,R$\color{black}
					\STATE $\tilde{\mathbf v}_{j,i}^{k+1} = \mathbf v^{k+1}_{j,i}\cdot\omega^k_i + \color{black}  \epsilon_2\cdot \mathbf u_{j,i}^k$ \color{black}
				\ENDFOR
								\STATE $\tilde V^{k+1}_j = [  \tilde{\mathbf v}_{j,1}^{k+1},\ldots,\tilde{\mathbf v}_{j,R}^{k+1}   ]$
				\STATE $[U^{k+1}_j,H^{k+1}_j] = {\rm polar\_decomp}(\tilde V^{k+1}_j)$~~~~~~~~~~~~\color{gray}\% polar decomposition of $V^{k+1}$
			\ENDFOR{~~~~~~~~~~~~~~~~~~~~~~~~~~\color{gray}\% end of the updating of orthonormal constraints}\color{black} \color{black}
			\STATE  $\sigma_i^{k+1} = \innerprod{\mathcal A}{\bigotimesuiterate{k+1}}$, $\boldsymbol{ \sigma}^{k+1}=[\sigma^{k+1}_1,\ldots,\sigma^{k+1}_R]^\top$, $\boldsymbol{ \omega}^{k+1} = \frac{ \boldsymbol{ \sigma}^{k+1}  }{ \bignorm{\boldsymbol{ \sigma}^{k+1}}  }$.
 		\ENDFOR
 	\end{algorithmic}
 \end{algorithm}
Algorithm \ref{alg:main}  is designed for     \eqref{prob:ortho_main_max}. Note that the supersceipt $(\cdot)^k$ means the $k$-th iterate, while the subscript $(\cdot)_{j,i}$ represents the $i$-th column of the $j$-th factor matrix, $1\leq i\leq R$, $1\leq j\leq d$; these notations are different from \cite{chen2009tensor,wang2015orthogonal}. ${\mathcal A}{\mathbf u_{1,i}^{k+1}\otimes \cdots\otimes \mathbf u_{j-1,i}^{k+1} \otimes \mathbf u_{j+1,i}^k \otimes\cdots\otimes \mathbf u_{d,i}^k  }$ represents the gradient of $\innerprod{\mathcal A}{\bigotimes^d_{l=1}\mathbf u_{l,i} }$ with respect to $\mathbf u_{j,i}$ at the point $(\mathbf u^{k+1}_{1,i},\ldots,\mathbf u^{k+1}_{j-1,i},\mathbf u^{k}_{j,i},\ldots,\mathbf u^k_{d,i}  )$.

The algorithm can be divided into two parts: lines 2--7 is devoted to the computation of the first $(d-t)$ $U_j$, while lines 8--15 amounts to finding the last $t$ factor matrices. lines 2--7 is similar to the scheme of the  higher-order power method \cite{delathauwer2000on} and \cite{wang2015orthogonal}, whereas in lines 8--15, after the updating of $\tilde V$, a polar decomposition is needed to ensure the columnwise orthonormality of $U_j$. The polar decomposition with its property will be studied in the next section. It can be easily seen that $\epsilon$-ALS shares the same computational complexity as ALS \cite{wang2015orthogonal}.

When $\epsilon_1,\epsilon_2=0$    and $t=1$, $\epsilon$-ALS is closely related to the ALS proposed in \cite{sorensen2012canonical,wang2015orthogonal}, except for the  slight differences on updating $\boldsymbol{\omega}$; when $t=d$, $\epsilon$-ALS is exactly the same as \cite[Algorithm 1]{chen2009tensor}.
 
 When $\epsilon_1,\epsilon_2>0$, this can be recognized as imposing shift terms, which is a well-known technique in the computation of matrix and tensor eigenvalues \cite{kolda2011shifted}. However, the difference is that $\epsilon$ here can be arbitrarily small, and numerical results indicate  that a sufficiently small   $\epsilon$  indeed exhibits  better performance. This is why we prefer to call the method $\epsilon$-ALS instead of shifted ALS.  Imposing $\epsilon$ terms can help in avoiding assumptions in   the convergence analysis, as will be studied later.
 


 \subsection{Initializer}
 In the following, we present a procedure to obtain an initializer for problem \eqref{prob:ortho_main_max}, trying to capture as much of dominant information from the data tensor $\mathcal A$ as possible.
 
  \begin{boxedminipage}{0.97\textwidth}\small
  	\begin{equation}  \label{proc:init}
\noindent {\rm Procedure}~ (\mathbf u_{1,1},\ldots,\mathbf u_{d,R}) =  {\rm get\_initializer}(\mathcal A) 
\end{equation}
 
 	1. For each $j=d-t+1,\ldots,d$, compute the left leading $R$ singular vectors of the unfolding matrix $A_{(j)}\in\mathbb R^{n_j\times \prod^{d}_{l\neq j} n_l}$, denoted as $(\mathbf u_{j,1},\ldots,\mathbf u_{j,R} )$.
 	
 	2. For each $i=1,\ldots,R$, compute a rank-1 approximation solution to the tensor $\mathcal A\times_{d-t+1,i} \mathbf u_{d-t+1}^\top\times \cdots \times_d\mathbf u_{d,i}^\top\in \mathbb R^{n_1\times \cdots\times n_{d-t}}$:
 	\[
 	(\mathbf u_{1,i},\ldots,\mathbf u_{d-t,i}) = {\rm rank1approx}( \mathcal A\times_{d-t+1} \mathbf u_{d-t+1}^\top\times \cdots \times_d\mathbf u_{d,i}^\top   )
 	\]
 	
 	3. Return $(\mathbf u_{1,1},\ldots, \mathbf u_{d,R})$.
 \end{boxedminipage}

Here for the definitions of unfolding $A_{(j)}$ and tensor-vector product $\mathcal A\times_j \mathbf u_j^\top \in \mathbb R^{n_1\times \cdots\times n_{j-1}\times n_{j+1}\times \cdots \times n_d}$, one can refer to \cite{kolda2010tensor}.

The first step of the above procedure is a part of the truncated HOSVD \cite{de2000a}, in that only $t$ columnwisely orthonormal matrices are computed by means of matrix SVD. Given $U_{d-t+1},\ldots, U_d$,  a resonable way to obtain  $U_1,\ldots, U_{d-t}$ is to maximize $G(\mathbf u_{j,i})$ of \eqref{prob:ortho_main_max} with respect to     $\mathbf u_{1,1},\ldots,\mathbf u_{{d-t},R}$, namely, 
\begin{eqnarray}
\label{prob:rank1approx}			      \setlength\abovedisplayskip{2pt}
\setlength\abovedisplayshortskip{2pt}
\setlength\belowdisplayskip{2pt}
\setlength\belowdisplayshortskip{2pt}
&&\max~   \sum^R_{i=1} \innerprod{\mathcal A}{\bigotimesu}^2 = \sum^R_{i=1} \innerprod{ \mathcal A\times_{d-t+1} \mathbf u_{d-t+1}^\top\times \cdots \times_d\mathbf u_{d,i}^\top  }{ \bigotimes^{d-t}_{j=1}\nolimits\mathbf u_{j,i}   }^2 \nonumber\\
&& ~~~~~~  \mathbf u_{j,i}^\top \mathbf u_{j,i} =1, j=1,\ldots,d-t, i=1,\ldots,R.
\end{eqnarray}
Since the objective function and the constraints    are decoupled with respect to each $\mathbf u_{j,i}$, it amounts to solving $R$ separate tensor best rank-1 approximation problems of size $n_1\times\cdots\times n_{d-t}$. This is the explanation of step 2. However, it is known that such a problem is NP-hard in general \cite{hillar2013most}. Nevertheless, approximation solution methods exist; see, e.g., \cite{de2000on,he2010approximation}. In the following we provide a procedure to compute a rank-1 approximation solution to a $\mathcal B\in\mathbb R^{n_1\times\cdots\times n_m}$ in the same spirit of \cite{he2010approximation} but with slightly better efficiency. The procedure is defined recursively, in which \texttt{reshape}$(\cdot)$ is the same as that in  Matlab.

  \begin{boxedminipage}{0.97\textwidth}\small
	\begin{equation}  \label{proc:rank1approx}
	\noindent {\rm Procedure}~ (\mathbf x_1,\ldots,\mathbf x_m) =  {\rm rank1approx}(\mathcal B) 
	\end{equation}
	
	1. If $m=1$, return $\mathbf x_1=\mathcal B/\bignorm{B}_F$.
	
	2. If $m=2$, return $(\mathbf x_1,\mathbf x_2)$ as the normalized leading singular vector pair of $\mathcal B\in\mathbb R^{n_1\times n_2}$.
	
	3. Reshape $\mathcal B$ as $B = $\texttt{reshape}$(\mathcal B,\prod^{m-2}_{j=1}n_j,n_{m-1}n_m ) \in \mathbb R^{\prod^{m-2}_{j=1}n_j \times n_{m-1}n_m}$. 
	
	Compute $(\mathbf x_{1,\ldots, m-2 },\mathbf x_{ m-1,m})\in\mathbb R^{\prod^{m-2}_{j=1}n_j }\times \mathbb R^{n_{m-1}n_m}$ as the normalized leading singular vector pair of the matrix $B$.
	
	4. Reshape $X_{ \mathbf xm-1,m}:=$\texttt{reshape}$(\mathbf x_{ m-1,m }, n_{m-1},n_m)\in\mathbb R^{n_{m-1}\times n_m}$; compute 
	
	$~~~~~~~~~~~~~~~~~~~~~~~~~~~~~~(\mathbf x_{m-1},\mathbf x_m)=$rank1approx($X_{m-1,m}$).
	
	5. Denote $\mathcal X_{1,\ldots,m-2}:=\mathcal B\times_{m-1}\mathbf x_{m-1}^\top\times_m\mathbf x_m^\top\in\mathbb R^{n_1\times\cdots\times n_{m-2}}$; compute 
	
	$~~~~~~~~~~~~~~~~~~~~~~~~~~~~~~(\mathbf x_1,\ldots,\mathbf x_{m-2})=$rank1approx($\mathcal X_{1,\ldots, m-2}$).

	6. Return $(\mathbf x_1,\ldots,\mathbf x_m)$.
\end{boxedminipage}

 To analyze the performance of Procedure \ref{proc:init}, the following property is useful.
\begin{proposition}\label{prop:rank1}			       \setlength\abovedisplayskip{3pt}
	\setlength\abovedisplayshortskip{3pt}
	\setlength\belowdisplayskip{3pt}
	\setlength\belowdisplayshortskip{3pt}
	Let $\mathcal B\in\mathbb R^{n_1\times\cdots\times n_m}$ with $m\geq 3$ and $n_1\leq \cdots\leq n_m$. Let $(\mathbf x_1,\ldots,\mathbf x_m)$ be generated by Procedure \ref{proc:rank1approx}. Then it holds that 	
	\begin{equation}\label{eq:rank1approx}\setlength\abovedisplayskip{4pt}
	\setlength\abovedisplayshortskip{4pt}
	\setlength\belowdisplayskip{4pt}
	\setlength\belowdisplayshortskip{4pt}
	\begin{split}
&	 \innerprod{\mathcal B}{\bigotimes^m_{j=1}\nolimits\mathbf x_j}  
	\geq  {\bignorm{   B  }_2}/\xi(m)  \\
&~~~~~~~~~~~~~~~~~~\,\geq \bignorm{B}_F/(\xi(m) \sqrt{n_{m-1}n_m} ),
	\end{split}
	\end{equation}
	where $B$ is defined in the procedure, $\xi(m) = \sqrt{ \prod^{m-1}_{j=1}n_j\cdot \prod^{m/2-2}_{j=1}n_{2j+1} \cdot n_2^{-1}  }$ if $m$ is even, and when $m$ is odd, $\xi(m) = \sqrt{\prod^{m-1}_{j=2}n_j\cdot\prod^{(m+1)/2-2}_{j=1}n_{2j}}$. 
	\end{proposition}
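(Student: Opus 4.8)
The plan is to establish the first inequality $\langle \mathcal B, \bigotimes_{j=1}^{m}\mathbf x_j\rangle \ge \|B\|_2/\xi(m)$ by induction on $m$ in steps of two, mirroring the recursive structure of Procedure \ref{proc:rank1approx}; the second inequality is then immediate, since $B\in\mathbb R^{\prod_{j=1}^{m-2}n_j\times n_{m-1}n_m}$ has only $n_{m-1}n_m$ columns, so $\operatorname{rank}(B)\le n_{m-1}n_m$ and hence $\|B\|_F^2\le \operatorname{rank}(B)\,\|B\|_2^2$ gives $\|B\|_2\ge \|B\|_F/\sqrt{n_{m-1}n_m}$. Throughout I will use that the Matlab-style \texttt{reshape} is an isometry for both the Euclidean/Frobenius norm and the inner product, and that the normalized leading singular vector pair $(\mathbf p,\mathbf q)$ of a $p\times q$ matrix $M$ satisfies $\mathbf p^\top M\mathbf q=\|M\|_2$, together with the elementary bound $\|M\|_2\ge \|M\|_F/\sqrt{\min(p,q)}$.

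Two structural facts drive the induction. First, writing $\mathbf w$ for the unit vectorization of $\mathbf x_{m-1}\otimes\mathbf x_m$ consistent with the reshape in Step 4, the contraction in Step 5 gives $\operatorname{vec}(\mathcal X_{1,\ldots,m-2})=B\mathbf w$, whence
\[
\langle \mathcal B, \textstyle\bigotimes_{j=1}^{m}\mathbf x_j\rangle=\langle \mathcal X_{1,\ldots,m-2},\textstyle\bigotimes_{j=1}^{m-2}\mathbf x_j\rangle ,
\]
so the $m$-dimensional problem collapses onto the $(m-2)$-dimensional tensor fed into the recursive call. Second, I will prove the key estimate $\|\mathcal X_{1,\ldots,m-2}\|_F=\|B\mathbf w\|\ge \|B\|_2/\sqrt{n_{m-1}}$, which is the analytic heart of the argument. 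Let $\mathbf v=\mathbf x_{m-1,m}$ and $\mathbf u$ be the leading right and left singular vectors of $B$, so $B\mathbf v=\|B\|_2\mathbf u$ and $\mathbf u^\top B=\|B\|_2\mathbf v^\top$. Reshaping $\mathbf v$ into $X_{m-1,m}\in\mathbb R^{n_{m-1}\times n_m}$ and taking its leading singular vectors yields $\langle\mathbf w,\mathbf v\rangle=\mathbf x_{m-1}^\top X_{m-1,m}\mathbf x_m=\|X_{m-1,m}\|_2\ge \|X_{m-1,m}\|_F/\sqrt{n_{m-1}}=1/\sqrt{n_{m-1}}$, using $n_{m-1}\le n_m$ and $\|X_{m-1,m}\|_F=\|\mathbf v\|=1$. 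Since $\mathbf u^\top B\mathbf w=\|B\|_2\langle\mathbf v,\mathbf w\rangle$ and $\|\mathbf u\|=1$, the bound $\|B\mathbf w\|\ge |\mathbf u^\top B\mathbf w|\ge \|B\|_2/\sqrt{n_{m-1}}$ follows.

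With these in hand the induction proceeds as follows. For the inductive step ($m\ge 5$) I apply the Proposition to $\mathcal X_{1,\ldots,m-2}$, whose reshape matrix has last two dimensions $n_{m-3},n_{m-2}$; combining its second (Frobenius) inequality with the two facts above gives
\[
\langle \mathcal B, \textstyle\bigotimes_{j=1}^{m}\mathbf x_j\rangle\ge \frac{\|\mathcal X_{1,\ldots,m-2}\|_F}{\xi(m-2)\sqrt{n_{m-3}n_{m-2}}}\ge \frac{\|B\|_2}{\xi(m-2)\sqrt{n_{m-1}n_{m-2}n_{m-3}}},
\]
so the claim reduces to the identity $\xi(m)=\xi(m-2)\sqrt{n_{m-1}n_{m-2}n_{m-3}}$, which I will verify directly from the two closed forms for $\xi$ (the factor $n_2^{-1}$ in the even case and the offset product indices are arranged exactly so that this telescoping holds in both parities). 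The base cases are $m=3$, where $\mathcal X_1$ is a vector and $\langle\mathcal B,\bigotimes_{j=1}^{3}\mathbf x_j\rangle=\|\mathcal X_1\|\ge\|B\|_2/\sqrt{n_2}=\|B\|_2/\xi(3)$, and $m=4$, where $\mathcal X_{1,2}$ is a matrix handled by the $m=2$ branch of the procedure, giving $\|\mathcal X_{1,2}\|_2\ge \|\mathcal X_{1,2}\|_F/\sqrt{n_1}\ge \|B\|_2/\sqrt{n_1 n_3}=\|B\|_2/\xi(4)$. I expect the main obstacle to be purely the bookkeeping: confirming that the recursion $\xi(m)=\xi(m-2)\sqrt{n_{m-1}n_{m-2}n_{m-3}}$ is exactly what the parity-dependent formulas for $\xi(m)$ encode, and that the reshape and vectorization conventions make $\operatorname{vec}(\mathcal X_{1,\ldots,m-2})=B\mathbf w$ hold on the nose; the genuine analytic content is confined to the single singular-value estimate of the second paragraph.
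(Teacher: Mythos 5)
Your proof is correct and follows the same basic strategy as the paper's: induction in steps of two driven by the identity $\langle\mathcal B,\bigotimes_{j=1}^m\mathbf x_j\rangle=\langle\mathcal X_{1,\ldots,m-2},\bigotimes_{j=1}^{m-2}\mathbf x_j\rangle$, the relation $B^\top\mathbf x_{1,\ldots,m-2}=\|B\|_2\,\mathbf x_{m-1,m}$ from the leading singular pair, and the estimate $\langle X_{m-1,m},\mathbf x_{m-1}\otimes\mathbf x_m\rangle=\|X_{m-1,m}\|_2\geq 1/\sqrt{n_{m-1}}$. Your isolated inequality $\|B\mathbf w\|\geq|\mathbf u^\top B\mathbf w|\geq\|B\|_2/\sqrt{n_{m-1}}$ is the same computation the paper performs via the variational characterization $\|\mathcal X_{1,\ldots,m-2}\|_F=\max_{\|\mathcal Y\|_F=1}\langle\mathcal X_{1,\ldots,m-2},\mathcal Y\rangle$ evaluated at the reshaped left singular vector, so there is no new analytic content there. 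The genuine differences are organizational, and they work in your favor: you treat both parities directly through the telescoping identity $\xi(m)=\xi(m-2)\sqrt{n_{m-1}n_{m-2}n_{m-3}}$ with base cases $m=3$ and $m=4$, whereas the paper reduces odd order to even by prepending a singleton mode and starts the induction at $p=1$ (order $2$). That choice of base case in the paper is actually slightly inconsistent with the stated formula for $\xi$: with $\xi(2)=\sqrt{n_1/n_2}$ the first induction step delivers the constant $\xi(2)\sqrt{n_1n_2n_3}=n_1\sqrt{n_3}$ rather than $\xi(4)=\sqrt{n_1n_3}$ (the recursion $\xi(2p+2)=\xi(2p)\sqrt{n_{2p-1}n_{2p}n_{2p+1}}$ only holds for $p\geq 2$), so the paper's chain at $m=4$ proves a weaker constant than claimed. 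Your direct verification of the $m=3$ and $m=4$ cases, together with the recursion checked for $m\geq 5$ in both parities, yields the stated constants cleanly; just make sure to carry out the bookkeeping you deferred, namely the reshape/vectorization identity $\operatorname{vec}(\mathcal X_{1,\ldots,m-2})=B\mathbf w$ and the two parity cases of the $\xi$ recursion, both of which do check out.
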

\begin{proof}
	Denote $\|\mathcal X\|_2:= \max_{\|\mathbf y_j\|=1  }\innerprod{\mathcal X}{\bigotimes^m_{j=1} \mathbf y_j}$ as the spectral norm of a tensor $\mathcal X$.
	Without loss of generality we only prove the case when  $m$ is even, because otherwise the tensor can be understood as in the space $\mathbb R^{1\times n_1\times\cdots\times n_m}$, whose order is again even. Write $m=2p$. We prove the claim inductively on $p$. When $p=1$, the claim clearly holds. Assume that it holds for $p>1$. When the order is $ 2(p+1)$, denote $\mathcal X_{1,\ldots,2p}=\mathcal B\times_{2p+1}\mathbf x_{2p+1}^\top\times_{2p+2}\mathbf x_{2p+2}^\top$ as that in the procedure, and the matrix $X_{1,\ldots,2p  }:=$\texttt{reshape}$(\mathcal X_{1,\ldots,2p},\prod^{2p-2}_{j=1}n_j, n_{2p-1}n_{2p}) \in\mathbb R^{\prod^{2p-2}_{j=1}n_j\times n_{2p-1}n_{2p}}$ accordingly; denote $\mathcal X_{\mathbf x1,\ldots,2p }:= $\texttt{reshape}$(\mathbf x_{1,\ldots,2p },n_1,\ldots,n_{2p }) \in\mathbb R^{n_1\times\cdots\times n_{2p }}$. On the other hand, from step 3 we see that $B^\top\mathbf x_{1,\ldots, 2p } = \bignorm{B}_2 \mathbf x_{2p+1 ,2p+2}$. We then have
	\begin{eqnarray}
	\innerprod{\mathcal B}{\bigotimes^{2p+2}_{j=1}\nolimits\mathbf x_{j}} &=& \innerprod{\mathcal X_{1,\ldots,2p}}{\bigotimes^{2p}_{j=1}\nolimits\mathbf x_j} \label{eq:prooff:1} \\
	&\geq & \bignorm{  X_{1,\ldots,2p}}_2/ \xi( 2p)     \geq \bignorm{   X_{1,\ldots, 2p}  }_F/(\xi( 2p) \sqrt{n_{2p-1}n_{2p}})  \label{eq:prooff:2} \\
	& =& \max_{ \bignorm{\mathcal Y}_F=1  } \innerprod{\mathcal X_{1,\ldots,2p}}{\mathcal Y} /(\xi(2p) \sqrt{n_{2p-1}n_{2p}} ) \label{eq:prooff:3}\nonumber\\
	&  \geq&  \innerprod{\mathcal X_{1,\ldots,2p}}{ \mathcal X_{\mathbf x1,\ldots,2p} }/({\xi(2p)}\sqrt{n_{2p-1}n_{2p}}  )   \label{eq:prooff:4}\\
	&=& \bignorm{B}_2 \innerprod{X_{\mathbf x2p+1,2p+2}}{\mathbf x_{2p+1}\otimes\mathbf x_{2p+2}}/({\xi(2p)}\sqrt{n_{2p-1}n_{2p}}  )   \label{eq:prooff:5}\\
	&\geq& \bignorm{B}_2 /( \xi(2p) \sqrt{n_{2p-1}n_{2p}n_{2p+1}} ) = \bignorm{B}_2/\xi(2p+2), \label{eq:prooff:6} 
	\end{eqnarray}
	where \eqref{eq:prooff:1} uses the definition of $\mathcal X_{1,\ldots,2p}$; the first inequality of \eqref{eq:prooff:2} is based on the induction, while the second one relies on the relation between matrix spectral norm and Frobenius norm; \eqref{eq:prooff:4} follows from the definition of $\mathcal X_{\mathbf x 1,\ldots,2p}$ and that $\|\mathbf x_{1,\ldots,2p}\|=1$; \eqref{eq:prooff:5} is due to   
%
  step 3 of the procedure, and the definition of $X_{\mathbf x2p+1,2p+2}$ in step 4; \eqref{eq:prooff:6} comes from step 4 and that $\|X_{\mathbf x2p+1,2p+2}\|_F=1$, and the definition of $\xi(2p)$. The second inequality of \eqref{eq:rank1approx} again follows from the relation between matrix spectral and Frobenius norms. This completes the proof.
	\end{proof}

When $t=1$, we present a provable lower bound for the initializer generated by Procedure \ref{proc:init}. 
Assume that the singular values of the unfolding matrix $A_{(d)}$ satisfy $\lambda_1\geq \cdots\geq \lambda_R\geq \cdots$ with $\lambda_R>0$.
\begin{proposition}
	\label{prop:initializer}
	Let $(\mathbf u_{j,i})$ be generated by Procedure \ref{proc:init}. Then it holds that
\begin{equation}\label{eq:init}  \setlength\abovedisplayskip{3pt}
\setlength\abovedisplayshortskip{3pt}
\setlength\belowdisplayskip{3pt}
\setlength\belowdisplayshortskip{3pt}
G(\mathbf u_{j,i})	=\sum^R_{i=1}\nolimits \innerprod{\mathcal A}{\bigotimesu}^2 \geq \frac{\sum^R_{i=1}\lambda_i^2}{\xi(d-1)^2n_{d-2}n_{d-1}} \geq \frac{G_{\max}}{\xi(d-1)^2n_{d-2}n_{d-1}} ,
\end{equation}
where $\xi(\cdot)$ is defined in Proposition \ref{prop:rank1}, and $G_{\max}$ denotes the maximal value of \eqref{prob:ortho_main_max}.
\end{proposition}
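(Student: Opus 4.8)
The plan is to bound each summand of $G$ separately and to recognize it as a rank-1 approximation quality for a tensor of order $d-1$, so that Proposition \ref{prop:rank1} applies directly. Since $t=1$, fix an index $i$ and set $\mathcal B_i := \mathcal A\times_d \mathbf u_{d,i}^\top\in\mathbb R^{n_1\times\cdots\times n_{d-1}}$, so that $\innerprod{\mathcal A}{\bigotimesu}=\innerprod{\mathcal B_i}{\bigotimes^{d-1}_{j=1}\mathbf u_{j,i}}$ and $(\mathbf u_{1,i},\ldots,\mathbf u_{d-1,i})$ is precisely the output of $\mathrm{rank1approx}(\mathcal B_i)$ in Step 2 of Procedure \ref{proc:init}. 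Assuming (after relabeling, consistent with the indices $n_{d-2},n_{d-1}$ appearing in the claim) that $n_1\leq\cdots\leq n_{d-1}$, Proposition \ref{prop:rank1} with $m=d-1$ gives $\innerprod{\mathcal B_i}{\bigotimes^{d-1}_{j=1}\mathbf u_{j,i}}\geq \bigfnorm{B_i}/(\xi(d-1)\sqrt{n_{d-2}n_{d-1}})$, where $B_i$ is the reshaping matrix of $\mathcal B_i$.

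The next step is to identify $\bigfnorm{B_i}$ with a singular value of $A_{(d)}$. Reshaping preserves the Frobenius norm, so $\bigfnorm{B_i}=\bigfnorm{\mathcal B_i}$, and unfolding shows $\mathcal B_i$ is (a reshape of) the vector $\mathbf u_{d,i}^\top A_{(d)}$; because $\mathbf u_{d,i}$ is the $i$-th left singular vector of $A_{(d)}$ from Step 1, we get $\mathbf u_{d,i}^\top A_{(d)}=\lambda_i\mathbf q_i^\top$ for the matching right singular vector $\mathbf q_i$, hence $\bigfnorm{\mathcal B_i}=\lambda_i$. Since $\lambda_R>0$, every such bound is strictly positive, so each $\innerprod{\mathcal A}{\bigotimesu}$ is positive and may be squared without reversing the inequality; squaring and summing over $i=1,\ldots,R$ then yields the first inequality of \eqref{eq:init}.

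For the second inequality it suffices to show $G_{\max}\leq\sum^R_{i=1}\lambda_i^2$. At an arbitrary feasible point I would write $\innerprod{\mathcal A}{\bigotimesu}=\mathbf u_{d,i}^\top A_{(d)}\mathbf w_i$, where $\mathbf w_i$ is the Kronecker product of the unit vectors $\mathbf u_{1,i},\ldots,\mathbf u_{d-1,i}$ and hence itself a unit vector. Dropping the Kronecker structure and maximizing over all unit $\mathbf w_i$, Cauchy--Schwarz gives $(\mathbf u_{d,i}^\top A_{(d)}\mathbf w_i)^2\leq \mathbf u_{d,i}^\top A_{(d)}A_{(d)}^\top\mathbf u_{d,i}$, whence $G\leq\sum^R_{i=1}\mathbf u_{d,i}^\top(A_{(d)}A_{(d)}^\top)\mathbf u_{d,i}$ with $\{\mathbf u_{d,i}\}_{i=1}^R$ orthonormal. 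The eigenvalues of $A_{(d)}A_{(d)}^\top$ are the $\lambda_k^2$, so Ky Fan's maximum principle bounds the right-hand side by $\sum^R_{i=1}\lambda_i^2$, which is exactly $G_{\max}\leq\sum^R_{i=1}\lambda_i^2$.

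The reshaping/unfolding bookkeeping and the Cauchy--Schwarz estimate are routine; the crux is matching the two halves, with the lower bound driven by applying Proposition \ref{prop:rank1} mode-wise together with the identity $\bigfnorm{\mathcal B_i}=\lambda_i$, and the upper bound $G_{\max}\leq\sum_i\lambda_i^2$ coming from Ky Fan. The main obstacle I anticipate is the careful invocation of Proposition \ref{prop:rank1}: it requires $m=d-1\geq 3$ (i.e.\ $d\geq 4$) and the ordering $n_1\leq\cdots\leq n_{d-1}$, and one must check that the reshaping matrix $B_i$ there is the very one whose Frobenius norm equals $\bigfnorm{\mathcal B_i}=\lambda_i$. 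The borderline case $d=3$, in which $\mathcal B_i$ is a matrix and $\mathrm{rank1approx}$ returns its leading singular pair, would be checked separately; there $\innerprod{\mathcal B_i}{\mathbf u_{1,i}\otimes\mathbf u_{2,i}}=\bignorm{\mathcal B_i}_2\geq\bigfnorm{\mathcal B_i}/\sqrt{n_1}$, which dominates the stated bound.
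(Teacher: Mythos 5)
Your proof is correct and follows essentially the same route as the paper's: both apply Proposition \ref{prop:rank1} to $\mathcal A\times_d\mathbf u_{d,i}^\top$ (the paper writes this tensor as $\lambda_i\mathcal V_{d,i}$ with $\bigfnorm{\mathcal V_{d,i}}=1$, which is exactly your identity $\bigfnorm{\mathcal B_i}=\lambda_i$) and then invoke $\sum^R_{i=1}\lambda_i^2\geq G_{\max}$. Your Ky Fan justification of that upper bound and your separate check of the $d=3$ case merely fill in details the paper leaves implicit, so the argument is the same.
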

\begin{proof}
	Let $(\mathbf u_{d,i},\mathbf v_{d,i})$ be the singular vector pair corresponding to $\lambda_i$ of $A_{(d)}$. Then it holds that $ A_{(d)}^\top\mathbf u_{d,i} = \lambda_i \mathbf v_{d,i} $. Denote   $\mathcal V_{d,i} = $\texttt{reshape}$(\mathbf v_{d,i}, n_1,\ldots,n_{d-1} ) \in \mathbb R^{n_1\times \cdots \times n_{d-1}}$. From step 2 of Procedure \ref{proc:init} we see that $(\mathbf u_{1,i},\ldots,\mathbf u_{{d-1},i})$ is obtained from applying Procedure \ref{proc:rank1approx} to $\mathcal A\times_d\mathbf u_{d,i}^\top=\lambda_i\mathcal V_{d,i}$.  We have
	\begin{eqnarray*} \setlength\abovedisplayskip{3pt}
		\setlength\abovedisplayshortskip{3pt}
		\setlength\belowdisplayskip{3pt}
		\setlength\belowdisplayshortskip{3pt}
	\sum^R_{i=1}\nolimits \innerprod{\mathcal A}{\bigotimesu}^2 &= & \sum^R_{i=1}\nolimits\lambda_i^2 \innerprod{\mathcal V_{d,i}}{\bigotimes^{d-1}_{j=1}\nolimits \mathbf u_{j,i}  }^2\\
	&\geq& \sum^R_{i=1}\nolimits \lambda_i^2/  (\xi(d-1)^2n_{d-2}n_{d-1}),
	\end{eqnarray*}
where the inequality follows from   Proposition \ref{prop:rank1} and by noticing that $\bignorm{\mathcal V_{d,i}}_F=1$. The second inequality of \eqref{eq:init} is due to that $\sum^R_{i=1}\lambda_i^2$ is clearly an upper bound of problem \eqref{prob:ortho_main_max}.
\end{proof}
\begin{remark}
1. Procedure \ref{proc:init}    is in the same spirit, while generalizes  the initialization procedure for best rank-1 approximation; see, e.g., \cite[Sect. 6]{kofidis2002best} and \cite[Alg. 1]{he2010approximation}. The most important information has been captured as much as possible.

2. While it is not the main concern, the lower bound can be improved with  more careful analysis. Empirically we have observed that the   procedure has a better ratio $G(\mathbf u_{j,i})/\sum^R_{i=1}\lambda_i^2$ 
  than the theoretical one   in \eqref{eq:init}, and  performs much better than randomized initializers with respect to the objective value of \eqref{prob:ortho_main_max}, when $1\leq t\leq d-1$, as illustrated in Table \ref{tab:init}.
  
  3. The analysis cannot be directly extended to $t>2$, unless the mode-$d$ unfolding   of $\mathcal A\times_{d-t+1}U_{d-t+1}^\top\times \cdots \times_d U_d^\top$ is equivalent to   $A_{(d)} ( U_d\odot\cdots \odot U_{d-t+1}  ) $, where $\odot$ denotes the Khatri-Rao product \cite{kolda2010tensor}.
\end{remark}

\begin{table}[h]
	\centering
			\renewcommand\arraystretch{0.2}
		\caption{\label{tab:init}Illustration of the ratio $G(  {\mathbf u_{j,i}^p   })/ G(\mathbf u_{j,i}^r   )$, where $\mathbf u^p_{j,i}$ are generated by Procedure \ref{proc:init}, while $\mathbf u_{j,i}^r$ are generated randomly. The table shows the averaged ratios on $n\times n\times n\times n$     tensors over $50$ instances  in which the entries are randomly drawn from the normal distribution. We see that except $t=4$, the procedure gives a much better initializer.}
	\begin{mytabular1}{ccccc}
		\toprule
		& $n=10$ & $n=20$& $n=30$ & $n=40$ \\
		\toprule
	$t=1$	& 80.31 &  188.98&  237.95& 347.87 \\
	\midrule
	$t=2$	&  66.31&  134.11&  199.87&  302.24\\
	\midrule
	$t=3$	&  19.87&  34.25& 54.81 &69.87\\
	\midrule
	$t=4$	& 2.17 & 1.95& 1.97& 1.6 \\
	\bottomrule
	\end{mytabular1}
\end{table}

 \section{Convergence Results of $\epsilon$-ALS}\label{sec:convergence}
 
 \subsection{Properties of the polar decomposition}
 ~\\
 \begin{theorem}[Relation between polar decomposition and SVD, c.f. \cite{higham1986computing}] \label{th:polar_dec}
Let $C\in\mathbb R^{m\times n}$, $m\geq n$. Then there exist 	 $U\in\mathbb R^{m\times n}$ and a unique positive semidefinite matrix $H\in\mathbb R^{n\times n}$ such that
\[ \setlength\abovedisplayskip{3pt}
\setlength\abovedisplayshortskip{3pt}
\setlength\belowdisplayskip{3pt}
\setlength\belowdisplayshortskip{3pt}
C = UH,~U^\top U = I\in\mathbb R^{n\times n}.
\]
$(U,H)$ is the polar decomposition of $C$. If ${\rm rank}(C)=n$, then $H$ is positive definite and $U$ is uniquely determined.

Furthermore, let $H=Q\Lambda Q^\top$, $Q,\Lambda\in\mathbb R^{n\times n}$ be the eigenvalue decomposition of $H$, namely, $Q^\top Q = QQ^\top = I$, $\Lambda = {\rm diag}(\lambda_1,\ldots,\lambda_n)$ be a diagonal matrix where $\lambda_1\geq \cdots\geq \lambda_n\geq 0$. Then   
$
U = PQ^\top$, and  $C = P\Lambda Q^\top $
  is exactly the SVD of $C$.
 \end{theorem}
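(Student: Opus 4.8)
The plan is to derive everything from the singular value decomposition (SVD), which exists for every real matrix. Since $m\geq n$, write the reduced SVD of $C$ as $C = P\Lambda Q^\top$, where $P\in\mathbb R^{m\times n}$ satisfies $P^\top P = I$, $Q\in\mathbb R^{n\times n}$ is orthogonal, and $\Lambda = {\rm diag}(\lambda_1,\ldots,\lambda_n)$ collects the singular values with $\lambda_1\geq\cdots\geq\lambda_n\geq 0$. I would first establish \emph{existence} by setting $U := PQ^\top$ and $H := Q\Lambda Q^\top$. Then $H$ is symmetric positive semidefinite by construction, $U^\top U = Q P^\top P Q^\top = QQ^\top = I$, and $UH = PQ^\top Q\Lambda Q^\top = P\Lambda Q^\top = C$, so $(U,H)$ is a polar decomposition of $C$.

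For \emph{uniqueness} of $H$, the key observation is that any factorization $C = UH$ with $U^\top U = I$ and $H$ symmetric positive semidefinite forces $C^\top C = H U^\top U H = H^2$. Since $H$ is positive semidefinite it must therefore equal the unique positive semidefinite square root of $C^\top C$, namely $H = (C^\top C)^{1/2}$; here I would invoke the standard fact that a positive semidefinite matrix admits exactly one positive semidefinite square root, which itself follows by diagonalizing $C^\top C$. This pins down $H$ independently of any choice of $U$. For the rank condition, if ${\rm rank}(C)=n$ then $C^\top C$ is positive definite, hence so is its square root $H$; consequently $H$ is invertible and $U = C H^{-1}$ is uniquely determined.

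Finally, for the SVD correspondence, given an eigenvalue decomposition $H = Q\Lambda Q^\top$ with $Q^\top Q = QQ^\top = I$ and $\Lambda = {\rm diag}(\lambda_1,\ldots,\lambda_n)$, $\lambda_1\geq\cdots\geq\lambda_n\geq 0$, I would define $P := UQ$ and verify $P^\top P = Q^\top U^\top U Q = Q^\top Q = I$, so that $P$ has orthonormal columns. Substituting yields $C = UH = UQ\Lambda Q^\top = P\Lambda Q^\top$, and since $\Lambda$ is diagonal with nonnegative nonincreasing entries while $P$ and $Q$ have orthonormal columns, this is precisely an SVD of $C$. The relation $U = PQ^\top$ then follows immediately from $P = UQ$ and the orthogonality of $Q$.

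The argument is essentially bookkeeping once the SVD is available, so I do not anticipate a serious technical obstacle. The one point deserving genuine care is the pair of \emph{uniqueness} claims: establishing that the positive semidefinite square root is unique (so that $H$ is well defined), and recognizing that $U$ is guaranteed unique \emph{only} in the full-rank case. When ${\rm rank}(C)<n$, the left singular directions associated with the zero singular values leave freedom in $U$, which is exactly why the theorem restricts the uniqueness of $U$ to the case ${\rm rank}(C)=n$.
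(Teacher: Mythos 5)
Your proof is correct and complete. Note, however, that the paper does not actually prove this theorem: it is stated as a known result with a citation to Higham's work on computing the polar decomposition, so there is no in-paper argument to compare against. What you have written is the standard self-contained derivation --- existence of $(U,H)$ read off from the reduced SVD, uniqueness of $H$ via $C^\top C = H^2$ and the uniqueness of the positive semidefinite square root, uniqueness of $U=CH^{-1}$ in the full-rank case, and the reverse direction recovering the SVD from any eigenvalue decomposition of $H$. Your closing remark about where uniqueness of $U$ genuinely fails (the left singular directions attached to zero singular values) is exactly the right caveat, and it is also the point the paper itself leans on later when it observes that $\lambda_n>0$ is what forces $U$ to be determined. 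The only thing you leave as a black box is the uniqueness of the positive semidefinite square root, which you correctly flag and which is standard; supplying it (eigenspace-by-eigenspace, using that any PSD square root of $C^\top C$ commutes with it) would make the argument fully elementary.
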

 
The following property holds \cite{higham1986computing,chen2009tensor}. Although \cite{higham1986computing,chen2009tensor} do not mention the converse part, from the relation between SVD and polar decomposition, we can not hard to check   the validness of the converse part.
 \begin{lemma}\label{lem:polar_max}
 	Let $C\in\mathbf R^{m\times n}$, $m\geq n$ admit the polar decomposition $C = UH$ with $U\in\mathbb R^{m\times n},H\in\mathbb R^{n\times n}$, $U^\top U = I$, and $H$ is a positive semidefinite matrix. Then
 	\[ \setlength\abovedisplayskip{3pt}
 	\setlength\abovedisplayshortskip{3pt}
 	\setlength\belowdisplayskip{3pt}
 	\setlength\belowdisplayshortskip{3pt}
 	U\in\arg\max_{X^\top X = I} {\rm tr}\bigxiaokuohao{  X^\top C   }.
 	\]
 	Conversely, if $U$ is a maximizer of the above problem, then there exists a positive semidefinite matrix $H$ such that $C=UH$.
 \end{lemma}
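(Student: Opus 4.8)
The plan is to reduce both implications to the singular value decomposition $C = P\Lambda Q^\top$ furnished by Theorem~\ref{th:polar_dec}, whose associated polar factors are $U = PQ^\top$ and $H = Q\Lambda Q^\top$, with $\Lambda = {\rm diag}(\lambda_1,\ldots,\lambda_n)$ and $\lambda_1\geq\cdots\geq\lambda_n\geq 0$. The organizing quantity is $\sum_{i=1}^n\lambda_i = {\rm tr}(H)$, which I will show is exactly the optimal value of the trace problem, attained precisely at the polar factor $U$.

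For the forward direction, I would first note that $U^\top C = U^\top U H = H$, so ${\rm tr}(U^\top C) = {\rm tr}(H) = \sum_i\lambda_i$. For an arbitrary feasible $X$ with $X^\top X = I$, set $Z := P^\top X Q\in\mathbb{R}^{n\times n}$ and compute ${\rm tr}(X^\top C) = {\rm tr}(Q^\top X^\top P\Lambda) = {\rm tr}(Z^\top\Lambda) = \sum_i Z_{ii}\lambda_i$. The key estimate is $Z^\top Z = Q^\top X^\top(PP^\top)X Q\preceq Q^\top X^\top X Q = I$, since $PP^\top$ is an orthogonal projector and hence $PP^\top\preceq I$; this forces $\|Z\|_2\leq 1$, so $|Z_{ii}|\leq 1$ for every $i$. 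As each $\lambda_i\geq 0$, we obtain ${\rm tr}(X^\top C) = \sum_i Z_{ii}\lambda_i\leq\sum_i\lambda_i = {\rm tr}(U^\top C)$, which shows that $U$ is a maximizer.

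For the converse, suppose $X = U$ attains the maximum, so that with $Z := P^\top U Q$ one has $\sum_i Z_{ii}\lambda_i = \sum_i\lambda_i$. Writing $r = {\rm rank}(C)$ so that $\lambda_1,\ldots,\lambda_r > 0 = \lambda_{r+1} = \cdots = \lambda_n$, the bounds $Z_{ii}\leq 1$ together with $\lambda_i\geq 0$ force $Z_{ii} = 1$ for every $i\leq r$. Using both $Z^\top Z\preceq I$ and $ZZ^\top = P^\top(UU^\top)P\preceq I$, the unit diagonal entries pin down the corresponding columns and rows: $\sum_k Z_{ki}^2\leq 1$ combined with $Z_{ii}=1$ makes the $i$-th column equal to $e_i$, and symmetrically the $i$-th row equals $e_i^\top$, for each $i\leq r$. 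Thus $Z$ has the block form $\left[\begin{smallmatrix} I_r & 0 \\ 0 & Z'\end{smallmatrix}\right]$, whence a short computation gives $Z^\top\Lambda = \Lambda$, so that the candidate $H := U^\top C = Q Z^\top\Lambda Q^\top = Q\Lambda Q^\top$ is symmetric positive semidefinite.

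It then remains to verify $UH = C$, equivalently $UQ\Lambda = P\Lambda$, which reduces to checking $Uq_j = p_j$ for the columns indexed by $j\leq r$ (those with $\lambda_j>0$). The relation $Z = P^\top U Q$ says precisely that $p_i^\top(Uq_j) = \delta_{ij}$ for all $i$, so $Uq_j = p_j + w_j$ with $w_j$ orthogonal to the column space of $P$. I expect this to be the main obstacle: because $U$ is only a tall matrix with orthonormal columns and $UU^\top\neq I$ in general, $Uq_j$ need not \emph{a priori} lie in ${\rm span}(p_1,\ldots,p_n)$. The resolution is the isometry identity $1 = \|q_j\|^2 = \|Uq_j\|^2 = \|p_j\|^2 + \|w_j\|^2 = 1 + \|w_j\|^2$, which forces $w_j = 0$ and hence $Uq_j = p_j$; this yields $UH = C$ and completes the converse. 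The only genuinely delicate bookkeeping is the handling of the zero singular values (the indices $j>r$), which is why I isolate the rank $r$ and argue column- and row-wise rather than attempting to invert $\Lambda$.
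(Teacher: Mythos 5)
Your proof is correct. The paper never actually writes out a proof of this lemma --- it cites \cite{higham1986computing,chen2009tensor} for the forward direction and merely remarks that the converse ``is not hard to check'' from the relation between the SVD and the polar decomposition --- and your argument is a complete and correct realization of exactly that suggested route, including the two points the paper's remark glosses over: the zero singular values (isolating the rank $r$ and arguing that the leading $r\times r$ block of $Z=P^\top U Q$ is the identity via both $Z^\top Z\preceq I$ and $ZZ^\top\preceq I$), and the fact that $UU^\top\neq I$ for a tall $U$, which you resolve with the isometry identity $\|Uq_j\|=\|q_j\|$ to kill the component of $Uq_j$ outside the column space of $P$.
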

 
  The following result strengthens the above lemma which characterizes an explicit  lower bound for the gap between ${\rm tr}(U^\top C) - {\rm tr}(X^\top C)$.  For that purpose, 
 for $C\in\mathbb R^{m\times n}$, $m\geq n$, let $C = P\Lambda Q^\top$ denote the SVD of $C$, where $P\in\mathbb R^{m\times n}$, $\Lambda,Q\in\mathbb R^{n\times n}$, $P^\top P = I$, $Q^\top Q=QQ^\top = I$, $\Lambda = {\rm diag}(\lambda_1,\ldots,\lambda_n)$ where $\lambda_i$'s are arranged in a descending order.  

 \begin{lemma}
\label{lem:polar_max_sufficient}
Under the setting of Lemma \ref{lem:polar_max}, let $X\in\mathbb R^{m\times n}$ be an arbitrary matrix satisfying $X^\top X = I$. Then there holds
\[			      \setlength\abovedisplayskip{3pt}
\setlength\abovedisplayshortskip{3pt}
\setlength\belowdisplayskip{3pt}
\setlength\belowdisplayshortskip{3pt}
{\rm tr}(U^\top C) - {\rm tr}(X^\top C) \geq \frac{\lambda_n}{2}\| U - X\|_F^2.
\]
 \end{lemma}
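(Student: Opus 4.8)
The plan is to reduce the claimed gap to a single weighted sum over the singular values of $C$ and then to exploit that the product $U^\top X$ of two matrices with orthonormal columns has spectral norm at most one. This makes the inequality fall out termwise, with the smallest singular value $\lambda_n$ appearing precisely as the worst-case weight.

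First I would use the polar decomposition $C=UH$ to rewrite both traces intrinsically. Since $U^\top U=I$ we have $U^\top C=H$, so ${\rm tr}(U^\top C)={\rm tr}(H)$, while ${\rm tr}(X^\top C)={\rm tr}(X^\top U H)$. Substituting the eigendecomposition $H=Q\Lambda Q^\top$ from Theorem \ref{th:polar_dec} and setting $N:=Q^\top U^\top X Q$, the cyclic property of the trace together with $\Lambda$ being diagonal gives ${\rm tr}(X^\top U H)=\sum_i N_{ii}\lambda_i$, hence
\[
{\rm tr}(U^\top C)-{\rm tr}(X^\top C)=\sum_{i=1}^n \lambda_i(1-N_{ii}).
\]
On the other side, expanding the Frobenius norm and using $U^\top U=X^\top X=I$ yields $\|U-X\|_F^2=2n-2\,{\rm tr}(U^\top X)=2\sum_i(1-N_{ii})$, because orthogonal invariance of the trace gives ${\rm tr}(U^\top X)={\rm tr}(N)=\sum_i N_{ii}$. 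Thus the target inequality is exactly
\[
\sum_{i=1}^n(\lambda_i-\lambda_n)(1-N_{ii})\geq 0.
\]

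The key remaining point, and the only place where any real estimate enters, is to show each factor $1-N_{ii}$ is nonnegative; the factors $\lambda_i-\lambda_n$ are already nonnegative by the descending ordering of the singular values. For this I would observe that $U^\top X$ has spectral norm at most one, since $\|U^\top X\|_2\leq\|U\|_2\|X\|_2=1$ (both factors have orthonormal columns, so all their singular values equal $1$). Orthogonal conjugation by $Q$ preserves singular values, so $\|N\|_2\leq 1$ as well, and every diagonal entry of a matrix is bounded in modulus by its spectral norm, giving $N_{ii}\leq|N_{ii}|\leq\|N\|_2\leq 1$. Hence every summand is nonnegative and the inequality follows.

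I do not expect a genuine obstacle here: the reduction is routine linear algebra and the spectral-norm bound on $U^\top X$ is elementary. The one point to state carefully is the passage from $H$ to its eigenbasis so that the weights $\lambda_i$ line up with the diagonal of $N$; checking that ${\rm tr}(X^\top U H)$ indeed collapses to $\sum_i N_{ii}\lambda_i$, rather than picking up off-diagonal contributions, relies on $H=Q\Lambda Q^\top$ with $\Lambda$ diagonal and is worth verifying explicitly. Everything else amounts to collecting the two sides into the single comparison $\sum_i(\lambda_i-\lambda_n)(1-N_{ii})\geq 0$.
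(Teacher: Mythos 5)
Your proof is correct and follows essentially the same route as the paper's: both pass to the SVD $C=P\Lambda Q^\top$ and reduce the gap to the termwise-nonnegative sum $\sum_i\lambda_i(1-N_{ii})$ with $N_{ii}=\mathbf p_i^\top\mathbf y_i$ in the paper's column notation, then bound each weight below by $\lambda_n$. The only cosmetic difference is that you justify $N_{ii}\leq 1$ via the spectral-norm bound on $U^\top X$, whereas the paper gets the same fact directly from the identity $1-\mathbf p_i^\top\mathbf y_i=\tfrac12\|\mathbf p_i-\mathbf y_i\|^2\geq 0$ for unit columns.
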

\begin{proof}
From Theorem \ref{th:polar_dec}, it holds that $U = PQ^\top$ where $P$ and $Q$ are described as above. For an arbitrary matrix $X$, define $Y:= XQ\in\mathbf R^{m\times n}$. Then it is clear that $Y^\top Y = I$. On the other hand, from the orthogonality of $Q$ it follows $X = YQ^\top$. Write $P = [\mathbf p_1,\ldots,\mathbf p_n]$, $Q = [\mathbf q_1,\ldots,\mathbf q_n]$; then SVD shows that $C\mathbf q_i = \lambda_i \mathbf p_i$.  Correspondingly we also write $Y =[\mathbf y_1,\ldots,\mathbf y_n]$ with $\mathbf y_i\in\mathbb R^m$.    With these pieces at hand, we have
\begin{eqnarray*}			      \setlength\abovedisplayskip{1pt}
	\setlength\abovedisplayshortskip{1pt}
	\setlength\belowdisplayskip{1pt}
	\setlength\belowdisplayshortskip{1pt}
{\rm tr}(U^\top C) - {\rm tr}(X^\top C) &=&   {\rm tr}( QP^\top C) - {\rm tr}( QY^\top C)={\rm tr}( P^\top CQ) - {\rm tr}( Y^\top CQ)\\
&=&  \sum^n_{i=1}\mathbf p_i^\top C\mathbf q_i - \sum^n_{i=1} \mathbf y_i^\top C\mathbf q_i\\
&=&  \sum^n_{i=1}\lambda_i \mathbf p_i^\top \mathbf p_i - \sum^n_{i=1} \lambda_i\mathbf y_i^\top \mathbf p_i\\
&=&   \sum^n_{i=1}\frac{\lambda_i}{2}\| \mathbf p_i - \mathbf y_i\|^2\\
&\geq & \frac{\lambda_n}{2}\| P-Y\|_F^2 =\frac{\lambda_n}{2}\| U - X\|_F^2,
\end{eqnarray*}
where the fifth equality uses the fact that $\mathbf p_i$ and $\mathbf y_i$ are normalized, and the last equality is due to the orthogonality of $Q$. The proof has been completed.
\end{proof}


When $\lambda_n>0$, we see that the inequality in the above lemma holds strictly. Since $\lambda_n>0$ is equivalent to ${\rm rank}(C)=n$, this also shows   why $U$ is uniquely determined, as stated in Theorem \ref{th:polar_dec}.
The following lemma, which generalizes Lemma \ref{lem:polar_max_sufficient}, also serves as a key property to prove the convergence.
\begin{lemma}
	\label{lem:polar_max_augmented_sufficient}
	Let $X\in\mathbb R^{m\times n}$ be an arbitrary matrix satisfying $X^\top X = I$ where $m\geq n$. For any $C\in\mathbb R^{m\times n}$, consider the polar decomposition of $\tilde C := C+\epsilon X = UH$ with $U^\top U = I$ and $H$ being positive semidefinite where $\epsilon>0$. Denote $\lambda_i(\cdot)$ as the $i$-th largest singular value of a matrix. Then there holds
	\[			       \setlength\abovedisplayskip{3pt}
	\setlength\abovedisplayshortskip{3pt}
	\setlength\belowdisplayskip{3pt}
	\setlength\belowdisplayshortskip{3pt}
	{\rm tr}(U^\top C) - {\rm tr}(X^\top C) \geq \frac{\lambda_n( \tilde C ) + \epsilon}{2} \| U-X\|_F^2.
	\]
\end{lemma}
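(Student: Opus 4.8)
The plan is to deduce this directly from Lemma \ref{lem:polar_max_sufficient} by applying that lemma not to $C$ but to the perturbed matrix $\tilde C = C + \epsilon X$. This is legitimate because, by hypothesis, $(U,H)$ is precisely the polar decomposition of $\tilde C$, so $U$ plays the role of the polar factor and $\lambda_n(\tilde C)$ is the smallest singular value of $\tilde C$, which is exactly the constant that Lemma \ref{lem:polar_max_sufficient} attaches to its bound.

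First I would invoke Lemma \ref{lem:polar_max_sufficient} with $\tilde C$ in place of $C$, which yields
\[
{\rm tr}(U^\top \tilde C) - {\rm tr}(X^\top \tilde C) \geq \frac{\lambda_n(\tilde C)}{2}\| U - X\|_F^2 .
\]
Next I would substitute $\tilde C = C + \epsilon X$ and expand the two traces. Using $X^\top X = I$, so that ${\rm tr}(X^\top X) = n$, the left-hand side becomes ${\rm tr}(U^\top C) + \epsilon\,{\rm tr}(U^\top X) - {\rm tr}(X^\top C) - \epsilon n$. Rearranging isolates the target quantity ${\rm tr}(U^\top C) - {\rm tr}(X^\top C)$, leaving the extra terms $\epsilon n - \epsilon\,{\rm tr}(U^\top X)$ to be absorbed.

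The final step is the elementary orthonormality identity: since $U^\top U = X^\top X = I$,
\[
\| U - X\|_F^2 = {\rm tr}(U^\top U) - 2\,{\rm tr}(U^\top X) + {\rm tr}(X^\top X) = 2n - 2\,{\rm tr}(U^\top X),
\]
so that $\epsilon n - \epsilon\,{\rm tr}(U^\top X) = \frac{\epsilon}{2}\| U - X\|_F^2$. Adding this to the $\frac{\lambda_n(\tilde C)}{2}\|U-X\|_F^2$ term produces exactly $\frac{\lambda_n(\tilde C) + \epsilon}{2}\|U-X\|_F^2$, which is the claimed inequality.

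I do not anticipate a genuine obstacle here, as the argument is essentially a one-line reduction followed by a trace identity. The only point requiring care is conceptual rather than computational: one must confirm that Lemma \ref{lem:polar_max_sufficient} is being applied to $\tilde C$ — whose polar factor is the given $U$ — rather than to $C$ itself, so that the singular value entering the bound is $\lambda_n(\tilde C)$ and not $\lambda_n(C)$. This is exactly what allows the perturbation $\epsilon$ to contribute an additional $\frac{\epsilon}{2}\|U-X\|_F^2$, strictly improving the constant even when $C$ is rank-deficient (so that $\lambda_n(C) = 0$), which is precisely the feature that will later let the $\epsilon$-ALS analysis dispense with the full-rank assumption.
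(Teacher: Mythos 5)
Your proposal is correct and is essentially the paper's own argument: both apply Lemma \ref{lem:polar_max_sufficient} to $\tilde C$ (whose polar factor is the given $U$) and then absorb the $\epsilon$ cross-terms using $\|U\|_F^2=\|X\|_F^2=n$ via the identity $\|U-X\|_F^2=2n-2\,{\rm tr}(U^\top X)$. The paper packages this last step as a Taylor expansion of the quadratic $q(Z)=\langle Z,C\rangle+\frac{\epsilon}{2}\|Z\|_F^2$ at $X$, which is algebraically identical to your direct trace expansion.
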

\begin{proof}
 $\|U\|_F^2 =  \|X \|_F^2= n^2$ from their columnwise  orthornormality and so
\begin{eqnarray*} \setlength\abovedisplayskip{3pt}
	\setlength\abovedisplayshortskip{3pt}
	\setlength\belowdisplayskip{3pt}
	\setlength\belowdisplayshortskip{3pt}
	{\rm tr}(U^\top C ) - {\rm tr}(X^\top C) & = & \innerprod{U}{C } + \frac{ \epsilon}{2}\|U\|_F^2 - \innerprod{X}{C }- \frac{\epsilon}{2}\|X\|_F^2\\
	&=& \innerprod{U-X}{\tilde C} + \frac{\epsilon}{2}\|U-X\|_F^2\\
	&\geq & \frac{\lambda_n(\tilde C) + \epsilon  }{2}\|U-X\|_F^2,
\end{eqnarray*}
where the second equality uses the Taylor expansion of the quadratic function $q(Z) = \innerprod{Z}{C} + \frac{\epsilon}{2}\|Z\|_F^2$ at $X$, while the inequality is due to the assumption and Lemma \ref{lem:polar_max_sufficient}. This completes the proof.
\end{proof}

\setlength\abovedisplayskip{3pt}
\setlength\abovedisplayshortskip{3pt}
\setlength\belowdisplayskip{3pt}
\setlength\belowdisplayshortskip{3pt}
    \subsection{Diminishing of  $\| \mathbf u^k_{j,i}- \mathbf u^{k+1}_{j,i}\| $ when $\epsilon_i>0$}
An important issue to prove the convergence is to claim   $\| \mathbf u^k_{j,i}- \mathbf u^{k+1}_{j,i}\|\rightarrow 0$ as $k\rightarrow \infty$ for each $j$ and $i$. To achieve this,
  it suffices  to show that 
$
 h(\mathbf u^{k+1}_{j,i}) - h(\mathbf u^{k }_{j,i}) \geq c\|\mathbf u^k_{j,i}- \mathbf u^{k+1}_{j,i}\|^2
$
 for a fixed constant $c>0$ and for some potential cost function $h(\cdot)$. 
 By noticing the     auxiliary variables $\omega_i$   in Algorithm \ref{alg:main} and recall that $ \sigma_i = \innerprod{\mathcal A}{\bigotimesu}$ in \eqref{prob:ortho_main_max} ,  we propose to use the following cost function
\begin{equation*} \setlength\abovedisplayskip{3pt}
\setlength\abovedisplayshortskip{3pt}
\setlength\belowdisplayskip{3pt}
\setlength\belowdisplayshortskip{3pt}
\label{prob:cost_fun_potential}
H(\mathbf u_{i,j} ,\omega_i  )  := \sum^R_{i=1} \nolimits\sigma_i\omega_i. 
\end{equation*}
Based on the simple observation that the optimal solution of $\max_{\|\boldsymbol{ \omega}\|=1} \innerprod{\boldsymbol{ \omega}}{\mathbf a} $  is given by 
	$\boldsymbol{ \omega} = \mathbf a/\|\mathbf a\|$,   we have 
\begin{proposition}
	\label{prop:connection_sigma_omega}
	Consider 
	 \begin{equation}
	 \label{prob:ortho_main_max_omega} \setlength\abovedisplayskip{3pt}
	 \setlength\abovedisplayshortskip{3pt}
	 \setlength\belowdisplayskip{3pt}
	 \setlength\belowdisplayshortskip{3pt}
	\begin{split}
	&\max~ H(\mathbf u_{i,j},\omega_i) =  \sum^R_{i=1}{   \omega_i}{\innerprod{\mathcal A}{\bigotimesu}} \\
	&~    {\rm s.t.}~   \mathbf u_{j,i}^\top \mathbf u_{j,i} =1, j=1,\ldots,d-t, i=1,\ldots,R,~U_j^\top U_j = I, j=d-t+1,\ldots,d,\\
	&~~~~~~ \|\boldsymbol{ \omega}  \|=1.
	\end{split}
	\end{equation}
	Then problems \eqref{prob:ortho_main_max} and \eqref{prob:ortho_main_max_omega} share the same KKT points with respect to $\mathbf u_{j,i}$. Moreover, the optimal value of \eqref{prob:ortho_main_max_omega} is the square root of that of \eqref{prob:ortho_main_max}.
\end{proposition}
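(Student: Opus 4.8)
The plan is to treat the two assertions separately, both resting on the scalar fact quoted just above the statement: for fixed feasible $(\mathbf u_{j,i})$, setting $\boldsymbol\sigma=[\sigma_1,\ldots,\sigma_R]^\top$ with $\sigma_i=\innerprod{\mathcal A}{\bigotimesu}$, the maximum of $\innerprod{\boldsymbol\omega}{\boldsymbol\sigma}$ over $\bignorm{\boldsymbol\omega}=1$ equals $\bignorm{\boldsymbol\sigma}$ and is attained at $\boldsymbol\omega=\boldsymbol\sigma/\bignorm{\boldsymbol\sigma}$ whenever $\boldsymbol\sigma\neq0$. Thus partially maximizing $H$ over $\boldsymbol\omega$ collapses \eqref{prob:ortho_main_max_omega} to a problem in $(\mathbf u_{j,i})$ alone, which I then line up against \eqref{prob:ortho_main_max}.

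For the value claim I would carry out this inner maximization first: $\max_{\bignorm{\boldsymbol\omega}=1}H=\bignorm{\boldsymbol\sigma}=\sqrt{\sum_i\sigma_i^2}=\sqrt{G(\mathbf u_{j,i})}$. Maximizing over the remaining $\mathbf u$-constraints, the optimal value of \eqref{prob:ortho_main_max_omega} is therefore $\max\sqrt{G(\mathbf u_{j,i})}$; since $s\mapsto\sqrt s$ is increasing on $[0,\infty)$ this equals $\sqrt{\max G(\mathbf u_{j,i})}$, i.e. the square root of the optimum of \eqref{prob:ortho_main_max}. This is the shorter half.

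For the KKT claim I would form the Lagrangian of \eqref{prob:ortho_main_max_omega} using multipliers $\eta_{j,i}$ and symmetric $\Lambda_j$ exactly as in the derivation of \eqref{prob:kkt_main_max_tmp}, plus one scalar multiplier for $\bignorm{\boldsymbol\omega}^2=1$. Stationarity in $\boldsymbol\omega$ gives $\boldsymbol\sigma$ parallel to $\boldsymbol\omega$, hence $\omega_i=\sigma_i/\bignorm{\boldsymbol\sigma}$; stationarity in $\mathbf u_{j,i}$ reads $\omega_i\gradAuji=\eta_{j,i}\mathbf u_{j,i}$ (resp. $=\sum_r(\Lambda_j)_{i,r}\mathbf u_{j,r}$ for the orthonormal block), which differs from \eqref{prob:kkt_main_max_tmp} only in carrying the factor $\omega_i$ where \eqref{prob:kkt_main_max_tmp} carries $\sigma_i$. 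Substituting $\omega_i=\sigma_i/\bignorm{\boldsymbol\sigma}$ and absorbing the common positive scalar $\bignorm{\boldsymbol\sigma}$ into the multipliers ($\eta_{j,i}\leftarrow\bignorm{\boldsymbol\sigma}\eta_{j,i}$, $\Lambda_j\leftarrow\bignorm{\boldsymbol\sigma}\Lambda_j$, which preserves symmetry) I expect to recover \eqref{prob:kkt_main_max_tmp} verbatim, and hence---after the same eliminations $\eta_{j,i}=\sigma_i^2$, $(\Lambda_j)_{i,i}=\sigma_i^2$ that produce \eqref{prob:kkt_main_max}---the full system \eqref{prob:kkt_main_max}. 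Reading the equalities from right to left (start from a KKT point of \eqref{prob:ortho_main_max}, set $\omega_i=\sigma_i/\bignorm{\boldsymbol\sigma}$ and divide the multipliers by $\bignorm{\boldsymbol\sigma}$) gives the converse, establishing that the two problems share the same $\mathbf u_{j,i}$-stationary points.

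The one place that needs care, and which I regard as the main obstacle, is the degenerate case $\boldsymbol\sigma=0$: there the optimal $\boldsymbol\omega$ is undetermined and the substitution $\omega_i=\sigma_i/\bignorm{\boldsymbol\sigma}$ is meaningless. I would handle it by restricting the correspondence to KKT points with positive objective, equivalently $\boldsymbol\sigma\neq0$; note that individual vanishing components $\sigma_i=0$ cause no trouble, because the rescaling above acts on the whole equation rather than dividing by $\sigma_i$, so both sides degenerate consistently. Since $\mathcal A\neq0$ forces the optimum $G_{\max}>0$ and the algorithm is aimed at the maximization, this restriction loses none of the stationary points relevant to the convergence analysis.
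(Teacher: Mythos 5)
Your proposal is correct and follows essentially the same route as the paper: the paper likewise writes down the KKT system of \eqref{prob:ortho_main_max_omega}, uses the stationarity in $\boldsymbol\omega$ to get $\omega_i=\sigma_i/\bignorm{\boldsymbol\sigma}$, and observes that after rescaling the multipliers the $\mathbf u_{j,i}$-equations coincide with those of \eqref{prob:kkt_main_max_tmp}, the value claim following from the partial maximization over $\boldsymbol\omega$. Your explicit treatment of the degenerate case $\boldsymbol\sigma=0$ is a point of care the paper omits, but it does not change the argument.
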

\begin{proof}
The KKT system of \eqref{prob:ortho_main_max_omega} is given by
\begin{small}
 \begin{equation}
\label{prob:kkt_main_max_omega}\setlength\abovedisplayskip{3pt}
\setlength\abovedisplayshortskip{3pt}
\setlength\belowdisplayskip{3pt}
\setlength\belowdisplayshortskip{3pt}
\left\{\begin{array}{ll}{ \gradAuji =   \sigma_i \mathbf u_{j,i}   }, & j=1,\ldots,d-t,~i=1,\ldots,R, \\ 
\mathbf u_{j,i}^\top \mathbf u_{j,i} =1, &j=1,\ldots,d-t, i=1,\ldots,R,\\
\omega_i \gradAuji =   \sum^R_{r=1}  (\Lambda_j)_{i,r} \mathbf u_{j,r },& j=d-t+1,\ldots,d,~i = 1,\ldots,R,\\
U_j^\top U_j = I, &j=d-t+1,\ldots,d,\\
\sigma_i\in\mathbb R,~ \Lambda_j  \in\mathbb R^{R\times R},\omega_i = \sigma_i/\sqrt{\sum^R_{i=1}\sigma_i^2}.
\end{array}\right.
\end{equation}
\end{small}
Thus \eqref{prob:kkt_main_max_omega} is the same as \eqref{prob:kkt_main_max_tmp}   with respect to $\mathbf u_{j,i}$    and the second claim is clear.
\end{proof}

The cost function $H(\cdot)$ serves as a guidance to the diminishing properties of successive difference of the variables. Moreover, although   $H(\cdot)$ looks quite simple, 
 it is linear with respect to each variable. This gives    flexibility to the convergence proof and makes our analysis   different from the previous work \cite{chen2009tensor,wang2015orthogonal,guan2019numerical}. With $H(\cdot)$ at hand, in what follows we estimate the decreasing of $\mathbf u_{j,i}^k - \mathbf u_{j,i}^{k+1}$ when $\epsilon_i>0$. 

\paragraph{Diminishing of $\| \mathbf u^k_{j,i}- \mathbf u^{k+1}_{j,i}\| $ where $1\leq j\leq d-t$}
For each $1\leq j\leq d-t$ and $1\leq i\leq R$, recalling the definition of $\mathbf v^{k+1}_{j,i}, \mathbf u^{k+1}_{j,i}$ in   Algorithm \ref{alg:main}, we have that at the $(k+1)$-th iterate,
\begin{eqnarray*} \setlength\abovedisplayskip{3pt}
	\setlength\abovedisplayshortskip{3pt}
	\setlength\belowdisplayskip{3pt}
	\setlength\belowdisplayshortskip{3pt}
&&  \omega^k_i \innerprod{\mathbf v_{j,i}^{k+1}}{\mathbf u_{j,i}^{k+1}} - \omega^k_i\innerprod{\mathbf v_{j,i}^{k+1}}{\mathbf u^k_{j,i}}\\  
&=&   \omega^k_i\innerprod{\mathbf v_{j,i}^{k+1}}{\mathbf u_{j,i}^{k+1}} + \frac{\epsilon_1}{2}\|\mathbf u^{k+1}\|^2 -\omega^k_i \innerprod{\mathbf v_{j,i}^{k+1}}{\mathbf u^k_{j,i}}  - \frac{\epsilon_1}{2}\|\mathbf u^{k}\|^2 \nonumber \\
  &=& \innerprod{\mathbf v_{j,i}^{k+1}\omega^k_i + \epsilon_1\mathbf u^k_{j,i}  }{\mathbf u_{j,i}^{k+1} - \mathbf u^k_{j,i}} + \frac{\epsilon_1}{2}\|\mathbf u^{k+1}_{j,i} - \mathbf u^k_{j,i}\|^2\nonumber \\
&=&  { \|\mathbf v^{k+1}_{j,i}\omega^k_i + \epsilon_1\mathbf u^k_{j,i}\|  - \innerprod{\mathbf v^{k+1}_{j,i}\omega^k_i + \epsilon_1\mathbf u^k_{j,i}}{\mathbf u^k_{j,i}}  } + \frac{\epsilon_1}{2}\|\mathbf u^{k+1}_{j,i} - \mathbf u^k_{j,i}\|^2 \nonumber\\
&=&\frac{  \|\mathbf v^{k+1}_{j,i}\omega^k_i + \epsilon_1\mathbf u^k_{j,i}\|}{2} \bigxiaokuohao{ 2- 2 \innerprod{  \frac{\mathbf v^{k+1}_{j,i}\omega^k_i + \epsilon_1\mathbf u^k_{j,i}   }{\|\mathbf v^{k+1}_{j,i}\omega^k_i + \epsilon_1\mathbf u^k_{j,i}\|  }  }{ \mathbf u^k_{j,i} }   } + \frac{\epsilon_1}{2}\|\mathbf u^{k+1}_{j,i} - \mathbf u^k_{j,i}\|^2 \nonumber\\
&=& \frac{  \|\mathbf v^{k+1}_{j,i}\omega^k_i + \epsilon_1\mathbf u^k_{j,i}\|+\epsilon_1}{2}\bignorm{ \mathbf u^{k+1}_{j,i} - \mathbf u^k_{j,i}  }^2,   \nonumber
\end{eqnarray*}
 where the first equality uses the normality of $\mathbf u^k_{j,i}$ for each $k$, the second one follows from the Taylor expansion of the quadratic function $q(\mathbf y) = \innerprod{\mathbf v^{k+1}_{j,i}\omega^k_i+ \epsilon_1\mathbf u^k_{j,i}}{\mathbf y} +\frac{\epsilon_1}{2}\|\mathbf y\|^2$ at $\mathbf u^k_{j,i}$, while the other ones are based on the definition of $\mathbf u^{k+1}_{j,i}$ in the algorithm. On the other hand, by noticing the definition of $H(\cdot)$, we get
 \begin{small}
\begin{eqnarray}  \setlength\abovedisplayskip{3pt}
\setlength\abovedisplayshortskip{3pt}
\setlength\belowdisplayskip{3pt}
\setlength\belowdisplayshortskip{3pt}
&&H(\mathbf u^{k+1}_{1,1}, \ldots, \mathbf u^{k+1}_{j,i-1},\mathbf u^{k+1}_{j,i},\mathbf u^{k }_{j,i+1},\ldots,\mathbf u^k_{d,R},\omega^k_i) - H(\mathbf u^{k+1}_{1,1}, \ldots, \mathbf u^{k+1}_{j,i-1},\mathbf u^{k }_{j,i },\mathbf u^{k }_{j,i+1 },\ldots,\mathbf u^k_{d,R},\omega^k_i)\nonumber\\
&&= \frac{  \|\mathbf v^{k+1}_{j,i}\omega^k_i + \epsilon_1\mathbf u^k_{j,i}\|+\epsilon_1}{2}\bignorm{ \mathbf u^{k+1}_{j,i} - \mathbf u^k_{j,i}  }^2,~\forall k.  \label{eq:descent_normalization}
\end{eqnarray}
\end{small}

\paragraph{Diminishing of $\| \mathbf u^k_{j,i}- \mathbf u^{k+1}_{j,i}\| $ where $d-t+1\leq j\leq d $}   From the definition of polar decomposition, we shall estimate $\|U^{k+1}_j - U^k_j\|_F$. Denote the diagonal matrix $\Omega^k:= {\rm diag}(\omega^k_1,\ldots,\omega^k_R)\in\mathbb R^{R\times R}$. Then $\tilde V^{k+1}_j$ in line 13 of Algorithm \ref{alg:main} can be written as $\tilde V^{k+1}_j = V^{k+1}_j\Omega^k + \epsilon U^k_j$, where $V^{k+1}_j = [ \mathbf v^{k+1}_{j,1},\ldots,\mathbf v^{k+1}_{j,R} ]$. By the definition of $U^{k+1}_j$ in the algorithm, we have
\begin{equation*}
U^{k+1}_jH^{k+1}_j =\tilde V^{k+1}_j  =  {V^{k+1}_j\Omega^k + \epsilon_2 U^k_j}.
\end{equation*}
Since $(U^k_j)^\top U^k_j = I$ for each $k$,     by setting $C=   V^{k+1}_j\Omega^k$,  $X= U^k_j$ and $U = U^{k+1}_j$  in Lemma \ref{lem:polar_max_augmented_sufficient}, we have
\[			       \setlength\abovedisplayskip{3pt}
\setlength\abovedisplayshortskip{3pt}
\setlength\belowdisplayskip{3pt}
\setlength\belowdisplayshortskip{3pt}
\innerprod{U^{k+1}_j}{  V^{k+1}_j\Omega^k} - \innerprod{U^k}{  V^{k+1}_j\Omega^k } \geq \frac{\lambda_R(\tilde V^{k+1}_j ) + \epsilon_1 }{2}\| U^{k+1}_j-U^k_j\|_F^2,
\]
where $\lambda_R(\tilde V^{k+1}_j)\geq 0$ is the $R$-th largest singular value of $\tilde V^{k+1}_j$. On the other hand, note that $ \innerprod{U_j}{  V^{k+1}_j\Omega^k} =H(\mathbf u^{k+1}_{1,1},\ldots,\mathbf u^{k+1}_{d-t,R},U^{k+1}_{d-t+1},\ldots,U^{k+1}_{j-1},U_j,U^{k}_{j+1},\ldots,U^{k }_d,\omega^k_i) $;  we thus have that for each $d-t+1\leq j\leq d$,
\begin{small}
\begin{eqnarray}
&& H(\mathbf u^{k+1}_{1,1}\ldots,\mathbf u^{k+1}_{d-t,R},U^{k+1}_{d-t+1}, \ldots,   U^{k+1}_{j}, U^{k}_{j+1},  \ldots,U^{k}_d,\omega^k_i) \nonumber\\
&&~~~~~~~~~~~~~~~~~~~~ -   H(\mathbf u^{k+1}_{1,1}\ldots,\mathbf u^{k+1}_{d-t,R},U^{k+1}_{d-t+1}, \ldots, U^{k+1}_{j-1}, U^{k }_{j},   \ldots,U^{k}_d,\omega^k_i) \nonumber\\
&\geq &\frac{\lambda_R(\tilde V^{k+1}_j ) + \epsilon_1 }{2}\| U^{k+1}_j-U^k_j\|_F^2 \label{eq:descent_polar}, ~\forall k.
\end{eqnarray}
\end{small}
Finally, after the updating of all the $\mathbf u_{j,i}$'s, by noticing line 16 of Algorithm \ref{alg:main},  we have, similar to \eqref{eq:descent_normalization}
\begin{equation}
\label{eq:descent_omega} \setlength\abovedisplayskip{3pt}
\setlength\abovedisplayshortskip{3pt}
\setlength\belowdisplayskip{3pt}
\setlength\belowdisplayshortskip{3pt}
H(\mathbf u^{k+1}_{j,i},\boldsymbol{ \omega}^{k+1}) - H(\mathbf u^{k+1}_{j,i},
\boldsymbol{\omega}^k) = \frac{\bignorm{\boldsymbol{ \sigma}^{k+1}}}{2}\bignorm{ \boldsymbol{ \omega}^{k+1} - \boldsymbol{ \omega}^k  }^2,~\forall k.
\end{equation}

Under the above discussions, we see that the sequence of the objective function is nondecreasing, whose limit is denoted as $H^{\infty}$   in the following. Since the constraints are compact, $H^{\infty}<\infty$. On the other hand, we can without loss of generality assume the initial objective value is positive.
\begin{proposition}
	\label{prop:nondecreasing}
	Given $\epsilon_i$ in Algorithm \ref{alg:main} with $\epsilon_i\geq 0$, $i=1,2$,	there holds
	\begin{eqnarray*} \setlength\abovedisplayskip{3pt}
		\setlength\abovedisplayshortskip{3pt}
		\setlength\belowdisplayskip{3pt}
		\setlength\belowdisplayshortskip{3pt}
		&&\infty>H^{\infty}\geq \cdots\geq	H(\mathbf u^{k+1}_{1,1},\ldots,\mathbf u^{k+1}_{d,R},\boldsymbol{ \omega}^{k+1}) \geq 	H(\mathbf u^{k+1}_{1,1},\ldots,\mathbf u^{k+1}_{d,R},\boldsymbol{ \omega}^{k}) \\
		&\geq&  \cdots \geq H(\mathbf u^{k+1}_{1,1}\ldots,\mathbf u^{k+1}_{d-t,R},U^{k+1}_{d-t+1}, \ldots,   U^{k+1}_{j}, U^{k}_{j+1},  \ldots,U^{k}_d,\omega^k_i) \\
		&\geq &\cdots \geq H(\mathbf u^{k+1}_{1,1}, \ldots, \mathbf u^{k+1}_{j,i},\mathbf u^{k }_{j,i+1},\ldots,\mathbf u^k_{d-t,R},  U^k_{d-t+1},\ldots, U^k_d, \omega^k_i)\\
		&\geq& \cdots\geq H(\mathbf u^{k}_{1,1},\ldots,\mathbf u^{k}_{d,R},\boldsymbol{ \omega}^{k}) \geq \cdots \geq H(\mathbf u^{0}_{1,1},\ldots,\mathbf u^{0}_{d,R},\boldsymbol{ \omega}^{0}) >0.
	\end{eqnarray*}
\end{proposition}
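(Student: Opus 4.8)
The plan is to recognize that the displayed chain of inequalities is precisely the concatenation of the three per-block ascent estimates already derived, namely \eqref{eq:descent_normalization} for the normalization updates of $\mathbf u_{j,i}$ with $1\leq j\leq d-t$, \eqref{eq:descent_polar} for the polar-decomposition updates of $U_j$ with $d-t+1\leq j\leq d$, and \eqref{eq:descent_omega} for the $\boldsymbol{\omega}$ update. Each of these identities expresses the gain in $H(\cdot)$ incurred by updating a single block while all other blocks are held fixed, so the proof reduces to reading off the sign of each gain and then telescoping the updates in the exact order prescribed by Algorithm \ref{alg:main}.

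First I would verify nonnegativity of every gain coefficient. In \eqref{eq:descent_normalization} the factor $\frac{\bignorm{\mathbf v^{k+1}_{j,i}\omega^k_i + \epsilon_1\mathbf u^k_{j,i}}+\epsilon_1}{2}$ is nonnegative because $\epsilon_1\geq 0$ and a norm is nonnegative; in \eqref{eq:descent_polar} the factor is nonnegative since the $R$-th singular value $\lambda_R(\tilde V^{k+1}_j)\geq 0$ and the shift parameter satisfies $\epsilon_i\geq 0$; and in \eqref{eq:descent_omega} the factor $\frac{\bignorm{\boldsymbol{\sigma}^{k+1}}}{2}$ is plainly nonnegative. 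Multiplying each by the corresponding squared distance $\bignorm{\mathbf u^{k+1}_{j,i}-\mathbf u^k_{j,i}}^2$, $\|U^{k+1}_j-U^k_j\|_F^2$, or $\bignorm{\boldsymbol{\omega}^{k+1}-\boldsymbol{\omega}^k}^2$ shows that every individual block update can only increase, or leave unchanged, the value of $H$.

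Next I would assemble the chain. Starting from $H(\mathbf u^{k}_{1,1},\ldots,\mathbf u^{k}_{d,R},\boldsymbol{\omega}^{k})$ one sweeps through $i=1,\ldots,R$ and $j=1,\ldots,d-t$ applying \eqref{eq:descent_normalization}, then through $j=d-t+1,\ldots,d$ applying \eqref{eq:descent_polar}, and finally applies \eqref{eq:descent_omega}, arriving at $H(\mathbf u^{k+1}_{1,1},\ldots,\mathbf u^{k+1}_{d,R},\boldsymbol{\omega}^{k+1})$. Because consecutive intermediate points in this sweep differ in exactly one block and match the left- and right-hand sides of the corresponding ascent estimate, the inequalities compose to yield the full nondecreasing chain displayed in the statement; induction on $k$ then extends it back to the initializer.

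It remains to record the two boundary facts. The upper bound $H^{\infty}<\infty$ follows because the feasible set is compact, being a finite product of unit spheres for $1\leq j\leq d-t$, Stiefel manifolds for $d-t+1\leq j\leq d$, and the unit sphere $\bignorm{\boldsymbol{\omega}}=1$, while $H(\cdot)$ is continuous and hence bounded above there; a monotone nondecreasing sequence bounded above converges to a finite limit. The strict positivity $H(\mathbf u^{0}_{1,1},\ldots,\mathbf u^{0}_{d,R},\boldsymbol{\omega}^{0})>0$ may be assumed without loss of generality, as remarked just before the statement. I do not anticipate a genuine obstacle here; the only point demanding care is bookkeeping, namely ensuring that each mixed intermediate iterate is matched correctly to the two sides of the relevant ascent identity so that the telescoping is valid.
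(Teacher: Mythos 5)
Your proposal is correct and is exactly the argument the paper itself uses: Proposition \ref{prop:nondecreasing} is stated as an immediate consequence of chaining \eqref{eq:descent_normalization}, \eqref{eq:descent_polar} and \eqref{eq:descent_omega}, whose gain coefficients are all nonnegative, with $H^{\infty}<\infty$ from compactness of the feasible set and the initial value assumed positive without loss of generality. Nothing is missing.
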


Next, given a small but fixed scalar $\epsilon_0>0$ such that $\epsilon_1,\epsilon_2\geq \epsilon_0>0$, by combining \eqref{eq:descent_normalization}, \eqref{eq:descent_polar} and \eqref{eq:descent_omega} together, we arrive at our goal
\begin{theorem}
	\label{th:sufficient_dec_positive_epsilon}
	Let $\{\uomega{k}  \}$ be generated by Algorithm \ref{alg:main} started from any initializer $(\uomega{0} )$ with $\epsilon_1,\epsilon_2\geq \epsilon_0>0$. Denote $c=\sqrt{\sum^R_{i=1}(\sigma^0_i)^2 } = H(\mathbf u^0_{j,i},\omega^0_i)>0$. 
	Then there holds that for any $k$,
	\begin{small}
\begin{eqnarray*}
 &&	H(\uomega{k+1}) - H(\uomega{k}) \geq \frac{\epsilon_0}{2}\sum^d_{j=1}\sum^R_{i=1}\|\mathbf u^{k+1}_{j,i} - \mathbf u^k_{j,i}\|^2 + \frac{c}{2}\|\boldsymbol{ \omega}^{k+1}-\boldsymbol{ \omega}^k\|^2.
\end{eqnarray*}
\end{small}
\end{theorem}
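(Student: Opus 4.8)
The plan is to assemble the per-block gains recorded in \eqref{eq:descent_normalization}, \eqref{eq:descent_polar} and \eqref{eq:descent_omega} into a single lower bound for the full-sweep increase $H(\uomega{k+1})-H(\uomega{k})$, and then to bound each coefficient below by the advertised constants. Within one iteration Algorithm \ref{alg:main} updates the columns $\mathbf u_{j,i}$ for $1\leq j\leq d-t$ one at a time, then the orthonormal factors $U_j$ for $d-t+1\leq j\leq d$ one at a time, and finally $\boldsymbol{\omega}$; hence the chain of inequalities in Proposition \ref{prop:nondecreasing} telescopes, and the total gain is the sum of these intermediate gains. Using the equalities \eqref{eq:descent_normalization} and \eqref{eq:descent_omega} for the normalization and $\boldsymbol{\omega}$ blocks, and the lower bound \eqref{eq:descent_polar} (Lemma \ref{lem:polar_max_augmented_sufficient} applied with $\epsilon=\epsilon_2$) for each polar block, I would obtain
\begin{align*}
H(\uomega{k+1})-H(\uomega{k})
&\geq \sum_{j=1}^{d-t}\sum_{i=1}^{R}\frac{\|\mathbf v^{k+1}_{j,i}\omega^k_i+\epsilon_1\mathbf u^k_{j,i}\|+\epsilon_1}{2}\|\mathbf u^{k+1}_{j,i}-\mathbf u^k_{j,i}\|^2\\
&\quad+\sum_{j=d-t+1}^{d}\frac{\lambda_R(\tilde V^{k+1}_j)+\epsilon_2}{2}\|U^{k+1}_j-U^k_j\|_F^2+\frac{\|\boldsymbol{\sigma}^{k+1}\|}{2}\|\boldsymbol{\omega}^{k+1}-\boldsymbol{\omega}^k\|^2 .
\end{align*}

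Next I would bound the coefficients of the factor terms from below. In the first double sum, nonnegativity of the norm gives $(\|\mathbf v^{k+1}_{j,i}\omega^k_i+\epsilon_1\mathbf u^k_{j,i}\|+\epsilon_1)/2\geq \epsilon_1/2\geq \epsilon_0/2$. In the second sum, $\lambda_R(\tilde V^{k+1}_j)\geq 0$ forces the coefficient to be at least $\epsilon_2/2\geq\epsilon_0/2$; moreover, since $U_j=[\mathbf u_{j,1},\ldots,\mathbf u_{j,R}]$ the squared Frobenius norm decouples over columns, so $\|U^{k+1}_j-U^k_j\|_F^2=\sum_{i=1}^{R}\|\mathbf u^{k+1}_{j,i}-\mathbf u^k_{j,i}\|^2$. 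Combining the two sums therefore yields the block $\frac{\epsilon_0}{2}\sum_{j=1}^{d}\sum_{i=1}^{R}\|\mathbf u^{k+1}_{j,i}-\mathbf u^k_{j,i}\|^2$, which is the first term on the right-hand side of the claim.

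The only step requiring a short argument is the coefficient $\|\boldsymbol{\sigma}^{k+1}\|/2$ of the $\boldsymbol{\omega}$ term, for which I must show $\|\boldsymbol{\sigma}^{k+1}\|\geq c$. Here I would use that line 16 of Algorithm \ref{alg:main} sets $\boldsymbol{\omega}^{k+1}=\boldsymbol{\sigma}^{k+1}/\|\boldsymbol{\sigma}^{k+1}\|$, which is precisely the maximizer noted just before Proposition \ref{prop:connection_sigma_omega}; evaluating the cost function at the current iterate then gives the identity $H(\uomega{k+1})=\sum_{i=1}^{R}\sigma^{k+1}_i\omega^{k+1}_i=\|\boldsymbol{\sigma}^{k+1}\|$, and likewise $H(\uomega{0})=\|\boldsymbol{\sigma}^0\|=c$. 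The monotonicity chain of Proposition \ref{prop:nondecreasing} then yields $\|\boldsymbol{\sigma}^{k+1}\|=H(\uomega{k+1})\geq H(\uomega{0})=c$, so $\|\boldsymbol{\sigma}^{k+1}\|/2\geq c/2$. Substituting the three coefficient bounds into the display gives the claimed inequality. I expect this last bound to be the only nonroutine point, since it is where the auxiliary variable $\boldsymbol{\omega}$, the normalization identity $H=\|\boldsymbol{\sigma}\|$, and the global monotonicity must be invoked together, whereas the remaining steps are direct substitutions.
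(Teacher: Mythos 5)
Your proposal is correct and follows essentially the same route as the paper, which proves the theorem simply by combining \eqref{eq:descent_normalization}, \eqref{eq:descent_polar} and \eqref{eq:descent_omega} and lower-bounding the coefficients by $\epsilon_0/2$ and $c/2$. You in fact supply the one detail the paper leaves implicit — that $\|\boldsymbol{\sigma}^{k+1}\| = H(\uomega{k+1}) \geq H(\uomega{0}) = c$ via the normalization identity and Proposition \ref{prop:nondecreasing} — and you correctly read the coefficient in \eqref{eq:descent_polar} as $\epsilon_2$ (the paper's "$\epsilon_1$" there is a typo), so nothing is missing.
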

The case that $\epsilon_1=\epsilon_2=0$ needs more discussions in the next subsection.

\subsection{Diminishing of  $\| \mathbf u^k_{j,i}- \mathbf u^{k+1}_{j,i}\| $ when $\epsilon_i=0$}


First we consider $\epsilon_1=0$. In this case, we have to lower bound the coefficient $\|\mathbf v^{k+1}_{j,i}\omega^k_i + \epsilon_1\mathbf u^k_{j,i}\| = \bigfnorm{ \mathbf v^{k+1}_{j,i} \omega^k_i}$. Recall that when $\epsilon_1=0$, by the definition of $\mathbf u^{k+1}_{j,i}$, we have for each $i$ and for $1\leq j\leq d-t$,
\begin{small}
\[ 
\|\mathbf v^{k+1}_{j,i}\| = \innerprod{\mathbf v^{k+1}_{j,i}}{\mathbf u^{k+1}_{j,i}} = \mathcal A\otimes\mathbf u^{k+1}_{1,i}\otimes\cdots\otimes\mathbf u^{k+1}_{j,i}\otimes\mathbf u^k_{j+1,i }\otimes\cdots\otimes\mathbf u^k_{d,i} \geq \cdots\geq\mathcal A \bigotimesuiterate{k} = \sigma_i^k.
\] 
\end{small}
By Proposition \ref{prop:nondecreasing} and the definition of $\boldsymbol{ \omega}^k$, we have $\sum^R_{i=1}(\sigma^k_i)^2\leq    \sum^R_{i=1}(\sigma^{k+1}_i)^2\leq \cdots \leq (H^\infty)^2< \infty$. Thus
\begin{equation}\label{eq:lower_bound_v_omega}			      
\bigfnorm{ \mathbf v^{k+1}_{j,i} \omega^k_i }\geq (\sigma^k_i)^2/H^\infty.
\end{equation}
It remains to estimate $|\sigma^k_i|$. If it  is uniformly bounded away from zero, then we are done. Otherwise, now the trouble is that the nondecreasing property of $\{ \sum^R_{i=1}(\sigma^k_i)^2 \}$  does not necessarily imply that each 
  sequence $\{ \sigma^k_{i} \}_{k=0}^\infty$ is also nondecreasing. This is due to the way of updating    $U_j,d-t+1\leq j\leq d$ that computes $  \mathbf u_{j,1} \ldots,\mathbf u_{j,R}$ simultaneously.
 Nevertheless, we     discuss that this will not be an issue. Assume that, for example,   there is a subsequence   $\{| \sigma^{k_l}_1 | \}_{l=1}^\infty\rightarrow 0$. Passing to the subsequence if necessary, we assume that $\{\mathbf u^{k_l}_{j,i}  \}_{l=1}^\infty \rightarrow \mathbf u^*_{j,i}$. Thus $\sigma^*_1 = \innerprod{\mathcal A}{\bigotimes_{j=1}^d\mathbf u^*_{j,1}}=0$. Recall that the objective function is given by $\sum^R_{i=1}\omega_i\sigma_i$. Thus $\sigma^*_1=0$ means that the variables $\mathbf u_{j,1},1\leq  j\leq d$     contributes nothing to the objective function, and can   be considerably removed. In this case, it is possible that the parameter $R$ is chosen too large, and one should reduce it as $R\leftarrow R-1$.
    As a result, one should first reduce $R$ such that the degeneracy at $\sigma_i$ does not occur. In this sense, we can always assume that $\{|\sigma^k_i| \}_{k=0}^\infty$ is uniformly lower bounded away from zero for all $i$ by a positive constant $c_1>0$, and so \eqref{eq:lower_bound_v_omega} together with \eqref{eq:descent_normalization} yields
    \begin{small}
    \begin{eqnarray} \setlength\abovedisplayskip{3pt}
    \setlength\abovedisplayshortskip{3pt}
    \setlength\belowdisplayskip{3pt}
    \setlength\belowdisplayshortskip{3pt}
    &&H(\mathbf u^{k+1}_{1,1}, \ldots, \mathbf u^{k+1}_{j,i-1},\mathbf u^{k+1}_{j,i},\mathbf u^{k }_{j,i+1},\ldots,\mathbf u^k_{d-t,R},\ldots,\mathbf u^k_{d,R},\omega^k_i)\nonumber\\
    &&~~~~~~~~~~~~~~~~~~~~  - H(\mathbf u^{k+1}_{1,1}, \ldots, \mathbf u^{k+1}_{j,i-1},\mathbf u^{k }_{j,i },\mathbf u^{k }_{j,i+1 },\ldots,\mathbf u^k_{d-t,R},\ldots,\mathbf u^k_{d,R},\omega^k_i)\nonumber\\
    &=& \frac{  c_1 }{2}\bignorm{ \mathbf u^{k+1}_{j,i} - \mathbf u^k_{j,i}  }^2,~\forall k.  \label{eq:descent_normalization_e1is0}
	    \end{eqnarray}
	    \end{small}
  
We then consider $\epsilon_2=0$. In this case, $\tilde V^k_j = V^k_j$ in the algorithm, and the coefficient in \eqref{eq:descent_polar} is $\lambda_R(V^{k+1}_j)$, the $R$-th largest singular value of $V^{k+1}_j$. The issue now is to lower bound this value.  Note that $\lambda_R(V^{k+1}_j)>0$ is equivalent to that $V^{k+1}_j$ has full column rank. This is the assumption made in \cite[Theorem 5.7]{chen2009tensor}. In what follows, we wish to weaken such an assumption. Since $\{\uomega{k}  \}_{k=0}^\infty$ is bounded, limit points exist.  Assume that $(\uomega{*}  )$ is a limit point; define $V^*_j$ following the same vein  as $V^k_j$ in Algorithm \ref{alg:main}. In addition, denote 
$$			      \setlength\abovedisplayskip{3pt}
\setlength\abovedisplayshortskip{3pt}
\setlength\belowdisplayskip{3pt}
\setlength\belowdisplayshortskip{3pt} \mathbb B_\alpha(\mathbf u^*_{j,i},\omega^*_i) := \{ (\mathbf u_{j,i},\omega_i) \mid \bignorm{ (\mathbf u_{j,i},\omega_i)  -(\mathbf u^*_{j,i},\omega^*_i) }\leq \alpha   \}.$$
We first present a lemma that will be used in the following and in Section \ref{sec:proof:global_conv_zero_e}.
\begin{lemma}
	\label{lem:sufficient_dec_zero_e_in_a_ball}
		Let $\{\uomega{k}  \}$ be generated by Algorithm \ref{alg:main} started from any initializer $(\uomega{0} )$ with $\epsilon_1=\epsilon_2=  0$. Assume that there is a limit point $(\uomega{*})$, such that $V^*_j$ has full column rank, $j=d-t+1,\ldots,d$. Then there exists constants $\alpha_0>0$ and $c_2>0$, such that for $\forall  \bar k$ with $(\uomega{\bar k}) \in \ball{\alpha_0}$,    we have that for all $j = d-t+1,\ldots,d$,
		\begin{small}
\begin{eqnarray*}			      \setlength\abovedisplayskip{3pt}
	\setlength\abovedisplayshortskip{3pt}
	\setlength\belowdisplayskip{3pt}
	\setlength\belowdisplayshortskip{3pt}
&&	H( U^{\bar k+1}_1,\ldots, U^{\bar k+1}_{j-1},U^{\bar k+1}_{j} , U^{\bar k }_{j+1}, \ldots, U^{\bar k}_d,\omega^{\bar k}_i)    - H( U^{\bar k}_1,\ldots,U^{\bar k+1}_{j-1}, U^{\bar k}_j,U^{\bar k}_{j+1},\ldots, U^{\bar k}_d,\omega^{\bar k}_i  ) \\
& \geq& \frac{c_2}{4}\bigfnorm{  U^{\bar k+1}_j - U^{\bar k}_j  }^2.
\end{eqnarray*}
\end{small}
\end{lemma}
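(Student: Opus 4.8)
The plan is to start from the one–step descent inequality \eqref{eq:descent_polar}, which with $\epsilon_1=\epsilon_2=0$ reduces the coefficient to the $R$-th singular value:
\[
H(\ldots,U^{\bar k+1}_j,\ldots) - H(\ldots,U^{\bar k}_j,\ldots) \geq \frac{\lambda_R(\tilde V^{\bar k+1}_j)}{2}\bigfnorm{U^{\bar k+1}_j - U^{\bar k}_j}^2,\quad \tilde V^{\bar k+1}_j = V^{\bar k+1}_j\Omega^{\bar k},
\]
with $\Omega^{\bar k}=\mathrm{diag}(\omega^{\bar k}_1,\ldots,\omega^{\bar k}_R)$. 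Thus the whole statement collapses to exhibiting $\alpha_0,c_2>0$ for which $\lambda_R(\tilde V^{\bar k+1}_j)\geq c_2/2$ whenever $(\uomega{\bar k})\in\ball{\alpha_0}$, uniformly over $j=d-t+1,\ldots,d$; the announced factor $c_2/4$ is then just $\tfrac12\cdot\tfrac{c_2}{2}$.

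First I would peel off the diagonal weights. Since $\Omega^{\bar k}$ is square, $\lambda_R(V^{\bar k+1}_j\Omega^{\bar k})\geq \lambda_R(V^{\bar k+1}_j)\cdot\min_i|\omega^{\bar k}_i|$, and because we have already arranged $|\sigma^k_i|\geq c_1>0$ while Proposition \ref{prop:nondecreasing} gives $\bignorm{\boldsymbol{\sigma}^k}\leq H^\infty$, the weights obey $\min_i|\omega^{\bar k}_i|=(\min_i|\sigma^{\bar k}_i|)/\bignorm{\boldsymbol{\sigma}^{\bar k}}\geq c_1/H^\infty>0$ for all $\bar k$. Hence it suffices to bound $\lambda_R(V^{\bar k+1}_j)$ away from zero on a small ball, and I may absorb the factor $c_1/H^\infty$ into $c_2$.

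The core is then a continuity/perturbation estimate. I would regard $V^{\bar k+1}_j$ as the value of a fixed map applied to the input iterate $(\uomega{\bar k})$: its columns are the tensor–vector products $\gradAuji$ evaluated at the already-updated factors for $l<j$ and the old factors for $l>j$. This map is continuous near $(\uomega{*})$: the normalization substeps are well defined because $|\sigma_i|\geq c_1>0$, and the polar substeps are continuous because, by hypothesis, the relevant matrices have full column rank there, so their polar factor is unique (Theorem \ref{th:polar_dec}). As singular values are $1$-Lipschitz, once I establish $V^{\bar k+1}_j\to V^*_j$ as $(\uomega{\bar k})\to(\uomega{*})$, I can shrink the ball so that $\lambda_R(V^{\bar k+1}_j)\geq\tfrac12\lambda_R(V^*_j)$; then $c_2:=(c_1/H^\infty)\min_{d-t+1\leq j\leq d}\lambda_R(V^*_j)>0$ (full column rank of each $V^*_j$ forces $\lambda_R(V^*_j)>0$), and $\alpha_0$ the smallest of the radii, complete the argument.

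The hard part will be precisely the convergence $V^{\bar k+1}_j\to V^*_j$, because $V^{\bar k+1}_j$ is assembled from \emph{partially updated} factors, whereas $V^*_j$ is built from the limit-point values; naive continuity only sends $V^{\bar k+1}_j$ to the matrix obtained by sweeping once from $(\uomega{*})$, which need not be $V^*_j$ unless the limit point is componentwise fixed. To close this gap I would exploit that the non-orthonormal blocks and $\boldsymbol{\omega}$ already possess globally diminishing successive differences: summing \eqref{eq:descent_normalization_e1is0} and \eqref{eq:descent_omega} and telescoping against the bounded nondecreasing sequence of Proposition \ref{prop:nondecreasing} gives $\sum_k\bignorm{\mathbf u^{k+1}_{l,i}-\mathbf u^k_{l,i}}^2<\infty$ for $l\leq d-t$ and $\sum_k\bignorm{\boldsymbol{\omega}^{k+1}-\boldsymbol{\omega}^k}^2<\infty$, so along any subsequence approaching $(\uomega{*})$ the updated non-orthonormal factors coincide in the limit with $\mathbf u^*_{l,i}$. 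This settles $j=d-t+1$ outright, since that block's sweep matrix depends only on updated non-orthonormal factors and old orthonormal ones. For $j>d-t+1$ I would proceed by induction on the orthonormal blocks: the full-rank hypothesis on $V^*_{d-t+1},\ldots,V^*_{j-1}$ yields the per-block sufficient decrease just proved, whose telescoping forces $\bignorm{U^{k+1}_l-U^k_l}\to0$ for those blocks, so the updated $U^{\bar k+1}_l$ again converge to $U^*_l$ and $V^{\bar k+1}_j\to V^*_j$ follows. Carrying this bootstrap cleanly, and keeping the shrinking balls uniform across all $j$, is the main technical obstacle.
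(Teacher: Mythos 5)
Your proposal follows essentially the same route as the paper's proof: lower-bound $\lambda_R$ of the sweep matrices by continuity near the limit point where the $V^*_j$ have full column rank, settle the block $j=d-t+1$ via the globally diminishing differences of the non-orthonormal factors and $\boldsymbol{\omega}$, and then bootstrap over the orthonormal blocks, using the sufficient decrease already secured for earlier blocks together with the bounded, nondecreasing objective to keep the partially updated iterates inside the neighborhood (the paper makes this quantitative via the bound $\sum_j\bigfnorm{U^{\bar k+1}_j-U^{\bar k}_j}^2\leq \tfrac{4t}{c_2}\bigl(H^\infty-H(\uomega{\bar k})\bigr)$ and the choice of $\alpha_0$). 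The one genuine difference is in your favor: you explicitly peel off the diagonal weight matrix via $\lambda_R(V\Omega^{\bar k})\geq\lambda_R(V)\min_i|\omega^{\bar k}_i|\geq\lambda_R(V)\,c_1/H^\infty$, whereas the paper silently identifies $\tilde V^{k+1}_j$ with $V^{k+1}_j$ and bounds $\lambda_R(V^{\bar k+1}_j)$ alone, so your version closes a small gap in the printed argument.
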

\begin{proof}
	The proof uses a similar argument as that of \cite[Theorem 3.2]{yang2016convergence}. Since $V^*_j$'s are of full column rank, we can assume that there is a positive constant $c_2>0$ such that
$\lambda_R(V^*_j)\geq c_2>0$ for  $j=d-t+1,\ldots,d$.   By the definition of $V_j$, there must exist a positive number $\bar\alpha>0$ and a ball $\mathbb B_{\bar\alpha}(\mathbf u^*_{j,i},\omega^*_i) $, such that for all $(\mathbf u_{j,i},\omega_i) \in \mathbb B_{\bar\alpha}(\mathbf u^*_{j,i},\omega^*_i)$, 
$\lambda_R( V_j ) \geq c_2/2>0$. In the following,  we write $(\mathbf u_{j,i},\omega_i) = ( \mathbf u_{1,1},\ldots,\mathbf u_{d,R},\omega_i ) := (U_1,\ldots,U_d,\omega_i)$.

 Let $\alpha_0$ and $\hat k$ be such that
\begin{equation}\label{eq:proof:0}			   
\alpha_0 < \frac{\bar\alpha}{2},~{\rm and}~ H^\infty-H(\uomega{k})    < \frac{\alpha_0^2c_2}{4t},~\forall~ k\geq \hat k .
\end{equation}  
Such $\alpha_0$ is well-defined because of the nondecreasing property of the objective value.  Since $(\uomega{*})$ is a limit point, the existence of   $\bar k$ with  $(\uomega{\bar k}) \in \ball{\alpha_0}$ makes sense. From \eqref{eq:descent_normalization_e1is0}, $\bignorm{\mathbf u^{k+1}_{j,i} - \mathbf u^k_{j,i}}\rightarrow 0$, $1\leq j\leq d-t$; so without loss of generality we   assume that $(U^{\bar k+1}_1,\ldots,U^{\bar k+1}_{d-t}, U^{\bar k}_{d-t+1},\ldots,U^{\bar k}_{d},\omega^{\bar k}_i  ) \in \ball{\alpha_0}$ as well, otherwise we can increase $\bar k$ or decrease $\alpha_0$ until it holds.
 In the following, we inductively show that if $(\uomega{\bar k})\in \ball{\alpha_0}$, then for all $j=d-t+1,\ldots,d$, 
\begin{equation}\label{eq:proof:00}			     
 (  U^{\bar k+1}_{ 1},\ldots,  U^{\bar k+1}_{j-1}, U^{\bar k}_{j },\ldots,U^{\bar k}_{d},\omega^{\bar k}_i  ) \in \ball{\bar\alpha},
\end{equation}
based on which we can obtain the inequality in question. Now $j=d-t+1$ clearly holds by the discussions above. Assume that it holds for $j=d-t+1,\ldots,m<d$. According to the definitions of $\ball{\bar\alpha}$ and $V^{\bar k+1}_j$, we have  $\lambda_R(V^{\bar k+1}_j) \geq \frac{c_2}{2}$. It thus similar to \eqref{eq:descent_polar} that for $j=d-t+1,\ldots,m$,
\begin{eqnarray}
\label{eq:proof:000} \setlength\abovedisplayskip{3pt}
\setlength\abovedisplayshortskip{3pt}
\setlength\belowdisplayskip{3pt}
\setlength\belowdisplayshortskip{3pt}
&&\frac{c_2}{4}\bignorm{ U^{\bar k+1}_j - U^{\bar k}_j }^2_F \leq     H( U^{\bar k+1}_1,\ldots, U^{\bar k+1}_{m-1} , U^{\bar k+1}_m,U^{\bar k}_{m+1},\ldots, U^{\bar k}_d,\omega^{\bar k}_i) \nonumber\\
	&& ~~~~~~~~~~~~~~~~~~~~~~~~~~~~~~- H( U^{\bar k}_1,\ldots,U^{\bar k+1}_{m-1}, U^{\bar k}_m,\ldots, U^{\bar k}_d,\omega^{\bar k}_i  )   .
\end{eqnarray}
In the following we verify \eqref{eq:proof:00} when $j=m+1$. We have
\begin{eqnarray*} \setlength\abovedisplayskip{3pt}
	\setlength\abovedisplayshortskip{3pt}
	\setlength\belowdisplayskip{3pt}
	\setlength\belowdisplayshortskip{3pt}
&&\bignorm{  ( U^{\bar k+1}_1,\ldots, U^{\bar k+1}_m, U^{\bar k}_{m+1},\ldots, U^{\bar k}_d,\omega^{\bar k}_i   ) - (\uomega{*})    }_F \\
&\leq& \bignorm{( U^{\bar k+1}_1,\ldots, U^{\bar k+1}_m, U^{\bar k}_{m+1},\ldots, U^{\bar k}_d,\omega^{\bar k}_i   )  -   ( U^{\bar k+1}_1,\ldots, U^{\bar k+1}_{d-t}, U^{\bar k}_{d-t+1},\ldots, U^{\bar k}_d,\omega^{\bar k}_i        )}_F \\
&&~~~~~~~~~~~~~~~~+ \bignorm{    ( U^{\bar k+1}_1,\ldots, U^{\bar k+1}_{d-t}, U^{\bar k}_{d-t+1},\ldots, U^{\bar k}_d,\omega^{\bar k}_i   ) - (\uomega{*})   }_F\\
&\leq& \sqrt{ \sum_{j=d-t+1}^{m } \bignorm{ U^{\bar k+1}_j - U^{\bar k}_j  }^2_F  }+ \alpha_0\\
&\leq& \sqrt{  \frac{4t}{c_2}\bigxiaokuohao{ H^\infty - H(\uomega{k})   }  } + \alpha_0 < \bar\alpha,
\end{eqnarray*}
where the second inequality is due to \eqref{eq:proof:000} and Proposition \ref{prop:nondecreasing}, and the last one obeys \eqref{eq:proof:0}. Thus induction method tells us that for $j=d-t+1,\ldots,d$, \eqref{eq:proof:00} holds. On the other hand, \eqref{eq:proof:0} together with  the definition of $\ball{\bar \alpha}$ and \eqref{eq:descent_polar} gives that \eqref{eq:proof:000} holds for all 
$j=d-t+1,\ldots,d$, provided that $(\uomega{\bar k})\in\ball{\alpha_0}$. The proof has been completed.
\end{proof}

Combining the above lemma with \eqref{eq:descent_normalization_e1is0} and \eqref{eq:descent_omega} (note that the latter two hold  for all $k$), we have the following:
\begin{theorem}
	\label{th:sufficient_dec_epsilon0}
	Under the setting of Lemma \ref{lem:sufficient_dec_zero_e_in_a_ball},   there exists constants $\alpha_0>0$ and $c_3>0$, such that for $\forall  \bar k$ with $(\uomega{\bar k}) \in \ball{\alpha_0}$,    we have 
	\begin{small}
\begin{equation*}	 \setlength\abovedisplayskip{3pt}
\setlength\abovedisplayshortskip{3pt}
\setlength\belowdisplayskip{3pt}
\setlength\belowdisplayshortskip{3pt}
\begin{split}
	H(\uomega{\bar k+1}) - H(\uomega{\bar k}) 
\geq \frac{c_3}{2}\sum^d_{j=1}\sum^R_{i=1}\|\mathbf u^{{\bar k}+1}_{j,i} - \mathbf u^{\bar k}_{j,i}\|^2+ \frac{c_3}{2}\|\boldsymbol{ \omega}^{\bar k+1}-\boldsymbol{ \omega}^{\bar k}\|^2.
\end{split}
\end{equation*}
\end{small}
\end{theorem}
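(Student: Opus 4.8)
The plan is to derive the stated estimate by telescoping the total gain $H(\uomega{\bar k+1}) - H(\uomega{\bar k})$ along the block updates of Algorithm \ref{alg:main}, following the very chain displayed in Proposition \ref{prop:nondecreasing}, and then to bound each incremental gain from below by a common positive multiple of the associated squared successive difference. Concretely, I would split the total gain into three groups: the updates of $\mathbf u_{j,i}$ for $1\leq j\leq d-t$ and $1\leq i\leq R$, the updates of the orthonormal factors $U_j$ for $d-t+1\leq j\leq d$, and the final update of $\boldsymbol\omega$. Because the three inequalities I intend to invoke are each stated for precisely one such intermediate step, the only work is to line them up correctly and add them.

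For each $1\leq j\leq d-t$ and each $i$, the gain of the corresponding step is controlled from below by \eqref{eq:descent_normalization_e1is0}, which under the standing assumption that $\{|\sigma^k_i|\}$ is uniformly bounded away from zero gives a bound of the form $\frac{c_1}{2}\bignorm{\mathbf u^{\bar k+1}_{j,i}-\mathbf u^{\bar k}_{j,i}}^2$. For the orthonormal factors I would invoke Lemma \ref{lem:sufficient_dec_zero_e_in_a_ball} directly: since $(\uomega{\bar k})\in\ball{\alpha_0}$, it supplies, for every $d-t+1\leq j\leq d$, the lower bound $\frac{c_2}{4}\bigfnorm{U^{\bar k+1}_j-U^{\bar k}_j}^2=\frac{c_2}{4}\sum^R_{i=1}\bignorm{\mathbf u^{\bar k+1}_{j,i}-\mathbf u^{\bar k}_{j,i}}^2$, the lemma having already absorbed the internal induction that keeps all intermediate iterates inside the larger ball $\ball{\bar\alpha}$. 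Finally, the $\boldsymbol\omega$-step is handled by \eqref{eq:descent_omega}, contributing $\frac{\bignorm{\boldsymbol\sigma^{\bar k+1}}}{2}\bignorm{\boldsymbol\omega^{\bar k+1}-\boldsymbol\omega^{\bar k}}^2$.

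To convert the $\boldsymbol\omega$-coefficient into a fixed positive constant, I would use that the update $\boldsymbol\omega^{k}=\boldsymbol\sigma^{k}/\bignorm{\boldsymbol\sigma^{k}}$ forces $H(\uomega{k})=\bignorm{\boldsymbol\sigma^{k}}$; combined with the monotonicity of Proposition \ref{prop:nondecreasing} this yields $\bignorm{\boldsymbol\sigma^{\bar k+1}}=H(\uomega{\bar k+1})\geq H(\uomega{0})=c>0$, with $c$ as in Theorem \ref{th:sufficient_dec_positive_epsilon}. Summing the three groups and setting $c_3:=\min\bigxiaokuohao{c_1,\,c_2/2,\,c}$ then delivers the claim, since $\frac{c_1}{2}\geq\frac{c_3}{2}$, $\frac{c_2}{4}\geq\frac{c_3}{2}$, and $\frac{c}{2}\geq\frac{c_3}{2}$. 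The only genuine delicate point, and the step I would treat most carefully, is the bookkeeping of the telescoping: matching each intermediate evaluation of $H$ exactly to the left-hand sides of \eqref{eq:descent_normalization_e1is0}, Lemma \ref{lem:sufficient_dec_zero_e_in_a_ball} and \eqref{eq:descent_omega}, and confirming that the single hypothesis $(\uomega{\bar k})\in\ball{\alpha_0}$ is all that is required, with every remaining ball-membership condition inherited through the lemma's own induction rather than re-proved here.
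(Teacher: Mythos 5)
Your proposal is correct and follows essentially the same route as the paper, which simply states that the theorem follows by combining Lemma \ref{lem:sufficient_dec_zero_e_in_a_ball} with \eqref{eq:descent_normalization_e1is0} and \eqref{eq:descent_omega}; your telescoping, the choice $c_3=\min(c_1,c_2/2,c)$, and the observation that $\bignorm{\boldsymbol\sigma^{\bar k+1}}=H(\uomega{\bar k+1})\geq c>0$ are exactly the details the paper leaves implicit.
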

\begin{remark}\label{rmk:1}
	
	1. The proof of Lemma \ref{lem:sufficient_dec_zero_e_in_a_ball} indicates that the above inequality cannot be extended to $k=\bar k+1,\bar k+2,\ldots$ directly, unless further information can be incorporated.
	
	2. It can be seen that the above inequality holds for all subsequences sufficiently close to $(\uomega{*})$.
	
3. The assumption is not strong in practice, as it is very often observed that $V^*_j$'s, $d-t+1\leq j\leq d$ have full column rank.
\end{remark}

Comparing with Theorem \ref{th:sufficient_dec_positive_epsilon}, the above results are weaker, due to the absence of $\epsilon_2$. Nevertheless, in what follows we show that the above results suffice  for the  convergence.



\subsection{Converging to a KKT point}
 First we consider $\epsilon_1,\epsilon_1\geq \epsilon_0>0$.
\begin{theorem}
	\label{th:sub_convergence_positive_e}
		Let $\{\mathbf u^k_{j,i},\omega^k_i  \}$ be generated by Algorithm \ref{alg:main} started from any initializer $(\mathbf u^0_{j,i},\omega^0_i )$ with $\epsilon_1,\epsilon_2\geq \epsilon_0>0$ for solving \eqref{prob:ortho_main_max} where $1\leq t\leq d$. Then every limit point  is a KKT point in the sense of \eqref{prob:kkt_main_max} or \eqref{prob:kkt_main_max_omega}.
\end{theorem}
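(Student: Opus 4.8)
The plan is to run the standard two-stage argument for block-coordinate schemes: first establish asymptotic regularity (the successive differences vanish), then pass to the limit in the update maps and read off \eqref{prob:kkt_main_max}/\eqref{prob:kkt_main_max_omega}. First I would invoke Proposition \ref{prop:nondecreasing}: the scalar sequence $\{H(\uomega{k})\}$ is nondecreasing and bounded above by $H^\infty<\infty$, hence convergent. Summing the sufficient-decrease estimate of Theorem \ref{th:sufficient_dec_positive_epsilon} over $k$ telescopes the left-hand side and leaves a finite bound,
\[
\sum_{k=0}^{\infty}\Big(\tfrac{\epsilon_0}{2}\sum_{j,i}\bignorm{\mathbf u^{k+1}_{j,i}-\mathbf u^k_{j,i}}^2+\tfrac{c}{2}\bignorm{\boldsymbol{\omega}^{k+1}-\boldsymbol{\omega}^k}^2\Big)\le H^\infty-H(\uomega{0})<\infty ,
\]
so that $\bignorm{\mathbf u^{k+1}_{j,i}-\mathbf u^k_{j,i}}\to0$ for every $j,i$ and $\bignorm{\boldsymbol{\omega}^{k+1}-\boldsymbol{\omega}^k}\to0$. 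Since the feasible set (products of unit spheres and Stiefel manifolds, together with the unit sphere for $\boldsymbol{\omega}$) is compact, limit points exist; I fix one, $(\uomega{*})$, along a subsequence $(\uomega{k_l})\to(\uomega{*})$. The decisive consequence of the vanishing successive differences is that the neighbouring iterates collapse onto the \emph{same} limit, i.e.\ $\mathbf u^{k_l+1}_{j,i}\to\mathbf u^*_{j,i}$ and $\boldsymbol{\omega}^{k_l+1}\to\boldsymbol{\omega}^*$, and by multilinearity $\mathbf v^{k_l+1}_{j,i}\to\mathbf v^*_{j,i}:=\mathcal A\bigotimes_{l\neq j}^d\mathbf u^*_{l,i}$.

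It then remains to take limits in the two kinds of updates. For $1\le j\le d-t$, line 5 reads $\bignorm{\tilde{\mathbf v}^{k+1}_{j,i}}\,\mathbf u^{k+1}_{j,i}=\mathbf v^{k+1}_{j,i}\omega^k_i+\epsilon_1\mathbf u^k_{j,i}$; passing to the limit along $k_l$ shows that $\mathbf v^*_{j,i}\omega^*_i$ is a scalar multiple of $\mathbf u^*_{j,i}$, and taking the inner product with the unit vector $\mathbf u^*_{j,i}$ identifies that scalar as $\omega^*_i\sigma^*_i$, where $\sigma^*_i=\innerprod{\mathbf v^*_{j,i}}{\mathbf u^*_{j,i}}$. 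Hence $\mathbf v^*_{j,i}=\sigma^*_i\mathbf u^*_{j,i}$ whenever $\omega^*_i\neq0$, which is the first block of the KKT system. For $d-t+1\le j\le d$ I would sidestep the non-uniqueness of the polar factor by working through the variational characterization of Lemma \ref{lem:polar_max}: each $U^{k+1}_j$ maximizes ${\rm tr}(X^\top\tilde V^{k+1}_j)$ over $X^\top X=I$, and since $\tilde V^{k+1}_j=V^{k+1}_j\Omega^k+\epsilon_2U^k_j\to V^*_j\Omega^*+\epsilon_2U^*_j=:\tilde V^*_j$ while $U^{k_l+1}_j\to U^*_j$, the inequality ${\rm tr}((U^{k_l+1}_j)^\top\tilde V^{k_l+1}_j)\ge{\rm tr}(X^\top\tilde V^{k_l+1}_j)$ survives in the limit. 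Thus $U^*_j$ is itself a maximizer, and the converse part of Lemma \ref{lem:polar_max} supplies a positive semidefinite $H^*_j$ with $\tilde V^*_j=U^*_jH^*_j$. Rearranging, $V^*_j\Omega^*=U^*_j(H^*_j-\epsilon_2 I)$, so $\Lambda_j:=H^*_j-\epsilon_2I$ is symmetric and gives exactly the orthonormal block of \eqref{prob:kkt_main_max_omega}; scaling columnwise by $\|\boldsymbol{\sigma}^*\|$ recovers the form in \eqref{prob:kkt_main_max} (one checks the diagonal of the rescaled multiplier is $(\sigma^*_i)^2$). Finally, the limit of line 16 yields $\omega^*_i=\sigma^*_i/\|\boldsymbol{\sigma}^*\|$, and because $H^\infty=\|\boldsymbol{\sigma}^*\|\ge H(\uomega{0})>0$ the denominator is nonzero, so all the KKT relations are mutually consistent.

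The main obstacle I anticipate is the degenerate case $\sigma^*_i=0$ (equivalently $\omega^*_i=0$) for some index $i$: the first-block argument only delivers $\mathbf v^*_{j,i}=\sigma^*_i\mathbf u^*_{j,i}$ after dividing by $\omega^*_i$, and when $\omega^*_i=0$ the limiting identity degenerates to $0=0$ and says nothing about $\mathbf v^*_{j,i}$. I would resolve this exactly as in the discussion preceding \eqref{eq:descent_normalization_e1is0}: such an index contributes nothing to $H=\sum_i\sigma_i\omega_i$, so $R$ may be reduced by one without altering the problem, after which one may assume every $|\sigma^k_i|$ stays bounded away from zero and the derivation closes. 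A secondary point to verify carefully is the continuity behind $\mathbf v^{k_l+1}_{j,i}\to\mathbf v^*_{j,i}$: the vector $\mathbf v^{k+1}_{j,i}$ mixes factors at levels $k$ and $k+1$, but since both levels share the same limit this causes no difficulty.
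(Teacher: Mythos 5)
Your proposal is correct and follows essentially the same route as the paper's own proof: telescoping the sufficient-decrease inequality of Theorem \ref{th:sufficient_dec_positive_epsilon} against the bounded nondecreasing sequence of Proposition \ref{prop:nondecreasing} to get vanishing successive differences, passing to the limit in the normalization step for $1\leq j\leq d-t$, and using the variational characterization and converse part of Lemma \ref{lem:polar_max} to extract the multiplier $H^*_j-\epsilon_2 I$ for the orthonormal blocks. Your explicit treatment of the degenerate index $\omega^*_i=0$ (via the $R$-reduction convention) is a point the paper's proof passes over silently, and is a reasonable addition rather than a deviation.
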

\begin{proof}
Let $(\mathbf u^*_{j,i},\omega^*_i)$ be a limit point of $\{ \mathbf u^k_{j,i},\omega^k_i    \}$ and let $\{\mathbf u^{k_l} _{j,i},\omega^{k_l}_i  \}_{l=1}^\infty\rightarrow (\mathbf u^*_{j,i},\omega^*_i)$ be a convergent subsequence.  From Theorem \ref{th:sufficient_dec_positive_epsilon} and Proposition \ref{prop:nondecreasing} we see that $\{\uomega{k_l+1}  \}_{l=1}^\infty\rightarrow (\mathbf u^*_{j,i})$ as well.   Taking the limit into lines 4 and 5 of Algorithm \ref{alg:main} yields that for $i=1,\ldots,R$ and $j=1,\ldots,d-t$,
\begin{equation}
\label{eq:sec:sub_conv:100}			      \setlength\abovedisplayskip{2pt}
\setlength\abovedisplayshortskip{2pt}
\setlength\belowdisplayskip{2pt}
\setlength\belowdisplayshortskip{2pt}
\mathbf v^*_{j,i} \omega^*_i = \mathcal A\mathbf u^*_{1,1} \otimes\cdots\otimes \mathbf u^*_{j-1,i}\otimes \mathbf u^*_{j+1,i}\otimes \cdots \otimes \mathbf u^*_{d,R} \cdot \omega^*_i = (\bignorm{ \tilde{\mathbf v}^*_{j,i} }-\epsilon_1 )\mathbf u^*_{j,i},
\end{equation}
where $\omega^*_i = \sigma^*_i/\bignorm{\boldsymbol{ \sigma}^*}$ with $\sigma^*_i = \mathcal A\bigotimesuiterate{^*}$, and
 $\tilde{\mathbf v}^*_{j,i} = \mathbf v^*_{j,i}\omega^*_i + \epsilon_1\mathbf u^*_{j,i}$, as that of Algorithm \ref{alg:main}. Simple calculation shows that $ \bignorm{ \tilde{\mathbf v}^*_{j,i} }-\epsilon_1 = \sigma^*_i\omega^*_i$. We then consider $j=d-t+1,\ldots,d$. From the algorithm and Lemma \ref{lem:polar_max} we have
 \[			      \setlength\abovedisplayskip{3pt}
 \setlength\abovedisplayshortskip{3pt}
 \setlength\belowdisplayskip{3pt}
 \setlength\belowdisplayshortskip{3pt}
 \innerprod{U^{k_l+1}_j}{ \tilde V^{k_l+1}_j } - \innerprod{ X  }{ \tilde V^{k_l+1}_j } \geq 0 ,~\forall X^\top X = I.
 \]
 By using Theorem \ref{th:sufficient_dec_positive_epsilon} again and by taking limit into the above relation,  we obtain 
 \[ \setlength\abovedisplayskip{3pt}
 \setlength\abovedisplayshortskip{3pt}
 \setlength\belowdisplayskip{3pt}
 \setlength\belowdisplayshortskip{3pt}
  \innerprod{U^{*}_j}{ \tilde V^{*}_j } - \innerprod{ X  }{ \tilde V^{*}_j } \geq 0 ,~\forall X^\top X = I,
 \]
 where $\tilde V^{*}_j =  V^*_j\Omega^* + \epsilon_2U^*_j$, with $\Omega^*={\rm diag}(\omega^*_1,\ldots,\omega^*_R )\in\mathbb R^{R\times R}$. The above inequality together with Lemma \ref{lem:polar_max} shows that for each $j$ there exists a   matrix $H^*_j$ such that $\tilde V^*_j = U^*_jH^*_j$, namely,
 \[			      \setlength\abovedisplayskip{2pt}
 \setlength\abovedisplayshortskip{2pt}
 \setlength\belowdisplayskip{2pt}
 \setlength\belowdisplayshortskip{2pt}
 V^*_j \Omega^* = U^*_j( H^*_j - \epsilon_2 I ). 
 \]
 Note that when writing in vector-wise form,  this is exactly the third equality of \eqref{prob:kkt_main_max_omega}  with respect to $\mathbf u^*_{j,i}$, except the difference in the coefficients. Finally, it is easy to see that $\boldsymbol{ \omega}^*$ satisfies \eqref{prob:ortho_main_max_omega}.  As a result, every limit point $(\mathbf u^*_{j,i},\omega^*)$ is a KKT point of  \eqref{prob:kkt_main_max_omega}, and $(\mathbf u^*_{j,i})$ is a KKT point of problem \eqref{prob:ortho_main_max}.
\end{proof}

When $\epsilon_1=\epsilon_2=0$, based on Theorem \ref{th:sufficient_dec_epsilon0} and item 2 of Remark \ref{rmk:1}, the results are stated as follows. The proof is similar to that of Theorem \ref{th:sub_convergence_positive_e} and is omitted.
\begin{theorem}
	\label{th:sub_convergence_zero_e}
		Let $\{\mathbf u^k_{j,i},\omega^k_i  \}$ be generated by Algorithm \ref{alg:main} started from any initializer $(\mathbf u^0_{j,i},\omega^0_i )$ with $\epsilon_1=\epsilon_2=  0$  for solving \eqref{prob:ortho_main_max} where $1\leq t\leq d$. If there is a limit point $(\mathbf u^*_{j,i},\omega^*_i)$, such that $V^*_j$ has full column rank, $j=d-t+1,\ldots,d$,   then it is a KKT point in the sense of \eqref{prob:kkt_main_max} or \eqref{prob:kkt_main_max_omega}.	
\end{theorem}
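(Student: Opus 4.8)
The plan is to follow the template of the proof of Theorem~\ref{th:sub_convergence_positive_e}, replacing the globally valid sufficient-decrease inequality of Theorem~\ref{th:sufficient_dec_positive_epsilon} by the \emph{localized} one of Theorem~\ref{th:sufficient_dec_epsilon0}, which is exactly what the full-column-rank hypothesis on $V^*_j$ buys us. Let $(\uomega{*})$ be the limit point in the statement and let $\{\uomega{k_l}\}_{l=1}^\infty\to(\uomega{*})$ be a subsequence realizing it. Because $H(\uomega{k})$ is nondecreasing and bounded above by $H^\infty<\infty$ (Proposition~\ref{prop:nondecreasing}), the scalar sequence $\{H(\uomega{k})\}$ converges to $H^\infty$; in particular both $H(\uomega{k_l})$ and $H(\uomega{k_l+1})$ tend to $H^\infty$, so their difference tends to $0$.

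First I would show that the successive differences vanish \emph{along the subsequence}. Since $(\uomega{k_l})\to(\uomega{*})$, for all large $l$ we have $(\uomega{k_l})\in\ball{\alpha_0}$, so Theorem~\ref{th:sufficient_dec_epsilon0} applies with $\bar k=k_l$ and yields
\[
H(\uomega{k_l+1})-H(\uomega{k_l})\geq \frac{c_3}{2}\sum^d_{j=1}\sum^R_{i=1}\bignorm{\mathbf u^{k_l+1}_{j,i}-\mathbf u^{k_l}_{j,i}}^2+\frac{c_3}{2}\bignorm{\boldsymbol{\omega}^{k_l+1}-\boldsymbol{\omega}^{k_l}}^2 .
\]
As the left-hand side tends to $0$, it follows that $\bignorm{\mathbf u^{k_l+1}_{j,i}-\mathbf u^{k_l}_{j,i}}\to0$ for every $j,i$ and $\bignorm{\boldsymbol{\omega}^{k_l+1}-\boldsymbol{\omega}^{k_l}}\to0$, whence $\{\uomega{k_l+1}\}\to(\uomega{*})$ as well. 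This is the precise analogue of the step in Theorem~\ref{th:sub_convergence_positive_e} where convergence of $\{\uomega{k_l}\}$ is upgraded to convergence of the shifted subsequence; the only novelty is that it is obtained solely along $k_l$, which is all a subsequential (limit-point) KKT statement requires.

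With both $\uomega{k_l}$ and $\uomega{k_l+1}$ converging to $(\uomega{*})$, I would then pass to the limit in the update rules of Algorithm~\ref{alg:main}, now with $\epsilon_1=\epsilon_2=0$. For $1\leq j\leq d-t$, taking limits in lines~4--5 gives $\mathbf v^*_{j,i}\omega^*_i=\bignorm{\mathbf v^*_{j,i}\omega^*_i}\mathbf u^*_{j,i}$, and testing this identity against $\mathbf u^*_{j,i}$ (which is normalized) identifies the coefficient as $\sigma^*_i\omega^*_i$, so that $\mathbf v^*_{j,i}=\sigma^*_i\mathbf u^*_{j,i}$, i.e. the first equation of \eqref{prob:kkt_main_max}. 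For $d-t+1\leq j\leq d$, the polar-decomposition step together with Lemma~\ref{lem:polar_max} says that each $U^{k_l+1}_j$ maximizes ${\rm tr}(X^\top V^{k_l+1}_j\Omega^{k_l})$ over $X^\top X=I$; letting $l\to\infty$ shows that $U^*_j$ maximizes ${\rm tr}(X^\top V^*_j\Omega^*)$, so by the converse part of Lemma~\ref{lem:polar_max} there is a positive semidefinite $H^*_j$ with $V^*_j\Omega^*=U^*_jH^*_j$, which is the third equation of \eqref{prob:kkt_main_max_omega} in vector form. Finally line~16 gives $\omega^*_i=\sigma^*_i/\bignorm{\boldsymbol{\sigma}^*}$, closing the system, so $(\uomega{*})$ is a KKT point.

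I expect the main obstacle to be the first step, securing $\bignorm{\mathbf u^{k_l+1}_{j,i}-\mathbf u^{k_l}_{j,i}}\to0$, precisely because, unlike the $\epsilon_i>0$ case, the sufficient-decrease estimate is available only on $\ball{\alpha_0}$ and only at indices $\bar k$ whose iterate lies there; there is no term-by-term summability over all $k$ to fall back on. The argument thus hinges on two facts working in tandem: that $\{\uomega{k_l}\}$ eventually enters $\ball{\alpha_0}$ (so Theorem~\ref{th:sufficient_dec_epsilon0}, powered by ${\rm rank}(V^*_j)=R$, may be invoked at $\bar k=k_l$), and that monotonicity plus boundedness of $H$ forces the one-step gap to vanish. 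It is worth flagging that this only establishes KKT-ness of the given limit point; promoting it to convergence of the entire sequence is a separate matter, handled in Section~\ref{sec:Proof_global_covergence}.
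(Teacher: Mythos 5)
Your proposal is correct and follows essentially the route the paper intends: the paper omits this proof, stating it is "similar to that of Theorem \ref{th:sub_convergence_positive_e}", and your argument is precisely that adaptation — invoking the localized sufficient-decrease bound of Theorem \ref{th:sufficient_dec_epsilon0} at the indices $k_l$ (which is legitimate since the subsequence eventually enters $\ball{\alpha_0}$, as noted in item 2 of Remark \ref{rmk:1}), using monotonicity and boundedness of $H$ to force the one-step gap to vanish along the subsequence, and then passing to the limit in the update rules exactly as in the $\epsilon_i>0$ case. No gaps.
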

\subsection{Global convergence}\label{sec:global_conv_results}
If \eqref{prob:kkt_main_max}  or \eqref{prob:kkt_main_max_omega} have finitely many solutions, then Theorems \ref{th:sufficient_dec_positive_epsilon} and \ref{th:sufficient_dec_epsilon0} together with \cite[Proposition 8.3.10]{facchinei2007finite} suffice to show that the whole sequence $\{\mathbf u^k_{j,i} \}$ converges to a single KKT point $(\mathbf u^*_{j,i})$. In fact, similar to \cite[Theorem 5.1]{wang2015orthogonal}, using tools from algebraic geometry one can also prove that for almost all tensors the KKT system \eqref{prob:kkt_main_max_omega} has finitely many solutions. Thus one has that for \emph{almost all} tensors, the sequence generated by Algorithm \ref{alg:main} globally converges to a KKT point. 

However, our goal is to show that for \emph{all} tensors the global convergence holds. The results are presented in Theorems \ref{th:global_conv_positive_e} and \ref{th:global_conv_zero_e} with proof   left to Sect. \ref{sec:Proof_global_covergence}. In particular, the proof of Theorem \ref{th:global_conv_zero_e} does not directly follow from the existing framework of convergence proof, which might be of independent interest.
\begin{theorem}[Global convergence when $\epsilon_i>0$, $i=1,2$]
	\label{th:global_conv_positive_e}
			Let $\{\mathbf u^k_{j,i}  ,\omega^k_i\}$ be generated by Algorithm \ref{alg:main} started from any initializer $(\mathbf u^0_{j,i} ,\omega^0_i)$ with $\epsilon_1,\epsilon_2\geq \epsilon_0>0$  for solving \eqref{prob:ortho_main_max} where $1\leq t\leq d$. Then the whole sequence $\{\mathbf u^k_{j,i} ,\omega^k_i\}_{k=1}^\infty$ converges to a single limit point $(\mathbf u^*_{j,i},\omega^*_i)$, which is a KKT point in the sense of \eqref{prob:kkt_main_max} or \eqref{prob:kkt_main_max_omega}.
\end{theorem}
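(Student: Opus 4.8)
The plan is to invoke the Łojasiewicz (Kurdyka--Łojasiewicz) framework, since for \emph{all} tensors the KKT system may possess a continuum of solutions and the finite-solution argument via \cite[Proposition 8.3.10]{facchinei2007finite} no longer applies. Throughout write $z^k:=(\uomega{k})$ for the full iterate and recall that the feasible set is the compact real-analytic manifold $\mathcal M:=\prod_{j=1}^{d-t}(S^{n_j-1})^R\times\prod_{j=d-t+1}^{d}{\rm St}(n_j,R)\times S^{R-1}$, a product of unit spheres and Stiefel manifolds. First I would extract \emph{asymptotic regularity}: by Theorem \ref{th:sufficient_dec_positive_epsilon} together with the boundedness and monotonicity in Proposition \ref{prop:nondecreasing}, the nondecreasing sequence $\{H(z^k)\}$ converges to $H^\infty<\infty$, so $H(z^{k+1})-H(z^k)\to0$, and the ascent inequality then forces $\bignorm{z^{k+1}-z^k}\to0$. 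Compactness of $\mathcal M$ makes the limit-point set $\Omega$ nonempty; combined with $\bignorm{z^{k+1}-z^k}\to0$ it is standard that $\Omega$ is compact and connected, that $H\equiv H^\infty$ on $\Omega$, and by Theorem \ref{th:sub_convergence_positive_e} that every point of $\Omega$ is a KKT point.

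The key analytic ingredient is that $H$, being a polynomial in the entries of $(\mathbf u_{j,i},\omega_i)$, is real analytic, and $\mathcal M$ is a real-analytic manifold; hence the Łojasiewicz gradient inequality holds on $\mathcal M$. Concretely, for each $z^*\in\Omega$ there exist $\theta\in(0,1/2]$, $\kappa>0$ and a neighborhood $\mathbb B$ of $z^*$ such that
\[
|H(z)-H^\infty|^{1-\theta}\le \kappa\,\bignorm{{\rm grad}\,H(z)}\qquad\text{for all }z\in\mathcal M\cap\mathbb B,
\]
where ${\rm grad}\,H$ is the Riemannian gradient, whose vanishing is equivalent to the KKT system \eqref{prob:kkt_main_max_omega}. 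Because $\Omega$ is compact, connected and $H$ is constant on it, a covering (uniformized-KL) argument promotes these local inequalities to a single inequality valid on a tube around $\Omega$; after finitely many steps all $z^k$ lie in this tube, so the estimate applies along the whole tail.

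The second ingredient, and the main obstacle, is a \emph{relative-error (gradient) bound}
\[
\bignorm{{\rm grad}\,H(z^{k+1})}\le b\,\bignorm{z^{k+1}-z^k}
\]
for a uniform $b>0$. I would derive this blockwise from the update rules. For the sphere blocks, lines 4--5 make $\mathbf u^{k+1}_{j,i}$ collinear with $\mathbf v^{k+1}_{j,i}\omega^k_i+\epsilon_1\mathbf u^k_{j,i}$; projecting onto the tangent space at $\mathbf u^{k+1}_{j,i}$ annihilates the $\mathbf u^{k+1}_{j,i}$-part and leaves only the tangential projection of $-\epsilon_1\mathbf u^k_{j,i}$, of norm at most $\epsilon_1\bignorm{\mathbf u^{k+1}_{j,i}-\mathbf u^k_{j,i}}$. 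Lemma \ref{lem:polar_max} plays the same role for the Stiefel blocks $U_j$, and line 16 makes the $\boldsymbol\omega$-block exactly optimal. The remaining discrepancy between these update-time gradients and the true ${\rm grad}\,H(z^{k+1})$ is the Gauss--Seidel lag: earlier blocks are evaluated at iterate $k{+}1$ while later blocks still carry iterate-$k$ values. Multilinearity of $\innerprod{\mathcal A}{\bigotimesx}$ gives Lipschitz continuity of every partial gradient on the bounded set $\mathcal M$, so each mismatch is controlled by some $\bignorm{\mathbf u^{k+1}_{l,i}-\mathbf u^{k}_{l,i}}$, $\bignorm{U^{k+1}_l-U^k_l}$ or $\bignorm{\boldsymbol\omega^{k+1}-\boldsymbol\omega^k}$, and summing over blocks bounds everything by $b\,\bignorm{z^{k+1}-z^k}$. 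Here the uniform shift $\epsilon_1,\epsilon_2\ge\epsilon_0>0$ is decisive for the sufficient-ascent constant, keeping the polar/normalization coefficient bounded away from zero (cf.\ the $\lambda_R(\tilde V^{k+1}_j)+\epsilon$ factor in Lemma \ref{lem:polar_max_augmented_sufficient}), so no nondegeneracy assumption on $V^*_j$ is needed --- exactly what distinguishes this case from Theorem \ref{th:sub_convergence_zero_e}.

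Finally I would run the standard telescoping argument. Writing $\Delta_k:=\bignorm{z^{k+1}-z^k}$ and $r_k:=H^\infty-H(z^k)\ge0$, set the concave $\phi(s):=\tfrac{\kappa}{\theta}s^{\theta}$, so that $\phi'(r_k)\ge 1/\bignorm{{\rm grad}\,H(z^k)}$ by the Łojasiewicz inequality at $z^k$, and concavity gives
\[
\phi(r_k)-\phi(r_{k+1})\ \ge\ \phi'(r_k)\bigl(r_k-r_{k+1}\bigr)\ \ge\ \frac{H(z^{k+1})-H(z^k)}{\bignorm{{\rm grad}\,H(z^k)}}.
\]
Inserting the sufficient ascent $H(z^{k+1})-H(z^k)\ge c'\Delta_k^2$ (with $c'=\min\{\epsilon_0,c\}/2$ from Theorem \ref{th:sufficient_dec_positive_epsilon}) in the numerator and the relative-error bound $\bignorm{{\rm grad}\,H(z^k)}\le b\,\Delta_{k-1}$ in the denominator yields $\Delta_k^2\le \tfrac{b}{c'}\Delta_{k-1}\bigl[\phi(r_k)-\phi(r_{k+1})\bigr]$, whence the arithmetic--geometric inequality and a summation over $k$ give $\sum_k\Delta_k<\infty$. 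Thus $\{z^k\}$ is Cauchy and converges to a single limit point $z^*$, which by the first paragraph lies in $\Omega$ and is therefore a KKT point in the sense of \eqref{prob:kkt_main_max} or \eqref{prob:kkt_main_max_omega}. The delicate point throughout is the relative-error bound, because the sequential (Gauss--Seidel) structure and the implicit polar-decomposition update require expressing ${\rm grad}\,H(z^{k+1})$ purely in terms of one-step displacements, uniformly in $k$.
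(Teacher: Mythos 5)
Your proposal is correct and follows essentially the same route as the paper's proof: a sufficient-increase estimate (Theorem \ref{th:sufficient_dec_positive_epsilon}), a one-step relative-error bound on the (sub)gradient exploiting the collinearity/polar-decomposition structure of the updates and the Gauss--Seidel lag, the Kurdyka--\L{}ojasiewicz property, and the resulting telescoping/Cauchy argument. The differences are only in packaging --- the paper encodes the constraints via indicator functions, verifies the hypotheses {\bf H1}--{\bf H3} for the extended-valued objective $J$ in Lemma \ref{lem:hpothesis_satisfied}, obtains the KL property from semi-algebraicity in Lemma \ref{lem:kl_J_positive_e}, and then invokes the abstract convergence theorem (Theorem \ref{th:kl}), whereas you work intrinsically on the product of spheres and Stiefel manifolds with the Riemannian gradient and the real-analytic \L{}ojasiewicz inequality and carry out the telescoping explicitly; both instantiations are valid here since $H$ is polynomial on a compact algebraic feasible set.
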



\begin{theorem}
	\label{th:global_conv_zero_e}
	Let $\{\mathbf u^k_{j,i} ,\omega^k_i  \}$ be generated by Algorithm \ref{alg:main} started from any initializer $(\mathbf u^0_{j,i} ,\omega^0_i )$ with $\epsilon_1=\epsilon_2=  0$  for solving \eqref{prob:ortho_main_max} where $1\leq t\leq d$. If there is a limit point $(\mathbf u^*_{j,i} ,\omega^*_i)$, such that $V^*_j$'s have full column rank, $j=d-t+1,\ldots,d$,   then the whole sequence converges to $(\uomega{*})$ which   is a KKT point in the sense of \eqref{prob:kkt_main_max} or \eqref{prob:kkt_main_max_omega}.	
\end{theorem}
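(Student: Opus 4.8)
The plan is to upgrade the \emph{subsequential} convergence of Theorem \ref{th:sub_convergence_zero_e} to whole-sequence convergence by feeding the \emph{local} sufficient ascent of Theorem \ref{th:sufficient_dec_epsilon0} into a \L{}ojasiewicz-type argument, in the spirit of \cite{wang2015orthogonal,yang2016convergence}. Write $x^k:=(\uomega{k})$ and $x^*:=(\uomega{*})$, and measure steps by $\|x^{k+1}-x^k\|^2=\sum_{j,i}\|\mathbf u^{k+1}_{j,i}-\mathbf u^k_{j,i}\|^2+\|\boldsymbol{\omega}^{k+1}-\boldsymbol{\omega}^k\|^2$. By Theorem \ref{th:sub_convergence_zero_e} the limit point $x^*$ is already a KKT point, so only the convergence of the whole sequence to it remains. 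The decisive difference from the $\epsilon_i>0$ case (Theorem \ref{th:global_conv_positive_e}), where ascent holds for every $k$ and one may invoke \cite[Proposition 8.3.10]{facchinei2007finite} directly, is that here the ascent estimate is valid \emph{only while the iterate sits inside} $\ball{\alpha_0}$; this locality, flagged in item~1 of Remark \ref{rmk:1}, is what the argument must work around.

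First I would assemble the three standard ingredients. (i) \textbf{Local sufficient ascent:} Theorem \ref{th:sufficient_dec_epsilon0} gives $H(x^{k+1})-H(x^k)\geq \tfrac{c_3}{2}\|x^{k+1}-x^k\|^2$ whenever $x^k\in\ball{\alpha_0}$. (ii) \textbf{Relative error:} there is $b>0$ such that, for $x^{k-1},x^k\in\ball{\alpha_0}$, the Riemannian gradient of $H$ on the constraint manifold satisfies $\|\mathrm{grad}\,H(x^k)\|\leq b\,\|x^k-x^{k-1}\|$. I would derive this from the exact block-wise optimality of each update --- each subproblem maximizes a linear functional over a sphere or a Stiefel manifold, so its reduced stationarity is exact --- together with the Lipschitz continuity of the multilinear maps $\mathbf v_{j,i}$ on the compact feasible set; the full-column-rank hypothesis on $V^*_j$ is precisely what keeps the symmetric multipliers $\Lambda_j$ extracted from the polar factor $H^{k+1}_j$ bounded near $x^*$ (since $\lambda_R(V^*_j)>0$), so the residual against \eqref{prob:kkt_main_max_omega} is controlled by the successive differences uniformly over $\ball{\alpha_0}$. (iii) \textbf{\L{}ojasiewicz inequality:} since $H$ is a polynomial in the entries of the $\mathbf u_{j,i}$ and $\omega_i$, hence real analytic, and the feasible set is a real-analytic manifold, there are $\theta\in(0,\tfrac12]$, $\kappa>0$, and a neighborhood $U$ of $x^*$ with $(H^\infty-H(x))^{1-\theta}\leq \kappa\,\|\mathrm{grad}\,H(x)\|$ for $x\in U$.

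The heart of the proof is a trapping induction that confines the tail to $\ball{\alpha_0}$ so that (i) and (ii) apply at every step. Set $r_k:=H^\infty-H(x^k)\geq 0$ and $\phi(s):=s^\theta/\theta$, fix $\rho>0$ with $\ball{\rho}\subset U\cap\ball{\alpha_0}$, and use item~2 of Remark \ref{rmk:1} to pick $\bar k$ in the convergent subsequence, large enough that $x^{\bar k}\in\ball{\rho}$ and
\[
\|x^{\bar k}-x^*\| + \tfrac12\|x^{\bar k}-x^{\bar k-1}\| + \tfrac{2\kappa b}{c_3}\,\phi(r_{\bar k}) < \alpha_0 ,
\]
which is possible because $r_k\downarrow 0$ for the whole sequence by Proposition \ref{prop:nondecreasing}. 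Assuming $x^{\bar k},\ldots,x^{\bar k+p}\in\ball{\alpha_0}$, combining (i)--(iii) with the concavity of $\phi$ and an arithmetic--geometric mean step yields, for each such $k$,
\[
\|x^{k+1}-x^k\| \leq \tfrac12\|x^k-x^{k-1}\| + \tfrac{\kappa b}{c_3}\big[\phi(r_k)-\phi(r_{k+1})\big] ;
\]
summing telescopes the bracket, so $\sum_{l=0}^{p}\|x^{\bar k+l+1}-x^{\bar k+l}\|\leq \|x^{\bar k}-x^{\bar k-1}\| + \tfrac{2\kappa b}{c_3}\phi(r_{\bar k})$. Hence the displacement of $x^{\bar k+p+1}$ from $x^*$ is at most $\|x^{\bar k}-x^*\|$ plus this sum, which by the choice of $\bar k$ stays below $\alpha_0$; thus $x^{\bar k+p+1}\in\ball{\alpha_0}$ and the induction closes. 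Letting $p\to\infty$ gives $\sum_{k\geq\bar k}\|x^{k+1}-x^k\|<\infty$, so $\{x^k\}$ is Cauchy; its limit is the subsequential limit $x^*$, a KKT point by Theorem \ref{th:sub_convergence_zero_e}.

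I expect the relative-error bound (ii) and the \emph{budget} of the trapping step to be the delicate points. The genuine obstacle is the locality of Theorem \ref{th:sufficient_dec_epsilon0}: a priori an iterate could leave $\ball{\alpha_0}$ before the finite-length estimate takes effect, and then neither ascent nor relative error is available, breaking the induction. Forcing the displacement budget $\|x^{\bar k}-x^{\bar k-1}\|+\tfrac{2\kappa b}{c_3}\phi(r_{\bar k})$ to be strictly smaller than the gap $\alpha_0-\rho$ is exactly what prevents escape, and verifying that the constant $b$ in (ii) can be taken \emph{uniform} over the whole ball $\ball{\alpha_0}$ --- which again rests on the full-column-rank assumption through the boundedness and conditioning of the polar factors --- is where most of the care must go, and is the reason the argument does not reduce to an off-the-shelf convergence theorem.
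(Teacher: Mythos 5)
Your proposal is correct and follows essentially the same route as the paper: the paper's proof likewise combines the local sufficient ascent of Theorem \ref{th:sufficient_dec_epsilon0}, a relative-error bound on the (sub)gradient in terms of successive differences (Lemma \ref{lem:h2_zero_e}), and a Kurdyka--\L{}ojasiewicz inequality for the semi-algebraic merit function $J_0$, and then runs exactly your trapping induction (Lemmas \ref{lem:decreasing_l_large} and \ref{lem:decreasing_k_large}) to confine the tail of the sequence to a ball where the local ascent remains valid, yielding finite length and hence Cauchy convergence. The only cosmetic differences are that the paper phrases the stationarity residual via the limiting subdifferential of $J_0$ (with indicator functions of the constraint sets) rather than a Riemannian gradient, and invokes the general KL desingularizing function $\psi$ for semi-algebraic functions rather than the analytic \L{}ojasiewicz exponent; also, the full-column-rank hypothesis is needed only for the local ascent, not for the relative-error bound, which the paper establishes for all $k$.
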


As mentioned in Remark \ref{rmk:1}, 	since   $V^*_j$ often has full column rank in practice, the assumption makes sense.

\paragraph{Comparisons with existing convergence results} \cite{chen2009tensor} proposed ALS for \eqref{prob:ortho_main_max} where $t=d$, and showed that if $U^*_j$'s are of full column rank and $(\mathbf u^*_{j,i})$ is separated from other KKT points, and in addition, $\{\mathbf u^k_{j,i} \}$ lies in a neighborhood of $(\mathbf u^*_{j,i})$, then global convergence holds. Clearly, these assumptions are stronger than those in Theorem \ref{th:global_conv_zero_e}. 
\cite{wang2015orthogonal,guan2019numerical} both established the global convergence only for \emph{almost all} tensors, where the results of \cite{wang2015orthogonal}   hold when $t=1$; although \cite{guan2019numerical} gave the results for $1\leq t\leq d$, they relied on the assumption that   certain matrices constructed from every limit point admit simple leading singular values, and the columnwisely orthonormal factors (such as $V_j$, $d-t+1\leq j\leq d$ in the algorithm) constructed from every limit point have full column rank. Thus comparing with existing results, we obtain the global convergence for \emph{all} tensors for any $1\leq t\leq d$, with weaker or   without assumptions.

\section{Proofs of Global Convergence}\label{sec:Proof_global_covergence}
\subsection{Proof of Theorem \ref{th:global_conv_positive_e}}\label{sec:proof:global_conv_positive_e}
To begin with, we first need some definitions from nonsmooth analysis. Denote ${\rm dom}f:=\{\mathbf x\in\mathbb R^n\mid f(\mathbf x)<+\infty \}$.
\begin{definition}[c.f. \cite{attouch2013convergence}]
	\label{def:subdifferential}
	For $\mathbf x\in {\rm dom}f$, the Fr\'{e}chet subdifferential, denoted as $\hat \partial f(\mathbf x)$, is the  set of vectors $\mathbf w\in\mathbb R^n$ satisfying 
\begin{equation}\label{eq:f_subdiff}			      \setlength\abovedisplayskip{2pt}
\setlength\abovedisplayshortskip{2pt}
\setlength\belowdisplayskip{2pt}
\setlength\belowdisplayshortskip{2pt}
\liminf _{\mathbf y \neq \mathbf  x \atop\mathbf  y \rightarrow\mathbf  x} \frac{f(\mathbf  y)-f(\mathbf  x)-\langle \mathbf w, \mathbf  y-\mathbf  x\rangle}{\|\mathbf  x-\mathbf  y\|}\geq 0.
\end{equation}
	The subdifferential of $f$ at $\mathbf x\in{\rm dom}f$, written $\partial f$,  is defined as
	\[ \setlength\abovedisplayskip{3pt}
	\setlength\abovedisplayshortskip{3pt}
	\setlength\belowdisplayskip{3pt}
	\setlength\belowdisplayshortskip{3pt}
	\partial f(\mathbf x):=\left\{\mathbf w \in \mathbb{R}^{n}: \exists \mathbf x^{k} \rightarrow \mathbf x, f\left(\mathbf x^{k}\right) \rightarrow f(\mathbf x), \mathbf w^{k} \in \hat{\partial} f\left(\mathbf x^{k}\right) \rightarrow \mathbf w\right\}.
	\]
\end{definition}
From the above definitions, it can be seen that $\hat\partial f(\mathbf x)\subset \partial f(\mathbf x)$ for each $\mathbf x\in\mathbb R^n$ \cite{bolte2014proximal}. When $f(\cdot)$ is differentiable, they collapse to the usual definition of gradient $\nabla f(\cdot)$. For a minimization problem $\min_{\mathbf x\in\mathbb R^n}f(\mathbf x)$, if $\mathbf x^*$ is a minimizer, then it holds that 
\[			      \setlength\abovedisplayskip{3pt}
\setlength\abovedisplayshortskip{3pt}
\setlength\belowdisplayskip{3pt}
\setlength\belowdisplayshortskip{3pt}
0\in \partial f(\mathbf x^*).
\]
Such a point is called a critical point of $f(\cdot)$.  The above relation generalizes the definitions of KKT point and   system to a wide range of optimization problems.

An extended-real-valued function is a function $f:\mathbb R^n\rightarrow [-\infty,\infty]$; it  is called proper if $f(\mathbf x)>-\infty$ for all $\mathbf x$ and $f(\mathbf x)<\infty$ for at least one $\mathbf x$. Such a function is called closed if it is lower semi-continuous (l.s.c. for short).
The key property to show the global convergence for a nonconvex algorithm is the Kurdyka-\L{}ojasiewicz (KL) property given as follows:
\begin{definition}[KL property and KL function, c.f.  \cite{bolte2014proximal,attouch2013convergence}]\label{def:kl} A proper function $f$ is said to have the KL property at $\overline{\mathbf x}\in {\rm dom}\partial f :=\{\mathbf x\in\mathbb R^n\mid \partial f(\mathbf x)\neq\emptyset  \}$, if there exist $\bar\epsilon\in(0,\infty]$, a neighborhood $\mathcal N$ of $\overline{\mathbf x}$, and a continuous and concave function   $\psi: [0,\bar\epsilon) \rightarrow \mathbb R_+$ which is continuously differentiable on $(0,\bar\epsilon)$ with positive derivatives and $\psi(0)=0$, such that for all $\mathbf x\in \mathcal N$ satisfying $f( \overline{\mathbf x}) <f({\mathbf x}) < f(\overline{\mathbf x}) + \bar\epsilon $, it holds that
	\begin{equation}\label{eq:kl_property}			      \setlength\abovedisplayskip{3pt}
	\setlength\abovedisplayshortskip{3pt}
	\setlength\belowdisplayskip{3pt}
	\setlength\belowdisplayshortskip{3pt}
	\psi^\prime(f(\mathbf x) - f(\overline{\mathbf x})  ){\rm dist}(0,\partial f(\mathbf x)) \geq 1,
	\end{equation}
where ${\rm dist}(0,\partial f(\mathbf x)) $ means the distance from the original point to the set  $\partial f(\mathbf x)$.	If a proper and l.s.c. function $f$ satisfies the KL property at each point of ${\rm dom}\partial f$, then $f$ is called a KL function. 
\end{definition}

It is known that KL functions are ubiquitous in applications \cite[p. 467]{bolte2014proximal}. As will be shown later, our problem can be seen as also a KL function. 

Let $\{\mathbf x^k  \}$ be generated by an algorithm for solving a minimization problem $\min_{\mathbf x\in\mathbb R^n} f(\mathbf x)$. \cite{attouch2013convergence} demonstrates that if $f(\cdot)$ is KL and certain conditions are met, then the whole sequence converges to a critical point. For fixed constants $a,b>0$, the conditions are presented as follows:

\noindent {\bf H1}.
$
f(\mathbf x^{k}) - f(\mathbf x^{k+1}) \geq a\|\mathbf x^k-\mathbf x^{k+1}\|^2,~\forall k;
$

\noindent {\bf H2}. There exists $\mathbf y^{k+1}\in \partial f(\mathbf x^{k+1})$ such that
\begin{equation*}			      \setlength\abovedisplayskip{3pt}
\setlength\abovedisplayshortskip{3pt}
\setlength\belowdisplayskip{3pt}
\setlength\belowdisplayshortskip{3pt}
\|\mathbf y^{k+1}\|\leq b\|\mathbf x^{k+1} - \mathbf x^k\|;
\end{equation*}

\noindent {\bf H3}. There exists a subsequence $\{\mathbf x^{k_j}  \}_{j=1}^\infty$ of $\{\mathbf x^k \}_{k=1}^\infty$ and $\tilde{\mathbf x}$ such that
\[			    
\mathbf x^{k_j}\rightarrow \tilde{\mathbf x},~{\rm and}~ f(\mathbf x^{k_j})\rightarrow f(\tilde{\mathbf x}),~j\rightarrow \infty.
\]
\begin{theorem}(\cite[Theorem 2.9]{attouch2013convergence})
	\label{th:kl}
	Let $f:\mathbf R^n\rightarrow R\cup \{+\infty\}$ be a proper l.s.c. function. If a sequence $\{\mathbf x^k \}_{k=1}^\infty$ satisfies {\bf H1}, {\bf H2} and {\bf H3}, and if $f(\cdot)$ has the KL property at $\tilde{\mathbf x}$ specified in {\bf H3},  then $\lim_{k\rightarrow \infty } \mathbf x^k = \tilde{\mathbf x}$; moreover, $\tilde{\mathbf x}$ is a critical point of $f(\cdot)$.	
\end{theorem}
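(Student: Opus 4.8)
The plan is to run the descent-plus-KL argument of Attouch--Bolte--Svaiter, whose core is to prove that the increments $\|\mathbf x^{k+1}-\mathbf x^k\|$ are \emph{summable}; once summability is in hand, convergence of the whole sequence and criticality of the limit follow quickly. First I would fix the target value $f^*:=f(\tilde{\mathbf x})$ supplied by {\bf H3}. By {\bf H1} the scalar sequence $\{f(\mathbf x^k)\}$ is nonincreasing, and since $f(\mathbf x^{k_j})\to f^*$ along the subsequence of {\bf H3}, monotonicity forces $f(\mathbf x^k)\downarrow f^*$. If $f(\mathbf x^{k_0})=f^*$ for some $k_0$, then {\bf H1} gives $\mathbf x^k=\mathbf x^{k_0}$ for all $k\geq k_0$, so the sequence is eventually constant and the conclusion is trivial; hence I may assume $f(\mathbf x^k)>f^*$ for every $k$.

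The heart of the argument is a recursive estimate valid at those indices $k$ for which $\mathbf x^k$ lies in the KL neighborhood $\mathcal N$ of $\tilde{\mathbf x}$ from Definition \ref{def:kl} and $f^*<f(\mathbf x^k)<f^*+\bar\epsilon$. At such $k$, the KL inequality \eqref{eq:kl_property} combined with {\bf H2} applied at $\mathbf x^k$ (which gives $\mathrm{dist}(0,\partial f(\mathbf x^k))\leq b\|\mathbf x^k-\mathbf x^{k-1}\|$) yields $\psi'(f(\mathbf x^k)-f^*)\geq 1/(b\|\mathbf x^k-\mathbf x^{k-1}\|)$. Writing $\Delta_k:=\psi(f(\mathbf x^k)-f^*)-\psi(f(\mathbf x^{k+1})-f^*)$ and using concavity of $\psi$ together with {\bf H1}, I obtain $\Delta_k\geq \psi'(f(\mathbf x^k)-f^*)\,a\|\mathbf x^{k+1}-\mathbf x^k\|^2$, and therefore $\|\mathbf x^{k+1}-\mathbf x^k\|^2\leq (b/a)\,\Delta_k\,\|\mathbf x^k-\mathbf x^{k-1}\|$. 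The arithmetic--geometric inequality $2\sqrt{uv}\leq u+v$ then gives the key recursion $2\|\mathbf x^{k+1}-\mathbf x^k\|\leq \|\mathbf x^k-\mathbf x^{k-1}\|+(b/a)\Delta_k$. Summing this telescoping inequality over $k$ (the $\Delta_k$-sum telescopes and is bounded by $\psi(f(\mathbf x^{k_0})-f^*)$ since $\psi\geq 0$) shows $\sum_k\|\mathbf x^{k+1}-\mathbf x^k\|<\infty$, so $\{\mathbf x^k\}$ is Cauchy and converges; because a subsequence tends to $\tilde{\mathbf x}$, the entire sequence converges to $\tilde{\mathbf x}$.

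The step I expect to be the main obstacle is making the recursion above \emph{legitimate for all large $k$}, that is, guaranteeing that the iterates stay inside $\mathcal N$, where the KL inequality may be invoked. This calls for a bootstrapping argument: using {\bf H3} I select an index $k_0$ with $\mathbf x^{k_0}$ close to $\tilde{\mathbf x}$ and $f(\mathbf x^{k_0})$ close to $f^*$, and I calibrate the radius of $\mathcal N$ against the initial closeness so that the partial-sum bound produced by the recursion itself prevents subsequent iterates from escaping $\mathcal N$. An induction on $k$ then certifies that the estimate applies along the whole tail of the sequence, closing the circular dependence between ``staying in $\mathcal N$'' and ``having the increment bound.''

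Finally, criticality of the limit is immediate. By {\bf H2} there exist $\mathbf y^{k+1}\in\partial f(\mathbf x^{k+1})$ with $\|\mathbf y^{k+1}\|\leq b\|\mathbf x^{k+1}-\mathbf x^k\|\to 0$, since summability forces the increments to vanish. As $\mathbf x^{k+1}\to\tilde{\mathbf x}$ and $f(\mathbf x^{k+1})\to f(\tilde{\mathbf x})$, the closedness of the limiting subdifferential graph built into the definition of $\partial f$ (Definition \ref{def:subdifferential}) gives $0\in\partial f(\tilde{\mathbf x})$, i.e. $\tilde{\mathbf x}$ is a critical point of $f$, which completes the proof.
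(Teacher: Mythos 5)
Your proposal is correct and follows essentially the same route as the paper: the theorem is quoted from \cite{attouch2013convergence} rather than reproved, and your argument is precisely the Attouch--Bolte--Svaiter proof of that result --- monotonicity of $f(\mathbf x^k)$ from {\bf H1}, the bound ${\rm dist}(0,\partial f(\mathbf x^k))\leq b\|\mathbf x^k-\mathbf x^{k-1}\|$ from {\bf H2} fed into the KL inequality, the concavity/AM--GM recursion yielding summable increments, the bootstrapping induction to keep iterates in the KL neighborhood, and closedness of the limiting subdifferential for criticality of $\tilde{\mathbf x}$. Indeed, the same recursion and induction-in-a-ball device is what the paper itself deploys in Lemmas \ref{lem:decreasing_l_large} and \ref{lem:decreasing_k_large} when adapting this argument to the $\epsilon_1=\epsilon_2=0$ case, so your reconstruction matches both the cited source and the paper's in-house technique.
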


Denote $I_C(\mathbf x)$ as the indicator (characteristic) function of a closed set $C\subset\mathbb R^n$ as
\[
I_C(\mathbf x) = 0,~{\rm if}~\mathbf x\in C;~ I_C(\mathbf x) = +\infty,~{\rm if}~\mathbf x\not\in C.
\]
Denote
 \begin{equation*}			      \setlength\abovedisplayskip{2pt}
 \setlength\abovedisplayshortskip{2pt}
 \setlength\belowdisplayskip{2pt}
 \setlength\belowdisplayshortskip{2pt}
 \begin{split}
&C_{j,i}:=\{\mathbf u_{j,i} \in\mathbb R^{n_j} \mid \|\mathbf u_{j,i}\|=1  \}, 1\leq j\leq d-t,1\leq i\leq R; \\
&C_{j}:=\{U_j \in\mathbb R^{n_j\times R} \mid U_j^\top U_j = I \},~d-t+1\leq j\leq d; \\
& C_{\boldsymbol{ \omega}} := \{ \boldsymbol{ \omega}\in\mathbb R^R\mid \|\boldsymbol{ \omega}\|=1 \}.
 \end{split}
 \end{equation*}
Then we consider the following minimization problem, as a variant of \eqref{prob:ortho_main_max_omega}  
	 \begin{eqnarray}
\label{prob:ortho_main_min_omega_variant}
&&\min~ J(\mathbf u_{j,i}) = J(\mathbf u_{j,i},\omega_i):=-H(\mathbf u_{i,j},\omega_i) -\frac{\epsilon_1}{2}\sum^{d-t}_{j=1}\sum^R_{i=1}\|\mathbf u_{j,i}\|^2 - \frac{\epsilon_2}{2}\sum^d_{j=d-t+1}\|U_j\|_F^2  \nonumber\\
&&~~~~~~~~~~~~~~~~~~~~~~+ \sum^{d-t}_{j=1} \sum^R_{i=1} \indicatorf{j,i}{\mathbf u_{j,i}} + \sum^d_{j=d-t+1}\indicatorf{j}{U_j} + \indicatorf{\boldsymbol{ \omega}}{\boldsymbol{ \omega}}.
\end{eqnarray}
Without the augmented terms $-\frac{\epsilon_1}{2}\sum^{d-t}_{j=1}\sum^R_{i=1}\|\mathbf u_{j,i}\|^2 - \frac{\epsilon_2}{2}\sum^d_{j=d-t+1}\|U_j\|_F^2$, the above problem is in fact the same as \eqref{prob:ortho_main_max_omega}, where the ``$\min$'' is converted to ``$\max$'', and the constraints are respectively replaced by their indicator functions. On the other hand, note that under the constraints, the augmented terms boil down exactly to a constant  $-  (d-t)R\epsilon_1/2- tR\epsilon_2/2$. Therefore, \eqref{prob:ortho_main_min_omega_variant} is essentially the same as \eqref{prob:ortho_main_max_omega}.

\begin{lemma}\label{lem:hpothesis_satisfied}
	Under the setting of Theorem \ref{th:global_conv_positive_e}, {\bf H1}, {\bf H2} and {\bf H3} are met by the sequence $\{\uomega{k} \}$.
\end{lemma}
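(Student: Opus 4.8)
The plan is to realize $\{\uomega{k}\}$ as the iterate sequence $\mathbf x^k:=(\mathbf u^k_{j,i},\boldsymbol{\omega}^k)$ for the minimization problem \eqref{prob:ortho_main_min_omega_variant} and to verify {\bf H1}, {\bf H2}, {\bf H3} directly, exploiting that $J$ agrees with $-H$ (up to the additive constant $-(d-t)R\epsilon_1/2-tR\epsilon_2/2$) on the feasible set. Condition {\bf H1} is essentially already proved: since every iterate is feasible, $J(\mathbf x^k)-J(\mathbf x^{k+1})=H(\uomega{k+1})-H(\uomega{k})$, and Theorem \ref{th:sufficient_dec_positive_epsilon} supplies the required inequality with $a=\min\{\epsilon_0/2,c/2\}$, because $\|\mathbf x^{k+1}-\mathbf x^k\|^2=\sum_{j,i}\|\mathbf u^{k+1}_{j,i}-\mathbf u^k_{j,i}\|^2+\|\boldsymbol{\omega}^{k+1}-\boldsymbol{\omega}^k\|^2$. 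Condition {\bf H3} follows from compactness: the feasible set is a product of unit spheres, Stiefel manifolds, and the $\boldsymbol{\omega}$-sphere, hence compact, so a convergent subsequence $\mathbf x^{k_j}\to\tilde{\mathbf x}$ with $\tilde{\mathbf x}$ feasible exists, and since $J$ restricted to the closed feasible set equals $-H+\text{const}$ (continuous there), $J(\mathbf x^{k_j})\to J(\tilde{\mathbf x})$.

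The substance of the lemma is {\bf H2}. The first step is to compute $\partial J(\mathbf x^{k+1})$ via the sum rule. Because each constraint set is a smooth manifold, the subdifferential of each indicator term is the associated normal cone, namely $\{\lambda\,\mathbf u_{j,i}:\lambda\in\mathbb R\}$ for a sphere factor, $\{U_jS:S=S^\top\}$ for a Stiefel factor, and $\{\mu\,\boldsymbol{\omega}:\mu\in\mathbb R\}$ for the $\boldsymbol{\omega}$-sphere. Thus a subgradient is obtained by adding to the smooth gradient of $J$ at $\mathbf x^{k+1}$ — whose blocks are $-\omega_i\mathbf v_{j,i}-\epsilon_1\mathbf u_{j,i}$, $-V_j\Omega-\epsilon_2U_j$, and $-\boldsymbol{\sigma}$ — one free normal-cone element per block. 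The strategy is to read off the multipliers $\lambda_{j,i}$, $S_j$, $\mu$ from the update rules so as to cancel the ``recoverable'' part of each gradient, leaving only the mismatch created by the Gauss--Seidel nature of the sweep (later-updated modes still carrying old values when $\mathbf v^{k+1}_{j,i}$ is formed).

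Concretely, for $1\le j\le d-t$ the normalization step gives $\omega^k_i\mathbf v^{k+1}_{j,i}+\epsilon_1\mathbf u^k_{j,i}=\|\tilde{\mathbf v}^{k+1}_{j,i}\|\,\mathbf u^{k+1}_{j,i}$, so taking $\lambda_{j,i}=\|\tilde{\mathbf v}^{k+1}_{j,i}\|$ collapses that block to $-(\omega^{k+1}_i\mathbf v^{k+1,\mathrm{full}}_{j,i}-\omega^k_i\mathbf v^{k+1}_{j,i})+\epsilon_1(\mathbf u^k_{j,i}-\mathbf u^{k+1}_{j,i})$, where $\mathbf v^{k+1,\mathrm{full}}_{j,i}$ is the gradient recomputed at the fully updated point. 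For $d-t+1\le j\le d$ the polar step gives $V^{k+1}_j\Omega^k+\epsilon_2U^k_j=U^{k+1}_jH^{k+1}_j$, so the admissible symmetric choice $S_j=H^{k+1}_j$ collapses that block to $-(V^{k+1,\mathrm{full}}_j\Omega^{k+1}-V^{k+1}_j\Omega^k)+\epsilon_2(U^k_j-U^{k+1}_j)$. For the $\boldsymbol{\omega}$-block, which is updated last with all coordinates current, the choice $\mu=\|\boldsymbol{\sigma}^{k+1}\|$ annihilates the component exactly, since $\boldsymbol{\sigma}^{k+1}=\|\boldsymbol{\sigma}^{k+1}\|\boldsymbol{\omega}^{k+1}$.

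What remains, and what I expect to be the main obstacle, is to bound every mismatch uniformly by a multiple of $\|\mathbf x^{k+1}-\mathbf x^k\|$. The gradient maps $\mathbf v_{j,i}$ and $V_j$ are multilinear in the remaining factors and $\boldsymbol{\omega}$ enters linearly, so on the compact feasible set they are Lipschitz; the only discrepancy between the ``full'' and algorithmic gradients is that later-swept modes retain old values, whence $\|\omega^{k+1}_i\mathbf v^{k+1,\mathrm{full}}_{j,i}-\omega^k_i\mathbf v^{k+1}_{j,i}\|$ and $\|V^{k+1,\mathrm{full}}_j\Omega^{k+1}-V^{k+1}_j\Omega^k\|_F$ are each controlled by $\sum_l\|\mathbf u^{k+1}_{l,i}-\mathbf u^k_{l,i}\|+\|\boldsymbol{\omega}^{k+1}-\boldsymbol{\omega}^k\|$. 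Assembling the blockwise estimates and absorbing the Lipschitz constant together with $\epsilon_1,\epsilon_2$ into a single $b$ yields {\bf H2}. The cancellations themselves are clean once the multipliers are identified, so the only genuinely delicate point is securing a Lipschitz constant that is uniform in $k$ — which compactness of the feasible set delivers.
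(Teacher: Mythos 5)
Your proposal is correct and follows essentially the same route as the paper's proof: \textbf{H1} from Theorem \ref{th:sufficient_dec_positive_epsilon}, \textbf{H3} from compactness of the feasible set, and \textbf{H2} by exhibiting the subgradient block $\tilde{\mathbf v}^{k+1}_{j,i}-\hat{\mathbf v}^{k+1}_{j,i}$ (resp.\ $\tilde V^{k+1}_j-\hat V^{k+1}_j$, and $0$ for the $\boldsymbol{\omega}$-block) and bounding it via the multilinear Lipschitz estimate of Lemma \ref{lem:app1}. The only cosmetic difference is that you identify the indicator subdifferentials as the manifolds' normal cones with explicit multipliers $\lambda_{j,i}=\|\tilde{\mathbf v}^{k+1}_{j,i}\|$, $S_j=H^{k+1}_j$, $\mu=\|\boldsymbol{\sigma}^{k+1}\|$, whereas the paper verifies the same memberships directly from the Fr\'{e}chet subdifferential definition and the variational characterization of the normalization and polar-decomposition steps.
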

\begin{proof}
 From the definition of $J(\cdot)$ and Theorem \ref{th:sufficient_dec_positive_epsilon}, we see that
 \[
 J(\mathbf u^k_{j,i},\omega^k_i) - J(\mathbf u^{k+1}_{j,i},\omega^{k+1}_i) \geq \frac{\epsilon_0}{2}\sum^d_{j=1}\sum^R_{i=1}\|\mathbf u^{k+1}_{j,i} - \mathbf u^k_{j,i}\|^2 + \frac{c}{2}\|\boldsymbol{ \omega}^{k+1}-\boldsymbol{ \omega}^k\|^2,~\forall k,
 \]
 and so {\bf H1} holds. On the other hand, for any limit point $(\uomega{*})$ of $\{\uomega{k} \}$ with $\{\uomega{k_l} \}_{l=1}^\infty\rightarrow (\uomega{*})$, clearly {\bf H3} holds. It remains to verify {\bf H2}.
 
  For $1\leq j\leq d-t$, $1\leq i\leq R$, recall that    
  $\mathbf v_{j,i}^{k+1} =  {\mathcal A}{\mathbf u_{1,i}^{k+1}\otimes \cdots\otimes \mathbf u_{j-1,i}^{k+1} \otimes \mathbf u_{j+1,i}^k \otimes\cdots\otimes \mathbf u_{d,i}^k  }$, and $\tilde{\mathbf v}^{k+1}_{j,i} = \mathbf v^{k+1}_{j,i}\omega^k + \epsilon_1\mathbf u^k_{j,i} $.    We first show that 
\begin{equation}\label{eq:proof:1}
			      \setlength\abovedisplayskip{3pt}
\setlength\abovedisplayshortskip{3pt}
\setlength\belowdisplayskip{3pt}
\setlength\belowdisplayshortskip{3pt}
 \tilde{\mathbf v}^{k+1}_{j,i} \in \partial \indicatorf{j,i}{\mathbf u^{k+1}_{j,i}}.
\end{equation}
 Since  by the discussions below Definition \ref{def:subdifferential}, $\hat\partial \indicatorf{j,i}{\mathbf u^{k+1}_{j,i}}\subset  \partial \indicatorf{j,i}{\mathbf u^{k+1}_{j,i}}$, it suffices to show $\tilde{\mathbf v}^{k+1}_{j,i} \in\hat\partial \indicatorf{j,i}{\mathbf u^{k+1}_{j,i}}$. From the definition of $\hat\partial \indicatorf{j,i}{\cdot}$, if $\mathbf y\not\in C_{j,i} $, then the left hand-side of \eqref{eq:f_subdiff} is infinity and the inequality of \eqref{eq:f_subdiff} with $\mathbf w= \tilde{\mathbf v}^{k+1}_{j,i}$ naturally holds; otherwise, when $\mathbf y\in C_{j,i}$, namely, $\|\mathbf y\|=1$, from the definition of $\mathbf u^{k+1}_{j,i}$, we have
 \[			      \setlength\abovedisplayskip{2pt}
 \setlength\abovedisplayshortskip{2pt}
 \setlength\belowdisplayskip{2pt}
 \setlength\belowdisplayshortskip{2pt}
 \mathbf u^{k+1}_{j,i} = \frac{  {\tilde{\mathbf v}^{k+1}_{j,i}}  }{\bignorm{\tilde{\mathbf v}^{k+1}_{j,i}}} = \arg\max_{ \|\mathbf y\|=1 } \innerprod{\mathbf y}{ \tilde{\mathbf v}^{k+1}_{j,i}  }\Leftrightarrow \langle \tilde{\mathbf v}^{k+1}_{j,i},\mathbf u^{k+1}_{j,i}-\mathbf y\rangle \geq 0,~\forall \|\mathbf y\|=1,
 \]
 which clearly shows 
 \[			      \setlength\abovedisplayskip{3pt}
 \setlength\abovedisplayshortskip{3pt}
 \setlength\belowdisplayskip{3pt}
 \setlength\belowdisplayshortskip{3pt}
 \liminf _{\mathbf y \neq \mathbf   u^{k+1}_{j,i}, \mathbf  y \rightarrow\mathbf   u^{k+1}_{j,i} } \frac{     \indicatorf{j,i}{\mathbf y}-  \indicatorf{j,i}{\mathbf u^{k+1}_{j,i}}  -\langle \tilde{\mathbf v}^{k+1}_{j,i}, \mathbf  y -\mathbf  u^{k+1}_{j,i} \rangle}{\|\mathbf  y-\mathbf   u^{k+1}_{j,i} \|}\geq 0,
 \]
 and so $\tilde{\mathbf v}^{k+1}_{j,i} \in\hat\partial \indicatorf{j,i}{\mathbf u^{k+1}_{j,i}}\subset  \partial \indicatorf{j,i}{\mathbf u^{k+1}_{j,i}}$.
 
 Next, we define \begin{equation}	\label{eq:proof:global:11}		      \setlength\abovedisplayskip{2pt}
 \setlength\abovedisplayshortskip{2pt}
 \setlength\belowdisplayskip{2pt}
 \setlength\belowdisplayshortskip{2pt}
 \hat{\mathbf v}^{k+1}_{j,i}  := \mathcal A\bigotimes_{l\neq j}\nolimits\mathbf u^{k+1}_{l,i}\omega^{k+1}_i + \epsilon_1\mathbf u^{k+1}_{j,i} .\end{equation}
 Then the subdifferential   of $J(\cdot)$ with respect to $\mathbf u_{j,i}$ at $(\uomega{k+1})$ is exactly 
 \[
\partial_{ \mathbf u_{j,i} } J(\mathbf u^{k+1}_{j,i},\omega^{k+1}_i)  = -\hat{\mathbf v}^{k+1}_{j,i} + \indicatorf{j,i}{\mathbf u^{k+1}_{j,i}},
 \]
 which together with \eqref{eq:proof:1} yields that
\begin{equation}
\label{eq:proof:2}
 \tilde{\mathbf v}^{k+1}_{j,i} - \hat{\mathbf v}^{k+1}_{j,i} \in \partial_{ \mathbf u_{j,i} } J(\mathbf u^{k+1}_{j,i},\omega^{k+1}_i), ~1\leq j\leq d-t,1\leq i\leq R. 
\end{equation}

We then consider $j=d-t+1,\ldots,d$ and $i=1,\ldots,R$. Recall that $\tilde V^{k+1}_j = V^{k+1}_j\Omega^k + \epsilon_2 U^k_j$ where $\Omega^k = {\rm diag}(\omega^k_1,\ldots,\omega^k_R)$. We first show that
\begin{equation}
\label{eq:proof:3}
\tilde V^{k+1}_j \in \partial \indicatorf{j}{U^{k+1}_j}.
\end{equation}
Similar to the above argument, when $Y\not\in C_j$, \eqref{eq:f_subdiff} with $\mathbf w = \tilde V^{k+1}_j$ naturally holds; when $Y\in C_j$, from the definition of $U^{k+1}_j$ and Lemma \ref{lem:polar_max},
\[
U^{k+1}_j \in\arg\max_{Y^\top Y=I}\innerprod{ Y}{\tilde V^{k+1_j}} \Leftrightarrow \innerprod{\tilde V^{k+1}_j}{ U^{k+1}_j - Y   }\geq 0,~\forall Y^\top Y=I,
\]
which again shows that \eqref{eq:f_subdiff} holds with $\mathbf w = \tilde V^{k+1}_j$. As a result, \eqref{eq:proof:3} is true. Next, we define $\hat V^{k+1}:= [ \hat{\mathbf v}^{k+1}_{j,1},\ldots,\hat{\mathbf v}^{k+1}_{j,R}     ]\in\mathbb R^{n_j\times R}$ where $\hat{\mathbf v}^{k+1}_{j,i} $ is given in \eqref{eq:proof:global:11}	. Therefore,
\[
\partial_{U_j} J(\mathbf u^{k+1}_{j,i},\omega^{k+1}_i) = - \hat V^{k+1}_j + \indicatorf{j}{U^{k+1}_j},
\]
which together with \eqref{eq:proof:3} shows that
\begin{equation}
\label{eq:proof:4}
\tilde V^{k+1}_j - \hat V^{k+1}_j \in \partial_{U_j} J(\mathbf u^{k+1}_{j,i},\omega^{k+1}_i ) ,~d-t+1\leq j\leq d.
\end{equation}

Finally, 
 in the same vein we can show that $\boldsymbol{ \sigma}^{k+1} \in \indicatorf{\boldsymbol{ \omega}}{\boldsymbol{ \omega}^{k+1}}$, which together with $\partial_{\boldsymbol{ \omega}} J(\mathbf u^{k+1}_{j,i}, { \omega}^{k+1}_i) = -\boldsymbol{ \sigma}^{k+1} + \indicatorf{\boldsymbol{ \omega}}{\boldsymbol{ \omega}^{k+1}} $ gives
\begin{equation}\label{eq:proof:5}
0\in \partial_{\boldsymbol{ \omega}}J(\mathbf u^{k+1}_{j,i},  { \omega}^{k+1}_i).
\end{equation}
Combining \eqref{eq:proof:2},  \eqref{eq:proof:4} and \eqref{eq:proof:5}, we see that 
\begin{equation}
\label{eq:proof:6}
( \tilde{\mathbf v}^{k+1}_{1,1} -\hat{\mathbf v}^{k+1}_{1,1} ,\ldots, \tilde{\mathbf v}^{k+1}_{d-t,R} -\hat{\mathbf v}^{k+1}_{d-t,R}, \tilde U^{k+1}_{d-t+1}-\hat U^{k+1}_{d-t+1},\ldots, \tilde U^{k+1}_d - \hat U^{k+1}_d, 0 ) \in \partial J( \mathbf u^{k+1}_{j,i},  { \omega}^{k+1}_i  ).
\end{equation}
The remaining task is to upper bound the left hand-side of \eqref{eq:proof:6} by $\bignorm{ (\mathbf u^{k+1}_{j,i},\omega^{k+1}_i  ) - (\mathbf u^k_{j,i},\omega^k_i)    }$. By noticing the definition of $\tilde{\mathbf v}_{j,i}$, $\hat{\mathbf v}_{j,i}$, $\tilde V_j$, $\hat V_j$, and by using Lemma \ref{lem:app1}, it is not hard to verify that there is a constant $b>0$ such that
\begin{eqnarray}
\label{eq:proof:7}
&&\bignorm{( \tilde{\mathbf v}^{k+1}_{1,1} -\hat{\mathbf v}^{k+1}_{1,1} ,\ldots, \tilde{\mathbf v}^{k+1}_{d-t,R} -\hat{\mathbf v}^{k+1}_{d-t,R}, \tilde U^{k+1}_{d-t+1}-\hat U^{k+1}_{d-t+1},\ldots, \tilde U^{k+1}_d - \hat U^{k+1}_d, 0 )}_F  \nonumber\\
&\leq& b\bignorm{ (\mathbf u^{k+1}_{j,i},\omega^{k+1}_i  ) - (\mathbf u^k_{j,i},\omega^k_i)    },
\end{eqnarray}
where $b$ only depends on $\epsilon_i$, $i=1,2$, $c$, $d$, $H^\infty$ defined in Proposition \ref{prop:nondecreasing}, and $\bignorm{\mathcal A}$ where $\bignorm{\mathcal A}$ denotes certain norm of $\mathcal A$. This verifies {\bf H2}. The proof has been completed.
\end{proof}

The verification of the following results uses some basic definitions and propositions concerning semi-algebraic sets and functions given in Appendix \ref{sec:app:kl}.
\begin{lemma}\label{lem:kl_J_positive_e}
For $\epsilon_i\geq 0$, $i=1,2$, $J(\cdot)$ is proper, l.s.c., and	admits the KL property   at any KKT point $(\mathbf u_{j,i},\omega_i)$ of \eqref{prob:kkt_main_max_omega}.
\end{lemma}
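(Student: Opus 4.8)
The plan is to obtain the KL property as a consequence of \emph{semi-algebraicity}: it is classical (see Appendix \ref{sec:app:kl} and \cite{bolte2014proximal,attouch2013convergence}) that every proper, lower semicontinuous, semi-algebraic function satisfies the KL property at each point of its domain, so once $J(\cdot)$ is shown to lie in this class the statement follows immediately. Thus the whole argument reduces to three bookkeeping verifications: properness, lower semicontinuity, and semi-algebraicity of $J$.

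First I would check properness and l.s.c. The smooth part $-H(\mathbf u_{i,j},\omega_i)-\frac{\epsilon_1}{2}\sum_{j,i}\bignorm{\mathbf u_{j,i}}^2-\frac{\epsilon_2}{2}\sum_{j}\bigfnorm{U_j}^2$ is a multivariate polynomial in the entries of the $\mathbf u_{j,i}$'s and of $\boldsymbol{\omega}$ (indeed $H=\sum_i\omega_i\innerprod{\mathcal A}{\bigotimesu}$ has degree $d+1$), hence it is continuous and finite everywhere; each indicator $\indicatorf{j,i}{\cdot}$, $\indicatorf{j}{\cdot}$, $\indicatorf{\boldsymbol{ \omega}}{\cdot}$ takes values in $\{0,+\infty\}$. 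Consequently $J$ never equals $-\infty$, and at any feasible point all indicators vanish so $J$ is finite there; thus $J$ is proper. For lower semicontinuity I would note that $C_{j,i}$, $C_j$ and $C_{\boldsymbol{ \omega}}$ are closed, being the zero sets of the continuous maps $\bignorm{\mathbf u_{j,i}}^2-1$, $U_j^\top U_j-I$ and $\bignorm{\boldsymbol{ \omega}}^2-1$; hence their indicators are l.s.c., and the sum of a continuous function with finitely many l.s.c.\ functions is l.s.c.

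Next I would verify semi-algebraicity term by term, using the stability properties recalled in Appendix \ref{sec:app:kl}. The smooth part is a polynomial and hence semi-algebraic. Each constraint set is defined by finitely many polynomial equations, so it is a semi-algebraic set, and the indicator function of a semi-algebraic set is semi-algebraic. Since finite sums of semi-algebraic functions are again semi-algebraic, $J$ is semi-algebraic. Combining the two paragraphs, $J$ is proper, l.s.c.\ and semi-algebraic, hence a KL function; in particular it admits the KL property at every point of ${\rm dom}\,\partial J$. Any KKT point of \eqref{prob:kkt_main_max_omega} satisfies $\partial J\neq\emptyset$ (it is a critical point, so $0\in\partial J$ there), hence lies in ${\rm dom}\,\partial J$, which gives the claim.

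I do not anticipate a genuine obstacle here; the only point requiring care is confirming that the Stiefel-type constraints $U_j^\top U_j=I$ truly define semi-algebraic sets---they are the common zero set of the $R^2$ polynomial entries of $U_j^\top U_j-I$---and that the closure properties invoked (indicators of semi-algebraic sets are semi-algebraic, and finite sums of semi-algebraic functions are semi-algebraic) are precisely those stated in Appendix \ref{sec:app:kl}.
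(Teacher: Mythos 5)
Your proposal is correct and follows essentially the same route as the paper's proof: both establish properness and lower semicontinuity from the polynomial-plus-indicator structure of $J$, invoke Proposition \ref{prop:semi_algebraic} to conclude that the Stiefel-type constraint sets and hence $J$ are semi-algebraic, and then apply Lemma \ref{lem:semi-algebraic-fun} together with the observation that KKT points lie in ${\rm dom}\,J$. Your write-up simply fills in a few bookkeeping details (e.g., that the constraint sets are zero sets of polynomials) that the paper leaves implicit.
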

\begin{proof}
	Since $J(\cdot)$ is given by the sum of polynomial functions and indicator functions of   closed sets, $J(\cdot)$ is therefore proper and l.s.c..
	
	 On the other hand, the constrained sets in \eqref{prob:ortho_main_max_omega} are all Stiefel manifolds; then Proposition \ref{prop:semi_algebraic} (see also \cite[Example 2]{bolte2014proximal}) demonstrates that they are   semi-algebraic sets, and their indicator functions are semi-algebraic functions. Again, Proposition \ref{prop:semi_algebraic} shows that $J(\cdot)$, as the finite sum of semi-algebraic functions, is itself semi-algebraic as well. Lemma \ref{lem:semi-algebraic-fun} thus tells us that $J(\cdot)$ satisfies the KL property at any point of ${\rm dom}J =\{(\mathbf u_{j,i},\omega_i)\mid J(\mathbf u_{j,i},\omega_i) <+\infty \}$. It is clear that any KKT point of \eqref{prob:ortho_main_max_omega} is in ${\rm dom}J$. Thus the claim is true. 
\end{proof}

\begin{proof}[Proof of Theorem \ref{th:global_conv_positive_e}]
Lemma \ref{lem:hpothesis_satisfied} shows that the {\bf H1}, {\bf H2} and {\bf H3} are satisfied.  Theorem \ref{th:sub_convergence_positive_e} already proves that any limit point $(\uomega{*})$ is a KKT point of \eqref{prob:ortho_main_max_omega}. Thus by Lemma \ref{lem:kl_J_positive_e}, $J(\cdot)$ has the KL property at $(\mathbf u^*_{j,i},\omega^*_i)$. These together with Theorem \ref{th:kl} demonstrate  the assertion.
\end{proof}


\subsection{Proof of Theorem \ref{th:global_conv_zero_e}}\label{sec:proof:global_conv_zero_e}
In the case that $\epsilon_1=\epsilon_2=0$,   Theorem \ref{th:kl} cannot be applied anymore, because {\bf H1} in the previous subsection is not satisfied, due to that the inequality  in Theorem \ref{th:sufficient_dec_epsilon0} was only established locally.  Nevertheless, in this subsection we show that if  $(\uomega{k})$ is sufficiently close to $(\uomega{*})$, then together with the KL inequality \eqref{eq:kl_property}, all the points after $(\uomega{k})$ will converge  to $(\uomega{*})$.

When $\epsilon_1=\epsilon_2=0$, we redefine   $J_0(\cdot)$ and the problem as 
	 \begin{equation}
\label{prob:ortho_main_min_omega_variant_zero_e}
\begin{split}
&\min~ J_0(\mathbf u_{j,i}) = J_0(\mathbf u_{j,i},\omega_i):=-H(\mathbf u_{i,j},\omega_i)   \\
&~~~~~~~~~~~~~~~~~~~~~~+ \sum^{d-t}_{j=1}\sum^R_{i=1}\indicatorf{j,i}{\mathbf u_{j,i}} + \sum^d_{j=d-t+1}\indicatorf{j}{U_j} + \indicatorf{\boldsymbol{ \omega}}{\boldsymbol{ \omega}}.
\end{split}
\end{equation}

In what follows, we denote   $\Delta^{k,k+1}:=   (\mathbf u^{k}_{j,i},\omega^k_i   ) - (\mathbf u^{k+1}_{j,i},\omega^{k+1}_i  )    $.  Theorem \ref{th:sufficient_dec_epsilon0} is restated as follows by using the language of Lemma \ref{lem:sufficient_dec_zero_e_in_a_ball}.
\begin{lemma}[Restatement of Theorem \ref{th:sufficient_dec_epsilon0}]\label{lem:sufficient_dec_zero_e}
	Under the setting of Theorem \ref{th:sufficient_dec_epsilon0}, there exist a positive constant $c_3>0$ and an $\alpha_0>0$, such that  if   $(\uomega{\bar k}) \in \ball{\alpha_0}$,     then
		\[
					      \setlength\abovedisplayskip{2pt}
		\setlength\abovedisplayshortskip{2pt}
		\setlength\belowdisplayskip{2pt}
		\setlength\belowdisplayshortskip{2pt}
		J_0(\mathbf u^{\bar k},\omega^{\bar k }_i)- J_0(\mathbf u^{\bar k +1}  ,\omega^{\bar k +1}_i )\geq \frac{c_3}{2}\bignorm{\deltak{\bar k}{\bar k +1}}^2.
		\]
\end{lemma}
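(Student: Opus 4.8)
The plan is to observe that this lemma is merely a reformulation of Theorem \ref{th:sufficient_dec_epsilon0} in the language of the minimization objective $J_0$, so that the proof reduces to unwinding definitions. First I would record that every iterate produced by Algorithm \ref{alg:main} is feasible for \eqref{prob:ortho_main_min_omega_variant_zero_e}: line 5 normalizes $\mathbf u^k_{j,i}$ for $1\le j\le d-t$, line 16 normalizes $\boldsymbol{\omega}^k$, and the polar decomposition in line 14 guarantees $(U^k_j)^\top U^k_j = I$ for $d-t+1\le j\le d$. Consequently every indicator term $\indicatorf{j,i}{\cdot}$, $\indicatorf{j}{\cdot}$ and $\indicatorf{\boldsymbol{ \omega}}{\cdot}$ in the definition of $J_0$ vanishes along the sequence, so that $J_0(\uomega{k}) = -H(\uomega{k})$ for every $k$.

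With this identification, I would compute
\[
J_0(\mathbf u^{\bar k},\omega^{\bar k}_i) - J_0(\mathbf u^{\bar k+1},\omega^{\bar k+1}_i) = H(\uomega{\bar k+1}) - H(\uomega{\bar k}),
\]
which is exactly the left-hand side appearing in Theorem \ref{th:sufficient_dec_epsilon0}. Taking the same $\alpha_0>0$ and $c_3>0$ furnished by that theorem and assuming $(\uomega{\bar k})\in\ball{\alpha_0}$, the lower bound reads $\tfrac{c_3}{2}\sum^d_{j=1}\sum^R_{i=1}\|\mathbf u^{\bar k+1}_{j,i}-\mathbf u^{\bar k}_{j,i}\|^2 + \tfrac{c_3}{2}\|\boldsymbol{ \omega}^{\bar k+1}-\boldsymbol{ \omega}^{\bar k}\|^2$. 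Finally I would recognize, directly from the definition $\deltak{\bar k}{\bar k+1} = (\mathbf u^{\bar k}_{j,i},\omega^{\bar k}_i)-(\mathbf u^{\bar k+1}_{j,i},\omega^{\bar k+1}_i)$, that this sum equals $\tfrac{c_3}{2}\bignorm{\deltak{\bar k}{\bar k+1}}^2$, which yields the claimed estimate.

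Because the argument is purely a change of notation and a sign flip, there is no genuine obstacle; the only points demanding care are confirming feasibility of the iterates (so the indicators vanish and the identity $J_0=-H$ along the sequence is legitimate) and transporting the local scope of Theorem \ref{th:sufficient_dec_epsilon0} verbatim. I would retain the same constants $c_3>0$ and $\alpha_0>0$, and emphasize—exactly as in Theorem \ref{th:sufficient_dec_epsilon0} and item 1 of Remark \ref{rmk:1}—that the inequality is asserted only for those indices $\bar k$ whose iterate lies in $\ball{\alpha_0}$ and does not automatically propagate to $\bar k+1,\bar k+2,\ldots$. This localized form is precisely what the global-convergence argument of Section \ref{sec:proof:global_conv_zero_e} is designed to exploit.
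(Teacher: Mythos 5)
Your proposal is correct and follows exactly the route the paper intends: the paper gives no separate proof for this lemma precisely because it is a definitional restatement of Theorem \ref{th:sufficient_dec_epsilon0}, obtained by noting that the iterates are feasible (so the indicator terms vanish and $J_0=-H$ along the sequence) and that the right-hand side of Theorem \ref{th:sufficient_dec_epsilon0} equals $\frac{c_3}{2}\bignorm{\deltak{\bar k}{\bar k+1}}^2$. Your care in preserving the local scope of the inequality is also consistent with Remark \ref{rmk:1}.
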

When $\epsilon_1=\epsilon_2=0$, define  $\hat{\mathbf u}^k_{j,i}$ and $\hat V_j^k$ similar to those in Lemma \ref{lem:hpothesis_satisfied}.       Using an analogous argument, we have
\begin{lemma}\label{lem:h2_zero_e}
	Assume that $\epsilon_1=\epsilon_2=0$. Then $(  {\mathbf v}^{k+1}_{1,1} -\hat{\mathbf v}^{k+1}_{1,1} ,\ldots,  {\mathbf v}^{k+1}_{d-t,R} -\hat{\mathbf v}^{k+1}_{d-t,R},   U^{k+1}_{d-t+1}-\hat U^{k+1}_{d-t+1},\ldots,   U^{k+1}_d - \hat U^{k+1}_d, 0 ) \in \partial J_0(\mathbf u^{k+1}_{k,i}, \omega^{k+1}_i )$, $\forall k$. 
There exist a positive constant $b_0>0$ such that for any $k$, 
\begin{small}
\begin{equation*}
\begin{split}
  \bignorm{(  {\mathbf v}^{k+1}_{1,1} -\hat{\mathbf v}^{k+1}_{1,1} ,\ldots,  {\mathbf v}^{k+1}_{d-t,R} -\hat{\mathbf v}^{k+1}_{d-t,R},   U^{k+1}_{d-t+1}-\hat U^{k+1}_{d-t+1},\ldots,   U^{k+1}_d - \hat U^{k+1}_d, 0 )} 
  \leq  b_0\bignorm{ \deltak{k}{k+1}   }.
\end{split}
\end{equation*}
\end{small}
 The above inequality implies that
 \[					      \setlength\abovedisplayskip{3pt}
 \setlength\abovedisplayshortskip{3pt}
 \setlength\belowdisplayskip{3pt}
 \setlength\belowdisplayshortskip{3pt}
 {\rm dist}(0, \partial J_0( \mathbf u^{k+1}_{j,i},\omega^{k+1}_i) ) \leq b_0 \bignorm{ \deltak{k}{k+1}   },~\forall k.
 \]
\end{lemma}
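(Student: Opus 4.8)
The plan is to transcribe the proof of Lemma~\ref{lem:hpothesis_satisfied}, setting the shift parameters to zero so that $\tilde{\mathbf v}^{k+1}_{j,i}$ reduces to $\mathbf v^{k+1}_{j,i}\omega^k_i$ and $\tilde V^{k+1}_j$ to $V^{k+1}_j\Omega^k$. First I would establish the subgradient inclusion block by block. For $1\leq j\leq d-t$, the update makes $\mathbf u^{k+1}_{j,i}$ the normalized maximizer of $\innerprod{\cdot}{\mathbf v^{k+1}_{j,i}\omega^k_i}$ on the unit sphere, so by the same computation as \eqref{eq:proof:1} the vector $\mathbf v^{k+1}_{j,i}\omega^k_i$ lies in $\partial\,\indicatorf{j,i}{\mathbf u^{k+1}_{j,i}}$; combining with the partial gradient $\partial_{\mathbf u_{j,i}}J_0=-\hat{\mathbf v}^{k+1}_{j,i}+\partial\,\indicatorf{j,i}{\mathbf u^{k+1}_{j,i}}$, where $\hat{\mathbf v}^{k+1}_{j,i}:=\mathcal A\bigotimes_{l\neq j}\mathbf u^{k+1}_{l,i}\,\omega^{k+1}_i$ is the $\epsilon_1=0$ version of \eqref{eq:proof:global:11}, gives the $\mathbf u$-blocks as in \eqref{eq:proof:2}. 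For $d-t+1\leq j\leq d$, Lemma~\ref{lem:polar_max} applied to the polar decomposition $U^{k+1}_jH^{k+1}_j=V^{k+1}_j\Omega^k$ shows $V^{k+1}_j\Omega^k\in\partial\,\indicatorf{j}{U^{k+1}_j}$, and with $\partial_{U_j}J_0=-\hat V^{k+1}_j+\partial\,\indicatorf{j}{U^{k+1}_j}$ this furnishes the matrix blocks as in \eqref{eq:proof:3}--\eqref{eq:proof:4}. The $\boldsymbol\omega$-block is identical to \eqref{eq:proof:5}: line~16 makes $\boldsymbol\omega^{k+1}$ the maximizer of $\innerprod{\cdot}{\boldsymbol\sigma^{k+1}}$, whence $0\in\partial_{\boldsymbol\omega}J_0$. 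Assembling the blocks gives the stated membership in $\partial J_0(\uomega{k+1})$.

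It then remains to bound the norm of this element by $b_0\bignorm{\deltak{k}{k+1}}$, which I would do exactly as in \eqref{eq:proof:7} via Lemma~\ref{lem:app1}. The discrepancy $\mathbf v^{k+1}_{j,i}\omega^k_i-\hat{\mathbf v}^{k+1}_{j,i}$ splits into the change caused by replacing the trailing factors $\mathbf u^k_{j+1,i},\dots,\mathbf u^k_{d,i}$ by their updates inside the multilinear contraction, plus the change from $\omega^k_i$ to $\omega^{k+1}_i$. Since $\mathcal A$ is fixed and all factors lie on compact sets (unit spheres and Stiefel manifolds), the contraction is Lipschitz there, so the first part is dominated by $\sum_l\bignorm{\mathbf u^{k+1}_{l,i}-\mathbf u^k_{l,i}}$ and the second by $\bignorm{\boldsymbol\omega^{k+1}-\boldsymbol\omega^k}$, both of which are controlled by $\bignorm{\deltak{k}{k+1}}$; the matrix blocks $V^{k+1}_j\Omega^k-\hat V^{k+1}_j$ are treated the same way. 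Collecting constants produces a finite $b_0>0$ depending only on $d$, $H^\infty$, and a norm of $\mathcal A$. The distance bound is then immediate, since the exhibited element belongs to $\partial J_0(\uomega{k+1})$.

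The one point that is genuinely new relative to Lemma~\ref{lem:hpothesis_satisfied}, and which I expect to be the main obstacle, is ensuring the $\mathbf u$-block construction is well posed when $\epsilon_1=0$: here the normalization direction is $\mathbf v^{k+1}_{j,i}\omega^k_i$ itself, with no strictly positive shift to guarantee it is nonzero. I would resolve this by invoking the degeneracy reduction preceding \eqref{eq:descent_normalization_e1is0}, namely that after shrinking $R$ one may assume $|\sigma^k_i|\geq c_1>0$ for all $i$ and $k$, together with the lower bound \eqref{eq:lower_bound_v_omega}; this keeps $\bigfnorm{\mathbf v^{k+1}_{j,i}\omega^k_i}$ bounded away from zero, so the normalization, the subgradient inclusion, and the constant $b_0$ all remain meaningful and uniform in $k$. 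Everything else is a verbatim specialization of the previous argument with $\epsilon_1=\epsilon_2=0$.
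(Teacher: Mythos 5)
Your proposal is correct and coincides with the paper's intended argument: the paper gives no separate proof of this lemma, stating only that it follows ``using an analogous argument'' from Lemma \ref{lem:hpothesis_satisfied}, which is precisely the block-by-block specialization to $\epsilon_1=\epsilon_2=0$ that you carry out. Your extra care about the well-posedness of the normalization when the shift vanishes is handled the same way the paper does, via the standing assumption $|\sigma_i^k|\geq c_1>0$ introduced before \eqref{eq:descent_normalization_e1is0}.
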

Recall that the setting of Theorem \ref{th:global_conv_zero_e} requires the existence of a limit point with $V^*_j$'s having full column rank, $d-t+1\leq j\leq d$.
The following inequality is due to the above two lemmas and the KL property \eqref{eq:kl_property}. Here we denote $J_0^k = J_0( \mathbf u^k_{j,i},\omega^k_i )$ and $J_0^\infty = J_0( \mathbf u^*_{j,i},\omega^*_i  )$ for notational simplicity.
\begin{lemma}\label{lem:decreasing_l_large}
Under the setting of Theorem \ref{th:global_conv_zero_e},  there is an $\alpha_0>0$, such that if $(\uomega{\bar k})\in \ball{\alpha_0}$,   then
	\begin{equation}\label{eq:proof:11}					      \setlength\abovedisplayskip{3pt}
	\setlength\abovedisplayshortskip{3pt}
	\setlength\belowdisplayskip{3pt}
	\setlength\belowdisplayshortskip{3pt}
\bignorm{ \deltak{\bar k}{\bar k+1}  } \leq \frac{1}{2}\bignorm{ \deltak{\bar k-1}{\bar k }   } + \frac{c_4}{c_3}\bigxiaokuohao{ \psi( J_0^{\bar k } - J_0^\infty  ) - \psi( J_0^{\bar k+1}  - J^\infty_0 )    },
	\end{equation}
	where $c_4\geq \frac{2b_0}{c_3}$, and $\psi$ is defined in Definition \ref{def:kl}.
\end{lemma}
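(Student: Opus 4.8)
The plan is to run the local version of the Kurdyka--\L{}ojasiewicz argument of Attouch--Bolte--Svaiter, feeding in the three ingredients already established: the local sufficient decrease (Lemma \ref{lem:sufficient_dec_zero_e}), the subgradient estimate (Lemma \ref{lem:h2_zero_e}), and the concavity of $\psi$ supplied by the KL property \eqref{eq:kl_property}. I first fix $\alpha_0$ small enough that $\ball{\alpha_0}$ lies inside the KL neighborhood $\mathcal N$ of $(\uomega{*})$ and that $J_0^{\bar k}-J_0^\infty$ stays in the admissible interval $[0,\bar\epsilon)$ whenever $(\uomega{\bar k})\in\ball{\alpha_0}$; this is legitimate because $J_0^k\downarrow J_0^\infty$ on the ball. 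If $J_0^{\bar k}=J_0^\infty$, the local descent forces $\deltak{\bar k}{\bar k+1}=0$ and \eqref{eq:proof:11} is trivial, so I may assume $J_0^{\bar k}>J_0^\infty$, which is exactly what is needed to apply the KL inequality at $(\uomega{\bar k})$.

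First I would use concavity of $\psi$: since $\psi$ is concave and differentiable with positive derivative, $\psi(s)-\psi(t)\geq \psi'(s)(s-t)$ for $s\geq t\geq 0$. Taking $s=J_0^{\bar k}-J_0^\infty$ and $t=J_0^{\bar k+1}-J_0^\infty$ (here $s\geq t$ by Lemma \ref{lem:sufficient_dec_zero_e}) gives
\[
\psi(J_0^{\bar k}-J_0^\infty)-\psi(J_0^{\bar k+1}-J_0^\infty)\geq \psi'(J_0^{\bar k}-J_0^\infty)\,\bigxiaokuohao{ J_0^{\bar k}-J_0^{\bar k+1} }.
\]
Next I would replace the two factors on the right. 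Lemma \ref{lem:sufficient_dec_zero_e} bounds $J_0^{\bar k}-J_0^{\bar k+1}\geq \frac{c_3}{2}\bignorm{\deltak{\bar k}{\bar k+1}}^2$, while the KL inequality at $(\uomega{\bar k})$ combined with the distance bound ${\rm dist}(0,\partial J_0(\uomega{\bar k}))\leq b_0\bignorm{\deltak{\bar k-1}{\bar k}}$ from Lemma \ref{lem:h2_zero_e} (read at index $\bar k$, i.e. with $k=\bar k-1$) gives $\psi'(J_0^{\bar k}-J_0^\infty)\geq 1/\bigxiaokuohao{ b_0\bignorm{\deltak{\bar k-1}{\bar k}} }$. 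Substituting both yields
\[
\bignorm{\deltak{\bar k}{\bar k+1}}^2\leq \frac{2b_0}{c_3}\,\bignorm{\deltak{\bar k-1}{\bar k}}\,\bigxiaokuohao{ \psi(J_0^{\bar k}-J_0^\infty)-\psi(J_0^{\bar k+1}-J_0^\infty) }.
\]

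The final step is Young's inequality $\sqrt{uv}\leq \frac12 u+\frac12 v$ applied with $u=\bignorm{\deltak{\bar k-1}{\bar k}}$ and $v=\frac{2b_0}{c_3}\bigxiaokuohao{ \psi(J_0^{\bar k}-J_0^\infty)-\psi(J_0^{\bar k+1}-J_0^\infty) }$, which produces precisely \eqref{eq:proof:11}; since the $\psi$-difference is nonnegative, any choice of $c_4$ as large as stated ($c_4\geq \frac{2b_0}{c_3}$) preserves the bound. The step demanding the most care is not any computation but the localization bookkeeping: verifying that $(\uomega{\bar k})$ really sits in $\mathcal N$ with $J_0^{\bar k}-J_0^\infty\in[0,\bar\epsilon)$, and that the descent of Lemma \ref{lem:sufficient_dec_zero_e} holds at index $\bar k$. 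This locality is exactly what prevents a direct appeal to Theorem \ref{th:kl} (hypothesis {\bf H1} fails globally when $\epsilon_1=\epsilon_2=0$) and forces the telescoping/induction to be carried out by hand in the subsequent steps of the proof of Theorem \ref{th:global_conv_zero_e}.
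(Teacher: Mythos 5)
Your proposal is correct and follows essentially the same route as the paper: dispose of the degenerate cases $\deltak{\bar k}{\bar k+1}=0$ and $J_0^{\bar k}=J_0^\infty$, shrink $\alpha_0$ so that the KL inequality of Lemma \ref{lem:kl_J_positive_e} and the local descent of Lemma \ref{lem:sufficient_dec_zero_e} hold simultaneously on $\ball{\alpha_0}$, then chain the concavity of $\psi$, the KL inequality, the subgradient bound of Lemma \ref{lem:h2_zero_e} at index $\bar k$, and the sufficient decrease, finishing with $\sqrt{uv}\leq\frac{u+v}{2}$. The only cosmetic difference is how the constant is packaged in the final Young step (your split yields $\frac{b_0}{c_3}$ where the paper arranges $\frac{b_0}{c_3 c_4}$ and $\frac{c_4}{2}$); both versions absorb this into $c_4$ in the same harmless way.
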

\begin{proof}
	First, we  assume that $ (\mathbf u^{\bar k+1}_{j,i},\omega^{\bar k +1}_i) \neq (\mathbf u^{\bar k }_{j,i},\omega^{\bar k }_i)   $, otherwise  the inequality in question holds naturally. On the other hand, if  $J_0^{\bar k } = J_0^\infty$, then Proposition \ref{prop:nondecreasing} tells us that all the objective functions related to the points after $(\uomega{\bar k} )$ takes the same value. In particular, Lemma \ref{lem:sufficient_dec_zero_e} implies that    $(\uomega{\bar k+1}) =  (\uomega{\bar k}   )$. Using $(\uomega{\bar k+1}  ) $ in place of $ (\uomega{\bar k}   )$ and repeating the argument, we see that $ ( \uomega{\bar k}   ) =  (\uomega{\bar k+1}   ) =  ( \uomega{\bar k+2})   =\cdots =  ( \uomega{*} )  $, and \eqref{eq:proof:11} also holds.  Therefore,  we assume that $J_0^{\bar k}  > J^\infty_0$.

  Lemma \ref{lem:kl_J_positive_e} shows that $J_0(\cdot)$ has the KL property at $(\uomega{*})$.  
  From Definition \ref{def:kl} and tailored to our setting, there is an  $\alpha_1>0$, such that for all   $(\mathbf u_{j,i},\omega_i)\in \mathbb B_{\alpha_1}( \mathbf u^*_{j,i},\omega^*_i   ) \cap {\rm dom}J_0$, it holds that
\begin{equation}\label{eq:proof:9} \setlength\abovedisplayskip{3pt}
\setlength\abovedisplayshortskip{3pt}
\setlength\belowdisplayskip{3pt}
\setlength\belowdisplayshortskip{3pt}
  \psi^\prime( J_0(\mathbf u_{j,i},\omega_i   ) -J^\infty_0 ) {\rm dist}(0, \partial J_0(  \mathbf u_{j,i},\omega_i     )  ) \geq 1,
\end{equation}
  where $\psi$ is a continuously differentiable  and concave function with positive derivatives. Without loss of generality, we assume that $\alpha_0<\alpha_1$, where 	    $\alpha_0$  was given in  Lemma \ref{lem:sufficient_dec_zero_e_in_a_ball}. Thus if $(\uomega{\bar k})\in \ball{\alpha_0}$, then Lemma \ref{lem:sufficient_dec_zero_e} and \eqref{eq:proof:9} hold together.
  
 On the other hand, from the concavity of $\psi(\cdot)$ we have
\[ \setlength\abovedisplayskip{3pt}
\setlength\abovedisplayshortskip{3pt}
\setlength\belowdisplayskip{3pt}
\setlength\belowdisplayshortskip{3pt}
\psi( J_0^{\bar k  } - J_0^\infty  ) - \psi( J_0^{\bar k +1}  - J^\infty_0 ) \geq \psi^\prime(  J^{\bar k  }_0 - J^\infty_0  )(  J_0^{\bar k  } - J_0^{\bar k +1}      ),
\]
which together with \eqref{eq:proof:9} and Lemmas  \ref{lem:sufficient_dec_zero_e} and \ref{lem:h2_zero_e}  yields that when $(\uomega{\bar k})\in \ball{\alpha_0}$,
\begin{eqnarray*}
\label{eq:proof:10} \setlength\abovedisplayskip{3pt}
\setlength\abovedisplayshortskip{3pt}
\setlength\belowdisplayskip{3pt}
\setlength\belowdisplayshortskip{3pt}
\frac{c_3}{2}\bignorm{ \deltak{\bar k}{\bar k +1}  }^2 &\leq& J^{\bar k }_0 - J^{\bar k +1}_0 
 \leq  \frac{  \psi( J_0^{\bar k  } - J_0^\infty  ) - \psi( J_0^{\bar k +1}  - J^\infty_0 )  }{   \psi^\prime(  J^{\bar k }_0 - J^\infty_0  ) }\nonumber\\
&\leq& {\rm dist}(0, \partial J_0(  \mathbf u^{\bar k }_{j,i},\omega^{\bar k }_i )  ) \bigxiaokuohao{ \psi( J_0^{\bar k  } - J_0^\infty  ) - \psi( J_0^{\bar k +1}  - J^\infty_0 )    }\nonumber\\
&\leq& b_0 \bignorm{ \deltak{\bar k -1}{\bar k  }   } \bigxiaokuohao{ \psi( J_0^{\bar k  } - J_0^\infty  ) - \psi( J_0^{\bar k +1}  - J^\infty_0 )    }\nonumber\\
&=& b_0/c_4 \bignorm{ \deltak{\bar k-1}{\bar k  }   } \cdot c_4\bigxiaokuohao{ \psi( J_0^{\bar k } - J_0^\infty  ) - \psi( J_0^{\bar k+1 }  - J^\infty_0 )    },
\end{eqnarray*}
where $c_4$ is a constant large enough such that     $c_4\geq \frac{2b_0}{c_3}$ . Using $\sqrt{ab}\leq \frac{a+b}{2}$, we obtain
\begin{equation*} \setlength\abovedisplayskip{3pt}
\setlength\abovedisplayshortskip{3pt}
\setlength\belowdisplayskip{3pt}
\setlength\belowdisplayshortskip{3pt}
\bignorm{ \deltak{\bar k}{\bar k +1}  } \leq \frac{1}{2}\bignorm{ \deltak{\bar k -1}{\bar k }   } + \frac{c_4}{c_3}\bigxiaokuohao{ \psi( J_0^{\bar k  } - J_0^\infty  ) - \psi( J_0^{\bar k+1 }  - J^\infty_0 )    },
\end{equation*}
completing the proof.
\end{proof}

Now with the help of the above lemma, Lemma \ref{lem:sufficient_dec_zero_e_in_a_ball} and the KL property, by using the induction method we can extend the above lemma to a general sense. 
\begin{lemma}\label{lem:decreasing_k_large}
	Under the setting of Theorem \ref{th:global_conv_zero_e}, there is a large enough $\bar k$, such that when $k\geq \bar k$, 
	\begin{equation}\label{eq:proof:12}
				      \setlength\abovedisplayskip{2pt}
	\setlength\abovedisplayshortskip{2pt}
	\setlength\belowdisplayskip{2pt}
	\setlength\belowdisplayshortskip{2pt}
\bignorm{ \deltak{k}{k+1}  } \leq \frac{1}{2}\bignorm{ \deltak{k-1}{k}   } + \frac{c_4}{c_3}\bigxiaokuohao{ \psi( J_0^{k} - J_0^\infty  ) - \psi( J_0^{k+1}  - J^\infty_0 )    }.
	\end{equation}
\end{lemma}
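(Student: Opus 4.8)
The plan is to run a Kurdyka--\L{}ojasiewicz \emph{trapping} argument: once an iterate enters a sufficiently small ball around $(\uomega{*})$, I want to show that every subsequent iterate stays in $\ball{\alpha_0}$, the ball on which the local sufficient-decrease estimate of Lemma~\ref{lem:sufficient_dec_zero_e} (equivalently, the per-step inequality~\eqref{eq:proof:11} of Lemma~\ref{lem:decreasing_l_large}) is valid. Once that is secured, \eqref{eq:proof:12} is nothing but \eqref{eq:proof:11} applied at each index $k\ge\bar k$, so the entire content of the lemma is the induction keeping $(\uomega{k})$ inside $\ball{\alpha_0}$.

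First I would assemble the ingredients. Since $(\uomega{*})$ is the assumed limit point with $V^*_j$ of full column rank, Lemma~\ref{lem:sufficient_dec_zero_e_in_a_ball} and Theorem~\ref{th:sufficient_dec_epsilon0} supply a radius $\alpha_0>0$ and a constant $c_3>0$ for which the local descent holds on $\ball{\alpha_0}$; by Proposition~\ref{prop:nondecreasing} the values $J_0^k$ decrease monotonically to $J_0^\infty$, and $\psi$ is continuous with $\psi(0)=0$. I would then choose the base index $\bar k$ with care. Taking a subsequence $(\uomega{k_l})\to(\uomega{*})$, for $l$ large one has $(\uomega{k_l})\in\ball{\alpha_0}$, so the local estimate gives $\tfrac{c_3}{2}\bignorm{\deltak{k_l}{k_l+1}}^2\le J_0^{k_l}-J_0^{k_l+1}\to 0$, whence $\bignorm{\deltak{k_l}{k_l+1}}\to 0$. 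The device is to set $\bar k=k_l+1$: then the incoming step $\bignorm{\deltak{\bar k-1}{\bar k}}=\bignorm{\deltak{k_l}{k_l+1}}\to 0$, the point $(\uomega{\bar k})$ still converges to $(\uomega{*})$, and $\psi(J_0^{\bar k}-J_0^\infty)\to 0$. Hence I may fix $\bar k$ so large that
\[
\bignorm{(\uomega{\bar k})-(\uomega{*})}+\bignorm{\deltak{\bar k-1}{\bar k}}+\frac{2c_4}{c_3}\,\psi(J_0^{\bar k}-J_0^\infty)<\alpha_0 .
\]

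The core is an induction on $k\ge\bar k$ proving $(\uomega{k})\in\ball{\alpha_0}$. Assuming $(\uomega{\bar k}),\dots,(\uomega{k})$ all lie in $\ball{\alpha_0}$, inequality~\eqref{eq:proof:11} holds at each index $\bar k,\dots,k$; summing and telescoping the $\psi$-differences, the shifted sum $\sum\bignorm{\deltak{l-1}{l}}$ lets the factor $\tfrac12$ absorb half of the step lengths, yielding
\[
\sum_{l=\bar k}^{k}\bignorm{\deltak{l}{l+1}}\le \bignorm{\deltak{\bar k-1}{\bar k}}+\frac{2c_4}{c_3}\,\psi(J_0^{\bar k}-J_0^\infty),
\]
after discarding the tail $-\psi(J_0^{k+1}-J_0^\infty)\le 0$. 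A triangle inequality then bounds $\bignorm{(\uomega{k+1})-(\uomega{*})}$ by $\bignorm{(\uomega{\bar k})-(\uomega{*})}$ plus this partial sum, which by the choice of $\bar k$ is strictly below $\alpha_0$; hence $(\uomega{k+1})\in\ball{\alpha_0}$, closing the induction. With every iterate past $\bar k$ trapped in $\ball{\alpha_0}$, Lemma~\ref{lem:decreasing_l_large} applies at each such $k$ and \eqref{eq:proof:12} follows.

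The main obstacle is the circularity intrinsic to the $\epsilon_1=\epsilon_2=0$ regime: the per-step bound~\eqref{eq:proof:11} is available \emph{only} inside $\ball{\alpha_0}$, yet remaining in $\ball{\alpha_0}$ is precisely what must be deduced from that bound. The telescoping-with-$\tfrac12$ mechanism defuses this, but only provided the incoming difference $\bignorm{\deltak{\bar k-1}{\bar k}}$ is small; because the orthonormal blocks need not display global step-diminishing when $\epsilon_2=0$, I cannot simply invoke $\bignorm{\deltak{k-1}{k}}\to 0$ as in the $\epsilon_i>0$ case. Taking $\bar k$ one step past a subsequence point, so that the incoming step is controlled by the local descent already available at $(\uomega{k_l})\in\ball{\alpha_0}$, is the delicate point I expect to require the most care in making the base case rigorous.
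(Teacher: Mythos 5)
Your proposal is correct and follows essentially the same route as the paper: choose $\bar k$ so that the distance to $(\uomega{*})$, the incoming step $\bignorm{\deltak{\bar k-1}{\bar k}}$, and $\psi(J_0^{\bar k}-J_0^\infty)$ are all small, then run an induction that keeps every subsequent iterate inside $\ball{\alpha_0}$ by summing the per-step inequality, telescoping the $\psi$-differences, and absorbing half the step lengths via the $\tfrac12$ factor. Your device of taking $\bar k=k_l+1$ one step past a subsequence index to control the incoming step is a welcome explicit justification of a condition the paper's \eqref{eq:proof:13} simply asserts; otherwise the two arguments coincide up to bookkeeping of the constants.
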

\begin{proof}
Similar to the previous lemma, we can without loss of generality assume that $(\uomega{k+1}) \neq (\uomega{k})$, and $J^k_0> J^\infty_0$.
	
	Recall that in the proof of the previous lemma, if $(\uomega{\bar k})\in\ball{\alpha_0}$, then Lemma \ref{lem:sufficient_dec_zero_e}, Lemma \ref{lem:decreasing_l_large}, and KL inequality \eqref{eq:proof:9} hold. Let  $\alpha <  {\alpha_0} $. Thus there exists a sufficiently large $\bar k$, such that 
%
	\begin{equation}\label{eq:proof:13} \setlength\abovedisplayskip{3pt}
	\setlength\abovedisplayshortskip{3pt}
	\setlength\belowdisplayskip{3pt}
	\setlength\belowdisplayshortskip{3pt}
	(\uomega{\bar k})\in \mathbb B_{\alpha/4}(\uomega{*}),~ \psi(J^{\bar k }_0 - J^\infty_0  ) <\frac{\alpha c_3}{8c_4},~
	~\bignorm{ \deltak{\bar k -1}{\bar k } }<\frac{\alpha}{8}.
	\end{equation}
	In what follows, we use the  induction method to prove that for all $k\geq \bar k$, 1) \eqref{eq:proof:12} holds; 2) $\bignorm{\deltak{k}{*}}<\alpha$.
	
	Clearly, \eqref{eq:proof:12} holds when $k=\bar k $ due to Lemma \ref{lem:decreasing_l_large}, and \eqref{eq:proof:13} means that $\bignorm{\deltak{k}{*}}<\alpha$.
	
Now assume that the assertion holds for $k=\bar k,\ldots, K$, i.e., \eqref{eq:proof:12} and $\bignorm{ \deltak{k}{*} }< \alpha$ holds for $k=\bar k ,\ldots,K$. When $k=K+1$, 
\begin{small}
\begin{eqnarray}
\bignorm{\deltak{K+1}{*}} &\leq&  \sum_{k=\bar k}^{K}\nolimits\bignorm{ \deltak{k}{k+1} } + \bignorm{ \deltak{\bar k}{*} } \nonumber
\\
&\leq& \frac{1}{2}\sum^K_{k=\bar k}\nolimits\bignorm{ \deltak{k-1}{k }  } + \frac{c_4}{c_3}\bigxiaokuohao{   \psi( J^{\bar k}_0 - J^\infty_0  ) -  \psi( J^{K+1}_0 - J^\infty_0)       }+ \bignorm{ \deltak{\bar k}{*} }. \label{eq:proof:14}
\end{eqnarray}
\end{small}
Subtracting $\frac{1}{2}\sum^{K-1}_{k=\bar k}\bignorm{ \deltak{k}{k+1} }$ from both sides of   the second inequality yields
\[ 
\bignorm{ \deltak{K}{K+1}} + \frac{1}{2}\sum^{K-1}_{k=\bar k}\nolimits\bignorm{ \deltak{k}{k+1}  } \leq \bignorm{ \deltak{\bar k-1}{\bar k} } +  \frac{c_4}{c_3}\bigxiaokuohao{   \psi( J^{\bar k}_0 - J^\infty_0  ) -  \psi( J^{K+1}_0 - J^\infty_0)       } ,
\]
which together with \eqref{eq:proof:13} means that
\begin{equation}\label{eq:proof:15} 
\frac{1}{2}\sum^{K-1}_{k=\bar k}\nolimits\bignorm{ \deltak{k}{k+1} } \leq \frac{\alpha}{8} + \frac{\alpha}{8} = \frac{\alpha}{4}.
\end{equation}
This combining with  \eqref{eq:proof:14} gives that
\begin{eqnarray*} \setlength\abovedisplayskip{3pt}
	\setlength\abovedisplayshortskip{3pt}
	\setlength\belowdisplayskip{3pt}
	\setlength\belowdisplayshortskip{3pt}
\bignorm{\deltak{K+1}{*}} &\leq & \frac{1}{2}\sum^{K-1}_{k=\bar k}\bignorm{\deltak{k}{k+1}} + \bignorm{\deltak{\bar k-1}{\bar k}} + \frac{c_4}{c_3}\bigxiaokuohao{   \psi( J^{\bar k}_0 - J^\infty_0  ) -  \psi( J^{K+1}_0 - J^\infty_0)       } + \bignorm{ \deltak{\bar k}{*} }\\
&\leq& \frac{\alpha}{4} + \frac{\alpha}{8} + \frac{\alpha}{8} +\frac{\alpha}{4}< \alpha.
\end{eqnarray*}
From the definition of $\alpha$, we have that $(\uomega{K+1}) \in\ball{\alpha_0}$, and so from Lemma \ref{lem:decreasing_l_large}, it holds that
\[
\bignorm{ \deltak{K+1}{K+2}  } \leq \frac{1}{2}\bignorm{ \deltak{K}{K+1}   } + \frac{c_4}{c_3}\bigxiaokuohao{ \psi( J_0^{K+1} - J_0^\infty  ) - \psi( J_0^{K+2}  - J^\infty_0 )    }.
\]
As a consequence, induction method shows that \eqref{eq:proof:12} and $\bignorm{\deltak{k}{*}}<\alpha$ for all $k=\bar k,\bar k+1,\ldots$. This completes the proof.
\end{proof}

\begin{remark}
The proof cannot be  recognized as a simple application of  \cite{bolte2014proximal,attouch2013convergence}, due to   that Lemma \ref{lem:sufficient_dec_zero_e} only holds near $(\uomega{*})$, which requires us to prove the existence of $\ball{\alpha}$ such that the iterates after $(\uomega{\bar k})$ cannot go outside the ball, so that Lemmas \ref{lem:sufficient_dec_zero_e} and \ref{lem:decreasing_l_large} still valid. This  differs our proof from that of \cite{bolte2014proximal,attouch2013convergence}. 
\end{remark}

\begin{proof}[Proof of Theorem \ref{th:global_conv_zero_e}] Lemma \ref{lem:decreasing_k_large} already demonstrates the existence of $\bar k$ such that $\sum^\infty_{k=\bar k}\bignorm{ \deltak{k}{k+1}  }<+\infty$, namely, $\{\uomega{k} \}$ is a Cauchy sequence. Thus $\lim_{k\rightarrow\infty} (\uomega{k}) = (\uomega{*})$ follows directly. This together with Theorem \ref{th:sub_convergence_zero_e} gives the desired results.
\end{proof}

\section{Numerical Experiments}\label{sec:numer}
We evaluate the performance of Algorithm \ref{alg:main} with different $\epsilon_i$, and compare the performance of Algorithm \ref{alg:main} initialized by Procedure \ref{proc:init} and by random initializers.
All the   computations are conducted on an Intel i7-7770 CPU desktop computer with 32 GB of RAM. The supporting software is Matlab R2015b.  The Matlab  package Tensorlab  \cite{tensorlab2013} is employed for tensor operations.  The Matlab code of $\epsilon$-ALS and the initialization procedure is available   at \url{https://github.com/yuningyang19/epsilon-ALS}.

The tensors are generated as $\mathcal A = \mathcal B/\bignorm{\mathcal B} + \beta\cdot\mathcal N/\|\mathcal N\|$, where $\mathcal B = \sum^R_{i=1} \sigma_i\bigotimesu $, $\mathcal N$ is an unstructured tensor, and $\beta$ denotes the noise level, similar to \cite{sorensen2012canonical}. Among all experiments we set $\beta=0.1$. Here   $U_j$ and $\mathcal N$ are randomly drawn from a uniformly distribution in $[-1,1]$. The last $t$ $U_j$ are then made to be columnwisely orthonormal, while the first $(d-t)$ ones are columnwisely normalized.
The stopping criterion is 
$ \|(\mathbf u_{j,i}^{k+1}  )  -(\mathbf u_{j,i}^k ) \|/ \|(\mathbf u_{j,i}^k)\|  \leq 10^{-4}$ 
or $k\geq 2000$.

The update  of $\mathbf u_{j,i}$,  as having been commented in lines 4 and 10 of Algorithm \ref{alg:main},  can be done in parallel. In fact, these lines are equivalent to
\[
\tilde V^{k+1}_j = A_{(j)}\bigxiaokuohao{ U^{k+1}_1\odot \cdots \odot U^{k+1}_{j-1}\odot U^k_{j+1}\odot \cdots \odot U^k_d  }\cdot\Omega^k+\epsilon_i U^k_j,
\]
so as the inner ``for'' loop can be avoided. Here $A_{(j)} \in \mathbb R^{n_j \times \prod^d_{j^\prime\neq j   } n_{j\prime}    }$ is the mode-$j$ unfolding of $\mathcal A$, and $\odot$ denotes the Khatri-Rao product  \cite{kolda2010tensor}.  The \texttt{polar\_decomp} procedure in the algorithm is implemented based on the Matlab built-in function \texttt{svd}.

\begin{table}[h]
	\centering
	\caption{\label{tab:numer1}Performance of   $\epsilon$-ALS with different $\epsilon_i$. $n_j\geq R$, $\forall j$; $\beta=0.1$.}
	\setlength{\tabcolsep}{1.2mm}			
	\begin{mytabular}{llrrr|rrr|rrr|rrr}
		\toprule
		&     &  \multicolumn{3}{c}{$\epsilon_i=0$ }  &   \multicolumn{3}{c}{$\epsilon_i=10^{-8}$ }  &   \multicolumn{3}{c}{$\epsilon_i=10^{-6}$ } &   \multicolumn{3}{c}{$\epsilon_i=10^{-4}$ }  \\
		\toprule
		$t$      &      $(n_1,\ldots,n_4;R)$   & \multicolumn{1}{l}{Iter} & \multicolumn{1}{l}{time} & \multicolumn{1}{l}{rel.err} & \multicolumn{1}{l}{Iter} & \multicolumn{1}{l}{time} & \multicolumn{1}{l}{rel.err}  & \multicolumn{1}{l}{Iter} & \multicolumn{1}{l}{time} & \multicolumn{1}{l}{rel.err} & \multicolumn{1}{l}{Iter} & \multicolumn{1}{l}{time} & \multicolumn{1}{l}{rel.err}  \\
		\toprule
		1   & (10,10,10,10;5) & 6.2   & 0.03  & 3.80E-02  & 6.2   & 0.03 & 3.80E-02  & 6.2   & 0.03 & 3.80E-02   & 9.4  & 0.03  & 3.80E-02  \\
		1   & (10,10,10,30;10) & 14.6  & 0.07 & 5.65E-02  & 14.6 & 0.07 & 5.65E-02   & 15.0 & 0.07  & 5.65E-02   & 44.8  & 0.11 & 5.64E-02  \\
		1   & (20,20,20,20;10) & 14.0 & 0.09  & 4.68E-02  & 14.0 & 0.09  & 4.68E-02  & 15.6  & 0.10 & 4.68E-02  & 56.4 & 0.19 & 4.69E-02   \\
		1   & (20,20,20,30;10) & 8.1  & 0.09  & 4.60E-02  & 8.1  & 0.09 & 4.60E-02   & 9.0 & 0.09   & 4.60E-02  & 59.2  & 0.26 & 4.60E-02  \\
		1   & (30,30,30,30;10) & 10.2 & 0.21  & 3.40E-02  & 10.3 & 0.19 & 3.40E-02   & 13.3 & 0.22   & 3.40E-02 & 67.8   & 0.66 & 3.41E-02  \\
		1   & (30,30,30,40;10) & 10.9  & 0.27 & 3.51E-02  & 10.9  & 0.26  & 3.51E-02  & 15.0 & 0.30  & 3.51E-02  & 98.8  & 1.21 & 3.52E-02  \\
		1   & (40,40,40,40;10) & 7.9   & 0.61  & 2.17E-02 & 7.9   & 0.57 & 2.17E-02  & 10.9  & 0.64 & 2.21E-02  & 47.3  & 1.55  & 2.17E-02 \\
		\midrule
		2   & (10,10,10,10;5) & 4.3   & 0.02 & 4.07E-02  & 4.3  & 0.02  & 4.07E-02  & 4.3   & 0.02 & 4.07E-02  & 6.6   & 0.02 & 4.07E-02  \\
		2   & (10,10,20,20;10) & 8.2   & 0.04 & 5.62E-02  & 8.2    & 0.04  & 5.62E-02& 8.6    & 0.04 & 5.62E-02 & 41.7  & 0.10 & 5.77E-02  \\
		2   & (20,20,20,20;10) & 7.9   & 0.05  & 4.81E-02 & 7.9  & 0.05  & 4.81E-02  & 8.7   & 0.05 & 4.81E-02  & 55.8   & 0.16  & 4.82E-02\\
		2   & (20,20,30,30;10) & 11.5  & 0.09 & 4.44E-02   & 12.4  & 0.09 & 4.43E-02  & 12.2  & 0.09 & 4.45E-02   & 135.1  & 0.57 & 4.42E-02  \\
		2   & (30,30,30,30;10) & 8.5 & 0.16   & 3.02E-02  & 8.5   & 0.16 & 3.02E-02  & 11.6 & 0.18  & 3.08E-02  & 140.2   & 1.20 & 3.10E-02 \\
		2   & (30,30,40,40;10) & 9.5   & 0.34 & 3.54E-02  & 9.6  & 0.34 & 3.54E-02   & 20.0  & 0.49 & 3.55E-02  & 116.0  & 1.82 & 3.52E-02  \\
		2   & (40,40,40,40;10) & 16.3  & 0.85 & 2.38E-02  & 10.9 & 0.70  & 2.44E-02  & 18.8  & 0.89 & 2.44E-02  & 87.7  & 2.59 & 2.45E-02  \\
		\midrule
		3   & (10,10,10,10;10) & 3.8   & 0.01  & 3.71E-02 & 3.8  & 0.01  & 3.71E-02  & 3.9   & 0.01 & 3.71E-02  & 6.5   & 0.01 & 3.71E-02  \\
		3   & (10,20,20,20;10) & 15.5  & 0.05& 8.80E-02   & 15.6  & 0.04  & 8.80E-02 & 16.9  & 0.05& 8.26E-02   & 130.5  & 0.29 & 8.44E-02  \\
		3   & (20,20,20,20;10) & 9.1   & 0.05  & 2.99E-02 & 9.1    & 0.04 & 2.99E-02 & 7.0  & 0.04  & 2.96E-02  & 48.5  & 0.14& 2.95E-02   \\
		3   & (20,30,30,30;10) & 4.2   & 0.09 & 1.94E-02  & 4.2   & 0.09 & 1.94E-02  & 5.1   & 0.09 & 1.94E-02  & 24.1  & 0.21& 1.93E-02   \\
		3   & (30,30,30,30;10) & 6.5   & 0.17 & 2.85E-02  & 6.5   & 0.17 & 2.85E-02  & 9.6   & 0.19 & 2.85E-02  & 123.2   & 1.17  & 2.85E-02\\
		3   & (30,40,40,40;10) & 12.1  & 0.62 & 4.80E-02  & 12.0 & 0.61   & 4.80E-02 & 22.2  & 0.81 & 4.80E-02  & 160.7 & 3.33 & 4.78E-02   \\
		3   & (40,40,40,40;10) & 2.9   & 0.73 & 1.14E-02  & 2.9   & 0.73 & 1.14E-02  & 3.3   & 0.74 & 1.14E-02  & 21.5  & 1.24 & 1.14E-02  \\
		\bottomrule
	\end{mytabular}%
\end{table}%
The evaluation of $\epsilon$-ALS initialized by Procedure \ref{proc:init} on different size of tensors are reported in Tables \ref{tab:numer1} and Table \ref{tab:numer2}. Here the size of the tensors in Table \ref{tab:numer1} satisfies $n_j\geq R$ for all $j$, while that in Table \ref{tab:numer2} is $n_{j_1} < R$, $1\leq {j_1} \leq d-t$, and $R\leq n_{j_2}$, $d-t+1\leq j_2\leq d$. 
The results are averaged over $50$ instances for each case. For convenience we set $\epsilon_1=\epsilon_2$, varying from $0$ to $10^{-4}$. In the table, 	`Iter' denotes the iterates, `time' represents the CPU time counting both Procedure \ref{proc:init} and Algorithm \ref{alg:main}, where the unit is second, 
and `rel.err' stands for the relative error between $U^{\rm out}_{j,i}$ generated by the algorithm and the true $U_{j,i}$ of $\mathcal A$. Due to permutation issues, `rel.err' is defined as
\[ \setlength\abovedisplayskip{3pt}
\setlength\abovedisplayshortskip{3pt}
\setlength\belowdisplayskip{3pt}
\setlength\belowdisplayshortskip{3pt}
{\rm rel.err} = \bignorm{(U_{j,i}) - (U^{\rm out}_{j,i}\cdot\Pi_{j,i})}_F/\bignorm{(U_{j,i}) }_F,
\]
where $\Pi_{j,i}=\arg\min_{\Pi\in\boldsymbol{\Pi}} \bignorm{U_{j,i} - U^{\rm out}_{j,i}\cdot \Pi}_F$ and $\boldsymbol{\Pi}$ denotes the set of permutation matrices; this follows \cite{sorensen2012canonical}. In the experiments, our first observation is that in all cases, the algorithm converges to a KKT point. From the tables, we observe that in terms of the iterates and CPU time, when $\epsilon_i\leq 10^{-6}$, the algorithm with a good initializer usually converges within a few iterates, demonstrating its efficient. The results also suggest us to choose a very small $\epsilon_i$. In terms of the relative error, the algorithm performs almost the same with different $\epsilon_i$, all of which are small, compared with the noise level. We do not present $t=4$ cases here, because from the structure of $\mathcal A$ and Procedure \ref{proc:init}, the initializer already yields a very high-quality solution to the problem, and $\epsilon$-ALS  usually stops within $2$ iterates. 

\begin{table}[htbp]
	\centering
	\caption{\label{tab:numer2}  Performance of   $\epsilon$-ALS with different $\epsilon_i$. $n_{j_1}<R\leq n_{j_2}$, $1\leq j_1\leq d-t, d-t+1\leq j_2\leq d$; $\beta=0.1$.}
	\setlength{\tabcolsep}{1.2mm}			
	\begin{mytabular}{ll|rrr|rrr|rrr|rrr}
		\toprule
		&     &  \multicolumn{3}{c}{$\epsilon_i=0$ }  &   \multicolumn{3}{c}{$\epsilon_i=10^{-8}$ }  &   \multicolumn{3}{c}{$\epsilon_i=10^{-6}$ } &   \multicolumn{3}{c}{$\epsilon_i=10^{-4}$ }  \\
		\toprule
		$t$      &      $(n_1,\ldots,n_4;)$   & \multicolumn{1}{l}{Iter} & \multicolumn{1}{l}{time} & \multicolumn{1}{l}{rel.err} & \multicolumn{1}{l}{Iter} & \multicolumn{1}{l}{time} & \multicolumn{1}{l}{rel.err}  & \multicolumn{1}{l}{Iter} & \multicolumn{1}{l}{time} & \multicolumn{1}{l}{rel.err} & \multicolumn{1}{l}{Iter} & \multicolumn{1}{l}{time} & \multicolumn{1}{l}{rel.err}  \\
		\toprule
		1     & (5,5,5,20;10) & 22.4  & 0.05  & 9.59E-02 & 22.4  & 0.05 & 9.59E-02  & 22.4  & 0.05 & 9.59E-02  & 27.2  & 0.06 & 9.59E-02  \\
		1     & (5,5,5,30;10) & 21.5  & 0.05  & 9.37E-02 & 21.5  & 0.05 & 9.37E-02  & 21.5  & 0.05 & 9.37E-02  & 25.8  & 0.05& 9.37E-02   \\
		1     & (10,10,10,40;20) & 40.4  & 0.16 & 9.48E-02  & 40.5  & 0.16 & 9.48E-02  & 42.3  & 0.17 & 9.49E-02  & 133.9 & 0.34 & 9.48E-02  \\
		\midrule
		2     & (5,5,30,30;20) & 19.5  & 0.07 & 1.01E-01  & 19.5 & 0.07  & 1.01E-01  & 20.3  & 0.07 & 1.01E-01  & 87.1  & 0.19& 1.01E-01   \\
		2     & (5,5,40,40;20) & 26.1  & 0.10 & 1.01E-01  & 26.1  & 0.09 & 1.01E-01  & 27.8  & 0.10& 1.01E-01   & 103.7  & 0.27  & 1.01E-01 \\
		2     & (10,10,40,40;30) & 28.2  & 0.22& 1.03E-01   & 28.3  & 0.22& 1.03E-01   & 36.0  & 0.25& 1.03E-01   & 304.2  & 1.45 & 1.02E-01  \\
		\midrule
		3     & (5,10,10,10;10) & 3.5   & 0.01& 3.11E-02   & 3.5   & 0.01 & 3.11E-02  & 3.5  & 0.01   & 3.11E-02 & 5.1   & 0.01 & 3.11E-02  \\
		3     & (5,20,20,20;10) & 11.2  & 0.03 & 7.66E-02  & 11.2  & 0.03& 7.66E-02   & 13.0  & 0.03& 7.58E-02   & 61.3  & 0.10  & 7.58E-02 \\
		3     & (5,40,40,40;20) & 35.7  & 0.26& 9.08E-02   & 35.5  & 0.26 & 9.08E-02  & 39.0  & 0.28& 8.79E-02   & 276.1  & 1.65& 8.54E-02   \\
		3     & (5,40,40,40;30) & 21.4  & 0.21 & 5.89E-02  & 19.1  & 0.19& 5.61E-02   & 25.4  & 0.24 & 5.62E-02  & 214.5  & 1.62 & 5.68E-02  \\
		3     & (10,20,20,20;15) & 5.9   & 0.03 & 3.62E-02  & 5.9   & 0.03 & 3.62E-02  & 6.5   & 0.03  & 3.62E-02 & 45.9  & 0.11 & 3.62E-02  \\
		\bottomrule
	\end{mytabular}%
\end{table}%

\begin{table}[htbp]
	\centering
	\caption{\label{tab:compare_proc_random}Comparisons of $\epsilon$-ALS initialized by Procedure \ref{proc:init} and by random initializers. $\epsilon_i=10^{-8}$; $\beta=0.1$.  }
	\begin{mytabular}{ll|rrr|rrr}
		\toprule
		&     &  \multicolumn{3}{c}{Procedure \ref{proc:init} }  &   \multicolumn{3}{c}{Random }   \\
		\toprule
		\multicolumn{1}{l}{$t$} &  $(n_1,\ldots,n_4;R)$      & \multicolumn{1}{l}{Iter} & \multicolumn{1}{l}{time} & \multicolumn{1}{l}{rel.err} & \multicolumn{1}{l}{Iter} & \multicolumn{1}{l}{time} & \multicolumn{1}{l}{rel.err} \\
		\toprule
		1     & (20,20,20,20;10) & \textbf{19.5} & \textbf{0.11} & \textbf{7.21E-02} & 161.6  & 0.39  & 4.69E-01 \\
		1     & (30,30,30,30;10) & \textbf{14.2} & \textbf{0.27} & \textbf{5.05E-02} & 114.7  & 0.97  & 5.04E-01 \\
		1     & (40,40,40,40;10) &\textbf{4.6} & \textbf{0.51} & \textbf{1.46E-02} & 146.8  & 3.70  & 5.36E-01 \\
		\midrule
		2     & (10,10,40,40;30) & \textbf{21.5} & 0.15  & \textbf{6.50E-02} & 32.7  & \textbf{0.14} & 8.08E-02 \\
		2     & (20,20,20,20;10) & \textbf{10.2} & 0.06  & \textbf{5.34E-02} & 18.8  & \textbf{0.04} & 7.65E-02 \\
		2     & (30,30,30,30;5) & \textbf{6.9} & 0.12  & \textbf{2.82E-02} & 14.5  & \textbf{0.11} & 3.73E-02 \\
		2     & (30,30,30,30;20) & \textbf{24.6} & \textbf{0.43} & \textbf{6.43E-02} & 52.4  & 0.64  & 8.81E-02 \\
		2     & (40,40,40,40;10) & \textbf{12.4} & \textbf{0.76} & \textbf{2.39E-02} & 48.0  & 1.39  & 7.61E-02 \\
		\midrule
		3     & (5,30,30,30;10) & \textbf{8.7} & \textbf{0.04} & \textbf{3.52E-02} & 15.4  & 0.05  & 4.72E-02 \\
		3     & (5,40,40,40;20) & \textbf{15.8} & \textbf{0.15} & \textbf{6.15E-02} & 26.9  & 0.18  & 8.23E-02 \\
		3     & (20,20,20,20;10) & \textbf{8.2} & \textbf{0.03} & \textbf{4.26E-02} & 15.1  & 0.04  & 6.22E-02 \\
		3     & (30,30,30,30;10) & \textbf{3.8} & \textbf{0.14} & \textbf{1.64E-02} & 15.0  & \textbf{0.14} & 3.15E-02 \\
		3     & (40,40,40,40;10) & \textbf{11.6} & 0.85  & \textbf{3.33E-02} & 31.3  & \textbf{0.83} & 7.71E-02 \\
		\bottomrule
	\end{mytabular}%
	\label{tab:addlabel}%
\end{table}%

We then compare $\epsilon$-ALS  initialized by Procedure \ref{proc:init} and by random initializers. We set $\epsilon_i=10^{-8}$. The results are reported in Table \ref{tab:compare_proc_random}, from which we   see that  the algorithm armed with Procedure \ref{proc:init} enjoys a better efficiency in most cases (here the time of Procedure \ref{proc:init} has also been counted); the iterates are much less than that with a random initializer when $t=1$. Considering the relative error, the former also performs much better. These results demonstrates the efficiency and effectiveness of $\epsilon$-ALS  initialized by Procedure \ref{proc:init}. 

\section{Conclusions}\label{sec:conclusions}
To overcome the global convergence issues raised in \cite{wang2015orthogonal} concerning ALS for solving orthogonal low rank tensor approximation problems where the columnwisely orthonormal factors are one or more than one, the   $\epsilon$-ALS is developed and analyzed in this work. The global convergence has been established for  {all tensors}  without any assumption. To accomplish the study, we also prove the global convergence of (unperturbed) ALS, if there exists a limit point in which the columnwise orthonormality factor matrices have full column rank; such an assumption makes sense and is not stronger than those in the literature. By combining the ideas of HOSVD and approximation solutions for tensor best rank-1 approximation, an initialization procedure is proposed, armed with which the   $\epsilon$-ALS exhibits a promising performance both in efficiency and effectiveness. Possible future work are: 1) to establish the linear convergence of the algorithm; 2) to give a theoretical lower bound of Procedure \ref{proc:init} when $t\geq 2$, or to design   procedures based on other types of HOSVD \cite{vannieuwenhoven2012new,grasedyck2010hierarchical}; 3) to automatically choose $R$
for the algorithm.

 {\scriptsize\section*{Acknowledgement}  This work was supported by NSFC Grant 11801100. }

   \bibliography{tensor,TensorCompletion,orth_tensor}
  \bibliographystyle{siam}

   \appendix

 \section{Auxiliary Results}\label{sec:app:kl}
 
   \begin{definition}[Semi-algebraic set  and function, see e.g., \cite{bochnak1998real} ]\label{def:semi_algebraic}
   A set $C \subset \mathbb R^n$ is called semi-algebraic, if there exists a finite number of real polynomials $p_{ij},q_{ij} : \mathbb R^n\rightarrow \mathbb R$, $1\leq i\leq s, 1\leq j\leq t$ such that
   \begin{small}
 \[
   C =  \bigcup_{i=1}^s\nolimits\bigcap_{j=1}^t\nolimits\{ \mathbf x \in \mathbb R^n ~|~  p_{ij}(\mathbf x)=0,~q_{ij}(\mathbf x) > 0  \}.
 \]
   \end{small}
   An extended real-valued function $F$ is said to be semi-algebraic if its graph
   \begin{displaymath}
   {\rm graph}\,F :=\left \{ (\mathbf x,t)\in \mathbb R^{n+1}~|~ F(\mathbf x)=t   \right\}
   \end{displaymath}
   is a semi-algebraic set.
   \end{definition}

\begin{proposition}\label{prop:semi_algebraic}(\cite[Example 2]{bolte2014proximal})
Stiefel manifolds are semi-algebraic sets. 
The following are examples of  semi-algebraic functions:
     \begin{enumerate}
     \item Real polynomial functions.
     \item Indicator functions of semi-algebraic sets.
     \item Finite sums, product and composition of semi-algebraic functions.
     \end{enumerate}
 
 \begin{lemma}(\cite[Theorem 3]{bolte2014proximal})
 	\label{lem:semi-algebraic-fun}
 Let $f(\cdot)$ be a proper and l.s.c. function. If $f(\cdot)$ is semi-algebraic, then it satisfies the KL property at any point of ${\rm dom}f$.
 \end{lemma}
 
\end{proposition}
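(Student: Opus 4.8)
The statement is the nonsmooth Kurdyka--\L{}ojasiewicz inequality for semi-algebraic functions, and the plan is to follow the route pioneered by \L{}ojasiewicz for the smooth case and extended to the subdifferential setting through the o-minimal / Tarski--Seidenberg machinery. Fix $\bar x\in{\rm dom}\,f$ and, after a harmless translation of values, assume $f(\bar x)=0$. It suffices to produce the desingularizing function $\psi$ of Definition \ref{def:kl} on a small ball $\mathbb{B}(\bar x,\epsilon)$ intersected with the strip $\{0<f<\eta\}$; indeed, on the complement of ${\rm dom}\,\partial f$ the quantity ${\rm dist}(0,\partial f(x))$ equals $+\infty$ and the defining inequality \eqref{eq:kl_property} is vacuous, so only points where $\partial f$ is nonempty carry content.

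The first step is to compress the problem into a single real-variable semi-algebraic question. I would introduce
\[
\mu(s) := \inf\bigl\{ {\rm dist}(0,\partial f(x)) : x\in\mathbb{B}(\bar x,\epsilon),\ f(x)=s \bigr\},\qquad s>0,
\]
and argue that $\mu$ is a semi-algebraic function of $s$. This rests on two observations. First, the graph of the limiting subdifferential $\partial f$ of a semi-algebraic l.s.c.\ function is itself semi-algebraic in $\mathbb R^n\times\mathbb R^n$: the Fr\'echet subdifferential $\hat\partial f$ is defined in \eqref{eq:f_subdiff} by a bounded first-order formula over semi-algebraic data, hence semi-algebraic by Tarski--Seidenberg quantifier elimination, and passing to the limiting closure of Definition \ref{def:subdifferential} preserves semi-algebraicity. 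Second, semi-algebraicity is stable under the distance map, under slicing by the level sets $\{f=s\}$, and under infimization over one block of variables (again Tarski--Seidenberg). Thus $s\mapsto\mu(s)$ is semi-algebraic.

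The heart of the argument is the structure theory of univariate semi-algebraic functions. By the monotonicity (Puiseux) theorem there is $\eta>0$ such that on $(0,\eta)$ the function $\mu$ is continuous and monotone and admits an expansion $\mu(s)=c\,s^{\alpha}(1+o(1))$ with $c>0$ and a rational exponent $\alpha\ge0$; the degenerate cases $\mu\equiv+\infty$ or $\mu$ bounded away from $0$ are handled by a linear $\psi$. The decisive point, which I expect to be the main obstacle, is to show the \L{}ojasiewicz exponent satisfies $\alpha<1$, equivalently that $\int_0^\eta \mu(\tau)^{-1}\,d\tau<\infty$. This does not follow from soft semi-algebraicity alone: it is the genuine content of Kurdyka's theorem, proved by combining the semi-algebraic Sard theorem (so that $0$ is isolated among the critical values of $f$ on $(0,\eta)$) with a curve-selection / ``talweg'' estimate that controls the length of steepest-descent curves through the strip and forces the integral to converge.

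Granting $\alpha<1$, I would set $\psi(t):=\int_0^t \mu(\tau)^{-1}\,d\tau$. Then $\psi(0)=0$, $\psi$ is finite and continuous on $[0,\eta)$, and $\psi'(s)=\mu(s)^{-1}>0$; moreover $\psi$ is concave because $\mu$ is increasing near $0$ (a monotone positive function with $\mu(0^+)=0$ is increasing), so $\psi'$ is decreasing. Finally, for any $x\in\mathbb{B}(\bar x,\epsilon)$ with $0<f(x)<\eta$,
\[
\psi'\bigl(f(x)\bigr)\,{\rm dist}(0,\partial f(x)) = \frac{{\rm dist}(0,\partial f(x))}{\mu(f(x))} \ge 1
\]
directly from the definition of $\mu$ as an infimum over the level set $\{f=f(x)\}$. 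This is exactly \eqref{eq:kl_property}, so $f$ has the KL property at $\bar x$; since $\bar x\in{\rm dom}\,f$ was arbitrary, $f$ is a KL function.
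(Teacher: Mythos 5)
The first thing to say is that the paper does not prove this statement at all: Proposition \ref{prop:semi_algebraic} and Lemma \ref{lem:semi-algebraic-fun} are quoted verbatim from \cite[Example 2]{bolte2014proximal} and \cite[Theorem 3]{bolte2014proximal}, and the appendix offers no argument. So there is no paper proof to compare against; the question is whether your blind reconstruction stands on its own. Its architecture is the correct one from the literature (Kurdyka's o-minimal argument and its nonsmooth extension by Bolte--Daniilidis--Lewis): reduce to the one-variable semi-algebraic function $\mu(s)=\inf\{{\rm dist}(0,\partial f(x)): x\in\mathbb B(\bar x,\epsilon),\, f(x)=s\}$, invoke Tarski--Seidenberg to see that the graph of $\partial f$ and hence $\mu$ are semi-algebraic, apply the monotonicity/Puiseux theorem, and set $\psi(t)=\int_0^t\mu(\tau)^{-1}d\tau$. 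The final display, $\psi'(f(x))\,{\rm dist}(0,\partial f(x))\ge 1$ by definition of the infimum, and the concavity argument ($\mu$ positive, monotone, $\mu(0^+)=0$ forces $\mu$ nondecreasing, so $\psi'$ is nonincreasing) are both correct.

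The genuine gap is exactly where you flag it, and flagging it does not close it: the claim that the Puiseux exponent satisfies $\alpha<1$, equivalently $\int_0^\eta\mu(\tau)^{-1}d\tau<\infty$, is the entire content of the theorem, and your proposal outsources it to ``Kurdyka's theorem'' via ``semi-algebraic Sard plus a talweg estimate.'' As stated this appeal is insufficient even as a citation, because Kurdyka's original theorem concerns the gradient of a $C^1$ definable function, while here $\mu$ is built from the limiting subdifferential of a merely l.s.c.\ function; bridging that gap is precisely the contribution of Bolte--Daniilidis--Lewis(--Shiota), who use a Whitney stratification of $f$ together with a projection formula expressing ${\rm dist}(0,\partial f(x))$ in terms of the Riemannian gradient of $f$ restricted to the stratum through $x$, and only then reduce to the smooth stratum-wise inequality. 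Without that stratification step your talweg bound has nothing to act on. Two smaller points: the semi-algebraic Sard theorem is needed \emph{earlier} than you use it, namely to guarantee $\mu>0$ on $(0,\eta)$ at all (a critical point in the ball with value $s\in(0,\eta)$ gives $\mu(s)=0$, and the closedness of $\partial f$ under $f(x_k)\to f(x^*)$ must be handled when the infimum is not attained); and your proposal addresses only Lemma \ref{lem:semi-algebraic-fun}, leaving the easy first half of the statement untouched --- Stiefel manifolds are semi-algebraic because $U^\top U=I$ is a finite system of polynomial equations, and the closure properties of semi-algebraic functions under sums, products, composition, and indicators follow from quantifier elimination. Given that the paper itself treats these facts as citable black boxes, the defensible options are either to cite them as the paper does or to give the full stratification argument; the proposal sits unstably in between.
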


  \begin{lemma}\label{lem:app1}
 For any integer   $d$ and any unit length vector $ \mathbf x_{j},\mathbf y_j$, there holds
 \begin{small}
 \[
\bignorm{\bigotimesx - \bigotimesy} \leq  \sum^d_{i=1}\nolimits  \|  \mathbf x_j - \mathbf y_j\|  . 
 \]
 \end{small}
  \end{lemma}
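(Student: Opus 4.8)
The plan is to prove the inequality by a standard telescoping argument, swapping the factors one at a time from the $\mathbf y$-tuple to the $\mathbf x$-tuple. First I would introduce the hybrid rank-1 tensors
\[
\mathbf z_k := \mathbf x_1\otimes\cdots\otimes\mathbf x_k\otimes\mathbf y_{k+1}\otimes\cdots\otimes\mathbf y_d,\quad 0\le k\le d,
\]
so that $\mathbf z_0 = \bigotimesy$ and $\mathbf z_d = \bigotimesx$. Writing the difference as a telescoping sum,
\[
\bigotimesx - \bigotimesy = \mathbf z_d - \mathbf z_0 = \sum_{k=1}^d (\mathbf z_k - \mathbf z_{k-1}),
\]
and applying the triangle inequality for the Frobenius norm reduces the task to bounding each individual term $\bignorm{\mathbf z_k - \mathbf z_{k-1}}$.

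Next I would observe that consecutive hybrids differ in exactly one slot:
\[
\mathbf z_k - \mathbf z_{k-1} = \mathbf x_1\otimes\cdots\otimes\mathbf x_{k-1}\otimes(\mathbf x_k - \mathbf y_k)\otimes\mathbf y_{k+1}\otimes\cdots\otimes\mathbf y_d.
\]
Since the Frobenius norm of an outer product of vectors factorizes as the product of the individual Euclidean norms, and since every $\mathbf x_i$ and $\mathbf y_i$ is a unit vector by hypothesis, it follows that
\[
\bignorm{\mathbf z_k - \mathbf z_{k-1}} = \Bigl(\prod_{i<k}\bignorm{\mathbf x_i}\Bigr)\bignorm{\mathbf x_k - \mathbf y_k}\Bigl(\prod_{i>k}\bignorm{\mathbf y_i}\Bigr) = \bignorm{\mathbf x_k - \mathbf y_k}.
\]
Substituting this into the telescoping bound yields $\bignorm{\bigotimesx - \bigotimesy}\le\sum_{k=1}^d\bignorm{\mathbf x_k - \mathbf y_k}$, which is the claim.

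There is no genuine obstacle here: the result is elementary, and the only two facts invoked are the triangle inequality and the multiplicativity of the Frobenius norm over tensor (outer) products. If anything deserves care, it is the bookkeeping of the telescoping indices and the verification that the multiplicative norm identity, together with the unit-length assumption, collapses each product of factor norms to $1$; these steps are routine. The same argument applies verbatim for every integer $d\ge 1$, the $d=1$ case being the trivial identity.
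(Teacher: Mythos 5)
Your proposal is correct and is essentially the same argument as the paper's: the paper proves the bound by induction on $d$, inserting one hybrid term $\bigotimes^m_{j=1}\mathbf x_j\otimes\mathbf y_{m+1}$ per step and using the triangle inequality together with the fact that the norm of a rank-1 tensor factorizes over unit vectors, which is exactly your telescoping decomposition written recursively. No gap; the two write-ups differ only in whether the telescoping is unrolled explicitly or handled by the induction hypothesis.
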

  \begin{proof}
      We apply the induction method on $d$. When $d=1$, the above inequality is true. Suppose it holds when $d=m$. When $d=m+1$,
      \begin{small}
      \begin{eqnarray*}
       \bignorm{  \bigotimes^{m+1}_{j=1}\nolimits \mathbf x_j -  \bigotimes^{m+1}_{j=1}\nolimits\mathbf y_j   }    
   &\leq & \bignorm{ \bigotimes^{m+1}_{j=1}\nolimits\mathbf x_j -  \bigotimes^m_{j=1}\nolimits\mathbf x_j\otimes \mathbf y_{m+1}    } + \bignorm{  \bigotimes^m_{j=1}\nolimits\mathbf x_j \otimes \mathbf y_{m+1}   -\bigotimes^{m+1}_{j=1}\nolimits\mathbf y_j   }     \\
       &= & \bignorm{ \mathbf x_{m+1} - \mathbf y_{m+1} } + \bignorm{ \bigotimes^m_{j=1}\nolimits\mathbf x_j -\bigotimes^m_{j=1}\nolimits\mathbf y_j  } \\
&\leq& \sum^{m+1}_{j=1}\nolimits\bignorm{ \mathbf x_j - \mathbf y_j },
      \end{eqnarray*}
  \end{small}
  where the equality follows from $\bignorm{ \mathbf x\otimes ( \mathbf y-\mathbf z)  } = \bignorm{ \mathbf y-\mathbf z}$ for $\|\mathbf x\|=1$. This completes the proof.
  \end{proof}

\end{document}